\numberwithin{equation}{section}
\newtheoremstyle{thmlemcorr}{10pt}{10pt}{\itshape}{}{\bfseries}{.}{10pt}{{\thmname{#1}\thmnumber{ #2}\thmnote{ (#3)}}}
\newtheoremstyle{thmlemcorr*}{10pt}{10pt}{\itshape}{}{\bfseries}{.}\newline{{\thmname{#1}\thmnumber{ #2}\thmnote{ (#3)}}}
\newtheoremstyle{defi}{10pt}{10pt}{\itshape}{}{\bfseries}{.}{10pt}{{\thmname{#1}\thmnumber{ #2}\thmnote{ (#3)}}}
\newtheoremstyle{remexample}{10pt}{10pt}{}{}{\bfseries}{.}{10pt}{{\thmname{#1}\thmnumber{ #2}\thmnote{ (#3)}}}
\newtheoremstyle{ass}{10pt}{10pt}{}{}{\bfseries}{.}{10pt}{{\thmname{#1}\thmnumber{ A#2}\thmnote{ (#3)}}}
\theoremstyle{thmlemcorr}
\newtheorem{theorem}{Theorem}
\numberwithin{theorem}{section}
\newtheorem{lemma}[theorem]{Lemma}
\newtheorem{corollary}[theorem]{Corollary}
\newtheorem{proposition}[theorem]{Proposition}
\theoremstyle{thmlemcorr*}
\newtheorem{theorem*}{Theorem}
\newtheorem{lemma*}[theorem]{Lemma}
\newtheorem{corollary*}[theorem]{Corollary}
\newtheorem{proposition*}[theorem]{Proposition}
\newtheorem{problem*}[theorem]{Problem}
\newtheorem{conjecture*}[theorem]{Conjecture}
\theoremstyle{defi}
\theoremstyle{remexample}
\newtheorem{remark}[theorem]{Remark}
\theoremstyle{ass}
\newcommand{\Crm}{\mathrm{C}}
\newcommand{\Lrm}{\mathrm{L}}
\newcommand{\Wrm}{\mathrm{W}}
\newcommand{\Ecal}{\mathcal{E}}
\newcommand{\Hcal}{\mathcal{H}}
\newcommand{\Lcal}{\mathcal{L}}
\newcommand{\Mcal}{\mathcal{M}}
\newcommand{\Ebf}{\mathbf{E}}
\newcommand{\Ybf}{\mathbf{Y}}
\renewcommand{\Bbb}{\mathbb{B}}
\newcommand{\Sbb}{\mathbb{S}}
\DeclareMathOperator{\id}{id}
\DeclareMathOperator{\supmod}{sup}
\DeclareMathOperator{\supp}{supp}
\newcommand{\ee}{\mathrm{e}}
\newcommand{\setn}[2]{\{\, #1 \ \ \textup{\textbf{:}}\ \ #2 \,\}}
\newcommand{\setb}[2]{\bigl\{\, #1 \ \ \textup{\textbf{:}}\ \ #2 \,\bigr\}}
\newcommand{\setBB}[2]{\biggl\{\, #1 \ \ \textup{\textbf{:}}\ \ #2 \,\biggr\}}
\newcommand{\norm}[1]{\|#1\|}
\newcommand{\abs}[1]{|#1|}
\newcommand{\absn}[1]{|#1|}
\newcommand{\absb}[1]{\bigl|#1\bigr|}
\newcommand{\absBB}[1]{\biggl|#1\biggr|}
\newcommand{\dpr}[1]{\langle #1 \rangle}	
\newcommand{\dprn}[1]{\langle #1 \rangle}
\newcommand{\dprb}[1]{\bigl\langle #1 \bigr\rangle}
\newcommand{\ddpr}[1]{\langle\!\langle #1 \rangle\!\rangle}
\newcommand{\ddprn}[1]{\langle\!\langle #1 \rangle\!\rangle}
\newcommand{\ddprb}[1]{\bigl\langle\hspace{-2.5pt}\bigl\langle #1 \bigr\rangle\hspace{-2.5pt}\bigr\rangle}
\newcommand{\ddprBB}[1]{\biggl\langle\!\!\!\biggl\langle #1 \biggr\rangle\!\!\!\biggr\rangle}
\newcommand{\cl}[1]{\overline{#1}}
\newcommand{\di}{\mathrm{d}}
\newcommand{\dd}{\;\mathrm{d}}
\newcommand{\N}{\mathbb{N}}
\newcommand{\R}{\mathbb{R}}
\newcommand{\sym}{\mathrm{sym}}
\newcommand{\skw}{\mathrm{skew}}
\newcommand{\reg}{\mathrm{reg}}
\newcommand{\sing}{\mathrm{sing}}
\newcommand{\ONE}{\mathbbm{1}}
\newcommand{\toweakstar}{\overset{*}\rightharpoonup}
\newcommand{\todown}{\downarrow}
\newcommand{\conv}{\star}
\newcommand{\sbullet}{\begin{picture}(1,1)(-0.5,-2)\circle*{2}\end{picture}}
\newcommand{\frarg}{\,\sbullet\,}
\newcommand{\BV}{\mathrm{BV}}
\newcommand{\BD}{\mathrm{BD}}
\newcommand{\BDY}{\mathbf{BDY}}
\newcommand{\toY}{\overset{\Ybf}{\to}}
\newcommand{\eps}{\epsilon}
\DeclareMathOperator{\Tan}{Tan}
\newcommand{\term}[1]{\textbf{#1}}
\newcommand{\proofstep}[1]{\textit{#1}}
\newcommand{\Rdds}{\R^{d \times d}_\sym}
\newcommand{\Bdds}{\Bbb^{d \times d}_\sym}
\def\Xint#1{\mathchoice 
{\XXint\displaystyle\textstyle{#1}}%
{\XXint\textstyle\scriptstyle{#1}}%
{\XXint\scriptstyle\scriptscriptstyle{#1}}%
{\XXint\scriptscriptstyle\scriptscriptstyle{#1}}%
\!\int} 
\def\XXint#1#2#3{{\setbox0=\hbox{$#1{#2#3}{\int}$} 
\vcenter{\hbox{$#2#3$}}\kern-.5\wd0}} 
\def\dashint{\,\Xint-}
\newcommand{\restrict}{\begin{picture}(10,8)\put(2,0){\line(0,1){7}}\put(1.8,0){\line(1,0){7}}\end{picture}}
\renewcommand{\eps}{\varepsilon}
\renewcommand{\phi}{\varphi}
\title[Characterization of BD-Young measures]{Characterization of generalized Young measures\\ generated by symmetric gradients}
\author[G.~De Philippis]{Guido De Philippis}
\address{\textit{G.~De Philippis:} SISSA, Via Bonomea 265, 34136 Trieste, Italy.}
\email{guido.dephilippis@sissa.it}
\author[F.~Rindler]{Filip Rindler}
\address{\textit{F.~Rindler:} Mathematics Institute, University of Warwick, Coventry CV4 7AL, UK.}
\email{F.Rindler@warwick.ac.uk}
\begin{document}

\maketitle


\begin{abstract}
This work establishes a characterization theorem for (generalized) Young measures generated by symmetric derivatives of functions of bounded deformation (BD) in the spirit of the classical Kinderlehrer--Pedregal theorem. Our result places such Young measures in duality with symmetric-quasiconvex functions with linear growth. The ``local'' proof strategy combines blow-up arguments with the singular structure theorem in BD (the analogue of Alberti's rank-one theorem in BV), which was recently proved by the authors. As an application of our characterization theorem we show how an atomic part in a BD-Young measure can be split off in generating sequences.
\vspace{4pt}

\noindent\textsc{MSC (2010): 49J45 (primary); 28B05, 46G10} 

\noindent\textsc{Keywords:} Young measure characterization, BD, symmetric-quasiconvexity.

\vspace{4pt}

\noindent\textsc{Date:} \today{}. 
\end{abstract}


\section{Introduction}

Young measures quantitatively describe the asymptotic oscillations in $\Lrm^p$-weakly converging sequences. They were introduced in~\cite{Young37,Young42a,Young42b} and later developed into an important tool in modern PDE theory and the calculus of variations in~\cite{Tartar79,Tartar83,Ball89,BallJames87} and many other works. In order to deal with concentration effects as well, DiPerna \& Majda extended the framework to so-called \enquote{generalized} Young measures, see~\cite{DiPernaMajda87,AlibertBouchitte97,KruzikRoubicek97,FonsecaMullerPedregal98,Sychev99,KristensenRindler10YM}. In the following we will refer also to these objects simply as \enquote{Young measures}.

When considering generating sequences that satisfy a differential constraint like curl-freeness (i.e.\ the generating sequence is a sequence of \emph{gradients}), the question immediately arises to characterize the resulting class of Young measures. In applications, these results provide very valuable information on the allowed oscillations and concentrations that are possible under this differential constraint, which usually constitutes a strong restriction.

The first general classification results are due to Kinderlehrer \& Pedregal~\cite{KinderlehrerPedregal91,KinderlehrerPedregal94}, who characterized classical \emph{gradient} Young measures, i.e.\ those generated by gradients of $\Wrm^{1,p}$-bounded sequences, $1 < p \leq \infty$. Their theorems put such gradient Young measures in duality with quasiconvex functions as introduced by Morrey~\cite{Morrey52}. For generalized Young measures the corresponding result was proved in~\cite{FonsecaMullerPedregal98} (also see~\cite{KalamajskaKruzik08}) and numerous other characterization results in the spirit of the Kinderlehrer--Pedregal theorems have since appeared, see for instance~\cite{KruzikRoubicek96,FonsecaMuller99,FonsecaKruzik10,BenesovaKruzik16}.

Characterization theorems are of particular use in the relaxation of minimization problems for non-convex integral functionals, where one passes from a functional defined on functions to one defined on Young measures. A Kinderlehrer--Pedregal-type theorem allows to restrict the class of Young measures over which to minimize. This is explained in detail (for classical Young measures) in~\cite{Pedregal97book}. A similar application is possible for generalized Young measures.

The characterization of generalized BV-Young measures was first achieved in~\cite{KristensenRindler10YM}. A different, \enquote{local} proof was given in~\cite{Rindler14YM}, another improvement is in~\cite[Theorem~6.2]{KirchheimKristensen16}. All of these arguments crucially use Alberti's rank-one theorem~\cite{Alberti93} (see~\cite{MassaccesiVittone16} for a short and elegant new proof) and thus, since this theorem is specific to BV, extensions to further BV-like spaces have been prohibited so far. The only partial result for a characterization beyond BV seems to be in~\cite{BaiaMatiasSantos13}, but that result is limited to first-order operators (which does not cover BD) and also additional technical conditions have to be assumed.

We now explain briefly the framework underlying this work and introduce some notation to state our main result; precise definitions are given in Section~\ref{sc:setup}. Given an $\Lrm^1$-bounded sequence of maps $v_j \colon \Omega \to \R^N$ ($\Omega \subset \R^d$), the Fundamental Theorem of (generalized) Young measure theory states that there exists a subsequence of the $v_j$'s (which we do not relabel) such that for all continuous $f \colon \cl{\Omega} \times \R^N \to \R$ with the property that the \term{recession function}
\[
  f^\infty(x,A) := \lim_{\substack{\!\!\!\! x' \to x \\ \!\!\!\! A' \to A \\ \; t \to \infty}}
    \frac{f(x',tA')}{t},  \qquad x \in \cl{\Omega}, \, A \in \R^N,
\]
exists, it holds that
\[
  \int f(x,v_j(x)) \dd x \to \ddprb{f,\nu}
  := \int_\Omega \dprb{f(x,\frarg), \nu_x} \dd x
    + \int_{\cl{\Omega}} \dprb{f^\infty(x,\frarg),\nu_x^\infty} \dd \lambda_\nu(x),
\]
where $(\nu_x)_{x \in \Omega}$, $(\nu_x^\infty)_{x \in \cl{\Omega}}$ are parametrized families of probability measures on $\R^N$ and $\partial \Bbb^N$ (the unit sphere in $\R^N$), respectively, and $\lambda_\nu$ is a positive, finite Borel measure on $\cl{\Omega}$. Together, we call $\nu = (\nu_x,\lambda_\nu,\nu_x^\infty)$ the \emph{(generalized) Young measure} generated by the (subsequence of the) $v_j$'s.

In plasticity theory~\cite{Suquet78,Suquet79,TemamStrang80}, one often deals with sequences of uniformly $\Lrm^1$-bounded \emph{symmetric gradients}
\[
  \Ecal u_j := \frac{1}{2} \bigl( \nabla u_j + \nabla u_j^T \bigr).
\]
It is an important problem to characterize the (generalized) Young measures $\nu$ generated by such sequences $(\Ecal u_j)$. We call such $\nu$ \emph{BD-Young measures} and write $\nu \in \BDY(\Omega)$, since all BD-functions~\cite{Suquet78,Suquet79,TemamStrang80} can be reached as weak* limits of sequences $(u_j)$ as above. Recall that a function $u \in \Lrm^1(\Omega;\R^d)$ lies in the space $\BD(\Omega)$ of functions of bounded deformation if its distributional symmetrized derivative $Eu$ is a bounded Radon measure on $\Omega$ taking values in $\Rdds$.

Our main result is the following:

\begin{theorem} \label{thm:BDY_charact}
Let $\nu \in \Ybf(\Omega;\Rdds)$ be a (generalized) Young measure. Then, $\nu$ is a BD-Young measure, $\nu \in \BDY(\Omega)$, if and only if there exists $u \in \BD(\Omega)$ with $[\nu] = Eu$ and for all symmetric-quasiconvex $h \in \Crm(\R_\sym^{d \times d})$ with linear growth at infinity, the Jensen-type inequality
\[
  h \biggl( \dprb{\id,\nu_x} + \dprb{\id,\nu_x^\infty} \frac{\di \lambda_\nu}{\di \Lcal^d}(x) \biggr)
    \leq \dprb{h,\nu_x} + \dprb{h^\#,\nu_x^\infty} \frac{\di \lambda_\nu}{\di \Lcal^d}(x).
\]
holds at $\Lcal^d$-almost every $x \in \Omega$.
\end{theorem}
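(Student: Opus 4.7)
For necessity ($\Rightarrow$), I would use a blow-up localization argument. Take a generating sequence $\Ecal u_j \toY \nu$. The identity $[\nu] = Eu$ for some $u \in \BD(\Omega)$ follows because the barycenter of a generalized Young measure agrees with the weak* limit of its generating sequence, which here is the distributional symmetric gradient $Eu$. For the Jensen inequality, fix a symmetric-quasiconvex $h$ with linear growth at infinity and work at a Lebesgue point $x_0 \in \Omega$ of all the relevant quantities. Rescale $u_j$ on a small ball $B_r(x_0)$ to the unit cube, modify the rescaled sequence by a cutoff so that on the boundary of the cube it agrees with the linear map of slope $\Xi := \dprb{\id,\nu_{x_0}} + \dprb{\id,\nu_{x_0}^\infty}\frac{\di \lambda_\nu}{\di \Lcal^d}(x_0)$, and then apply symmetric-quasiconvexity. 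Passing to the limit first in $j$ and then letting $r \to 0$, while using the recession $h^\#$ to account for the concentrations left behind, yields the Jensen inequality at $x_0$.

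For sufficiency ($\Leftarrow$), the plan adapts the \enquote{local} scheme of \cite{Rindler14YM} to the BD setting. The steps are: (a) reduce the problem to generating the blown-up \emph{tangent} Young measures at $\Lcal^d$-a.e.\ point of $\Omega$ and at $\lambda_\nu^s$-a.e.\ singular point; these tangents inherit the Jensen-type inequality and are homogeneous in space. (b) Classify the admissible tangents: on the absolutely continuous part, the tangent has a constant (affine) barycenter, while on the singular part the BD singular structure theorem proved by the authors (the analogue of Alberti's rank-one theorem) forces the tangent barycenter to be of the form $(a \odot b)\,\mu$ for some symmetric rank-one tensor $a \odot b$ and scalar measure $\mu$. (c) Construct explicit generating sequences of symmetric gradients for each admissible tangent Young measure: Kinderlehrer--Pedregal-type constructions in the absolutely continuous regime, and one-dimensional laminate/concentration constructions along direction $b$ with amplitude $a$ in the singular regime, matching the oscillation and concentration profiles $\nu_{x_0}$, $\nu_{x_0}^\infty$, and $\lambda_\nu$. (d) Reassemble via a Besicovitch-type covering argument into a global generating sequence $\Ecal u_j \toY \nu$. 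The Jensen-type inequality functions as the separating hypothesis in a Hahn--Banach argument that identifies $\nu$ as a limit of BD-Young measures.

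The main obstacle will be step (c) at singular points: one must exhibit symmetric-gradient sequences that exactly generate a prescribed homogeneous Young measure with atomic concentration along a prescribed symmetric rank-one direction, simultaneously reproducing $\nu_{x_0}^\infty$ and the singular mass of $\lambda_\nu$. Here the BD singular structure theorem is indispensable, both to predict the structure of the tangent \emph{a priori} and to match the generating sequences to it. A secondary technical difficulty is the reassembly step (d): multiplying a BD-function by a cutoff $\phi$ introduces extra symmetric-gradient terms of the form $\sym(u \otimes \nabla\phi)$, which must be controlled in $\Lrm^1$ uniformly in $j$ so that they do not pollute the Young measure in the limit.
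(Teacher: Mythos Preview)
Your overall architecture is essentially the paper's: blow up to tangent Young measures, characterize the tangents as BD-generated via a Hahn--Banach separation, then glue. The necessity direction is fine (the paper simply cites Theorem~4 of~\cite{Rindler11}, which is exactly the blow-up argument you sketch). But there is a real gap in your sufficiency plan at the singular points.

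The Jensen-type hypothesis in the theorem is stated only for $\Lcal^d$-almost every $x$, so at $\lambda_\nu^s$-almost every singular point it gives you \emph{nothing}. Your step~(c) says you will use the Jensen inequality as the separating hypothesis in Hahn--Banach, but at a singular blow-up there is no Jensen inequality available to separate with. The paper resolves this by invoking the Kirchheim--Kristensen theorem~\cite{KirchheimKristensen16}: any positively $1$-homogeneous symmetric-quasiconvex function is actually \emph{convex} at every matrix of the form $a \odot b$, so the singular Jensen inequality $h^\#(\dpr{\id,\nu_{x_0}^\infty}) \leq \dpr{h^\#,\nu_{x_0}^\infty}$ holds automatically once the barycenter is known to be $a \odot b$ (which is exactly what the BD singular structure theorem provides). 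This is why the paper can prove the much stronger statement $\Ybf^\sing_0(a \odot b,\xi) = \BDY^\sing_0(a \odot b,\xi)$ with no Jensen hypothesis at all (Proposition~\ref{prop:local_sing}). Without this ingredient your plan would only prove a weaker theorem that also assumes a singular Jensen inequality.

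Two smaller divergences. First, the paper needs more than the raw rank-one structure $a \odot b$ at singular points: it needs the tangent Young measure to be \emph{one-directional} (depending only on $x \cdot \xi$ for some $\xi \in \{a,b\}$), because the ``artificial concentration'' construction that feeds the Hahn--Banach argument compresses in a single direction. This requires an iterated blow-up (Lemma~\ref{lem:very_good_blowups}: blow-ups of blow-ups are blow-ups) to upgrade the two-directional form $g_1(x\cdot a)b + g_2(x\cdot b)a$ coming from rigidity to a genuinely one-directional one. Your plan does not flag this. Second, your reassembly step~(d) via cutoffs and controlling $\sym(u \otimes \nabla\phi)$ is not how the paper glues: instead it matches traces on the boundaries of the covering cubes using strict continuity of the BD trace operator, so that the only singular contribution from glueing is a boundary jump term that is shown to vanish asymptotically. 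Your cutoff route may be workable, but it is a different technical path.
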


Here, the \emph{generalized recession function} $h^\# \colon \R^N \to \R$ of a map $h \colon \R^N \to \R$ with \emph{linear growth at infinity}, i.e.\ $\abs{h(A)} \leq C(1+\abs{A})$ for some constant $C > 0$, is given as
\[
  h^\#(A) := \limsup_{\substack{\!\!\!\! A' \to A \\ \; t \to \infty}} \,
  \frac{h(tA')}{t},  \qquad A \in \R^N.
\]
We remark that the use of the generalized recession function can in general not be avoided since not every quasiconvex function with linear growth at infinity has a (strong) recession function (and one needs to test with all those, but see~\cite[Theorem~6.2]{KirchheimKristensen16}). Further, a bounded Borel function $f \colon \R_\sym^{d \times d} \to \R$ is called \emph{symmetric-quasiconvex} if with $\Ecal \psi := (\nabla \psi + \nabla \psi^T)/2$,
\[
  f(A) \leq \dashint_D f(A + \Ecal \psi(y)) \dd y
  \qquad \text{for all $\psi \in \Wrm^{1,\infty}_0(D;\R^d)$ and all $A \in \R_\sym^{d \times d}$.}
\]

For a suitable integrand $f \colon \Omega \times \Rdds \to \R$, the minimum principle
\begin{equation} \label{eq:minprinc_ext}
  \ddpr{f,\nu} \to \min,  \quad\text{$\nu \in \BDY(\Omega)$.}
\end{equation}
can be seen as the \emph{extension-relaxation} of the minimum principle 
\begin{equation} \label{eq:minprinc_orig}
  \int_\Omega f(x,\Ecal u(x)) \dd x + \int_\Omega f^\infty \biggl(x,\frac{\di E^s u}{\di \abs{E^s u}}(x)\biggr) \dd \abs{E^s u} \to \min,
  \quad\text{$u \in \BD(\Omega)$.}
\end{equation}
The point is that~\eqref{eq:minprinc_orig} may not be solvable if $f$ is not symmetric-quasiconvex, whereas~\eqref{eq:minprinc_ext} always has a solution. In this situation, our main Theorem~\ref{thm:BDY_charact} then gives (abstract) restrictions on the Young measures to be considered in~\eqref{eq:minprinc_ext}. Another type of relaxation involving the symmetric-quasiconvex envelope of $f$ is investigated in~\cite{ArroyoRabasaDePhilippisRindler17?} within the framework of general linear PDE side-constraints.

Our proof of Theorem~\ref{thm:BDY_charact} roughly follows the \enquote{local} strategy developed in~\cite{Rindler14YM} for the characterization of BV-Young measures. The necessity part follows from a lower semicontinuity theorem, in this case the BD-lower semicontinuity result from~\cite{Rindler11}, as usual. For the sufficiency part, we first characterize \enquote{special} Young measures that can be generated by sequences in BD, see Section~\ref{sc:local}. These \enquote{special} Young measures originate from a blow-up procedure and are called \emph{tangent Young measures}. There are two types: regular and singular tangent Young measures, depending on whether regular (Lebesgue measure-like) effects or singular effects dominate around the blow-up point.

For the regular tangent Young measures the classical methods of Kinderlehrer \& Pedregal~\cite{KinderlehrerPedregal91,KinderlehrerPedregal94} are applicable. In order to deal with singular tangent measures, we first need to strengthen the result on \enquote{good blow-ups} for Young measures with a BD-barycenter from~\cite{Rindler11}, see Lemma~\ref{lem:very_good_blowups}, which is also interesting in its own right. We combine this lemma with the analogue of Alberti's rank-one theorem in BD from~\cite{DePhilippisRindler16}, which imposes strong constraints on the underlying BD-deformation (discussed in Remark~\ref{rem:BDpolar_necessary}). Glueing tangent BD-Young measures together, see Lemma~\ref{lem:glueing}, we then prove Theorem~\ref{thm:BDY_charact} in Section~\ref{sc:proof}.

We stress that our argument crucially rests on the BD-analogue of Alberti's rank-one theorem recently proved by the authors in~\cite{DePhilippisRindler16}, see Theorem~\ref{thm:BDpolar}. The reason is that this result explains the local structure of singularities that can occur in BD-functions (more precisely, in the singular part of the symmetric derivative). A weaker version of this argument was already pivotal in the work~\cite{Rindler11}. However, to prove Theorem~\ref{thm:BDY_charact}, the strong version of~\cite{DePhilippisRindler16} is needed. Technically, in one of the proof steps to establish Theorem~\ref{thm:BDY_charact} we need to create \enquote{artificial concentrations} by compressing symmetric gradients in one direction. This is only possible if we know precisely what these singularities look like, see Lemma~\ref{lem:artconc} and also Remark~\ref{rem:BDpolar_necessary} for details. 
It is not clear to us if the use of Theorem~\ref{thm:BDpolar} can be avoided to prove Theorem~\ref{thm:BDY_charact}

As another very useful technical tool, we utilize the BD-analogue of the surprising observation by Kirchheim \& Kristensen~\cite{KirchheimKristensen16} that the singular part of a BV-Young measure is \emph{unconstrained}. Without this observation, a weaker characterization result could be established, where, however  a second, \emph{singular} Jensen-type inequality needs to be required. Indeed, it was shown in Theorem~4 of~\cite{Rindler11} that in the situation of our theorem automatically also the \emph{singular} Jensen inequality 
\[
  h^\# \bigl( \dprb{\id,\nu_x^\infty} \bigr) \leq \dprb{h^\#,\nu_x^\infty}  \qquad
  \text{for $\lambda_\nu^s$-almost every $x \in \Omega$}
\]
holds. It is remarkable (and due to the observations in~\cite{KirchheimKristensen16} alluded to above) that this is, however, not needed to prove the characterization result.

The third central new ingredient in the proof is an argument yielding \enquote{very good} blow-ups at singular points, which are not only two-dimensional, but even one-dimensional (plus an affine part). This is achieved by iterating the blow-up construction (using the observation that  \enquote{blow-ups of blow-ups are blow-ups}); see Lemma~\ref{lem:very_good_blowups} for details.

As a (technical) application of Theorem~\ref{thm:BDY_charact}, we show how our result can be used to split off an atomic part from a BD-Young measure in generating sequences, see Theorem~\ref{thm:split}.

\subsection*{Acknowledgments} 

We would  like to warmly thank the referees for their careful reading of the manuscript and many helpful suggestions, which have led to the improvement of the manuscript. G.~D.~P. is supported by the MIUR SIR-grant ``Geometric Variational Problems" (RBSI14RVEZ) and F.~R.\ gratefully acknowledges the support from an EPSRC Research Fellowship on ``Singularities in Nonlinear PDEs'' (EP/L018934/1).

\section{Setup and preliminary results} \label{sc:setup}

In this section we recall all the notation and technical tools that will be employed in the subsequent sections. In particular, we collect many results from the framework of generalized Young measure, usually specialized to the BD-case.

\subsection{Functions of bounded deformation}

The space BD was introduced in~\cite{MatthiesStrangChristiansen79book,Suquet78,Suquet79,TemamStrang80} for applications in plasticity theory, much of the theory relevant to this work is developed in~\cite{TemamStrang80,Kohn82,Temam85book,AmbrosioCosciaDalMaso97,Babadjian15,ContiFocardiIurlano15}.

As a standing assumption throughout this whole work, let $\Omega \subset \R^d$ be an open domain with Lipschitz boundary; in the following proofs we implicitly assume $d \geq 2$, but the main results are (trivially)  true also for $d = 1$ since then BV and BD agree and (symmetric-)quasiconvexity is just convexity. The space $\BD(\Omega)$ of \term{functions of bounded deformation} is defined as the space of functions $u \in \Lrm^1(\Omega;\R^d)$ such that the distributional \term{symmetric derivative}
\[
  Eu := \frac{Du + Du^T}{2}
\]
is (representable as) a finite Radon measure $Eu \in \Mcal(\Omega;\R_\sym^{d \times d})$. Clearly, $\BD(\Omega)$ is a Banach space under the norm $\norm{u}_{\BD(\Omega)} := \norm{u}_{\Lrm^1(\Omega;\R^d)} + \abs{Eu}(\Omega)$.

We split $Eu$ according to the Lebesgue--Radon--Nikod\'{y}m decomposition as
\[
  Eu = \Ecal u \, \Lcal^d + E^s u,  \qquad
  \Ecal u := \frac{\di Eu}{\di \Lcal^d} \in \Lrm^1(\Omega;\R_\sym^{d \times d}),
\]
where the \term{approximate symmetrized gradient} $\Ecal u$ is the Radon--Nikod\'{y}m derivative of $Eu$ with respect to Lebesgue measure and $E^s u$ is the \term{singular part} of $Eu$ (with respect to $\Lcal^d$).

Since there is no Korn inequality in $\Lrm^1$, see~\cite{Ornstein62,ContiFaracoMaggi05,KirchheimKristensen16}, it can be shown that $\BV(\Omega;\R^d)$ is a proper subspace of $\BD(\Omega)$. See~\cite{ContiFocardiIurlano15} for further results in this direction.

A \term{rigid deformation} is a skew-symmetric affine map $r \colon \R^d \to \R^d$, i.e.\ $u$ is of the form
\[
  r(x) = u_0 + Rx,  \qquad \text{where $u_0 \in \R^d$, $R \in \R_\skw^{d \times d}$.}
\]
We have the following Poincar\'{e} inequality: For each $u \in \BD(\Omega)$ there exists a rigid deformation $r$ such that
\begin{equation} \label{eq:Poincare_ext}
  \norm{u+r}_{\Lrm^{d/(d-1)}(\Omega;\R^d)} \leq C \abs{Eu}(\Omega),
\end{equation}
where $C = C(\Omega)$ only depends on the domain $\Omega$. This is shown for example in~\cite{TemamStrang80} or see~\cite[Remark~II.2.5]{Temam85book}.

Finally, we will also define the \term{symmetric tensor product} $a \odot b := (a \otimes b + b \otimes a)/2 = (ab^T + ba^T)/2$ of two vectors $a,b \in \R^d$.

\subsection{Symmetric-quasiconvexity}

The appropriate generalized convexity notion related to symmetrized gradients is the following: We call a bounded Borel function $f \colon \R_\sym^{d \times d} \to \R$ \term{symmetric-quasiconvex} if
\[
  f(A) \leq \dashint_D f(A + \Ecal \psi(y)) \dd y
  \qquad \text{for all $\psi \in \Wrm^{1,\infty}_0(D;\R^d)$ and all $A \in \R_\sym^{d \times d}$,}
\]
where $D \subset \R^d$ is any bounded Lipschitz domain. Similar assertions to the ones for quasiconvex functions hold, cf.~\cite{Ebobisse00} and~\cite{BarrosoFonsecaToader00}. In particular, if $f$ has linear growth at infinity, we may replace the space $\Wrm_0^{1,\infty}(D;\R^d)$ in the above formula by $\Wrm_0^{1,1}(D;\R^d)$. It can further be shown, see~\cite[Proposition~3.4]{FonsecaMuller99} that any symmetric-quasiconvex $f$ is \emph{convex} in the directions $\R(a \odot b)$ for any $a, b \in \R^d \setminus \{0\}$.

The \term{symmetric-quasiconvex envelope} $SQf \colon \Rdds \to \R$ of a Borel function $f \colon \Rdds \to \R$ is
\[
  SQf(A) = \sup \setb{ g(A) }{ \text{$g$ symmetric-quasiconvex and $g \leq f$} }.
\]
This expression is either identically $-\infty$ or finite and symmetric-quasiconvex. Analogously to the case for usual quasiconvexity (cf.~\cite{Dacorogna08book,KinderlehrerPedregal91}), for continuous $f$, the symmetric-quasiconvex envelope can be written as
\[
  SQf(A) = \inf \setBB{ \dashint_D f \bigl( A + \Ecal \psi(z) \bigr) \dd z }
  { \psi \in \Wrm_0^{1,\infty}(D;\R^d) }.
\]

\subsection{Generalized Young measures}

The following theory is mostly from~\cite{AlibertBouchitte97,KristensenRindler10YM,Rindler11}, where also proofs and examples can be found.

Let again $\Omega \subset \R^d$ be a bounded Lipschitz domain. For $f \in \Crm(\cl{\Omega} \times \R^N)$ and $g \in \Crm(\cl{\Omega} \times \Bbb^N)$, where $\Bbb^N$ denotes the open unit ball in $\R^N$, we let
\begin{align}
    (Rf)(x,\hat{A}) &:= (1-\absn{\hat{A}}) \, f \biggl(x, \frac{\hat{A}}{1-\absn{\hat{A}}} \biggr),
      \qquad x \in \cl{\Omega}, \, \hat{A} \in \Bbb^N, \quad \text{and}  \label{eq:R} \\
    (R^{-1}g)(x,A) &:= (1+\absn{A}) \, g \biggl(x, \frac{A}{1+\absn{A}} \biggr),
      \qquad x \in \cl{\Omega}, \, A \in \R^N. \notag
\end{align}
Clearly, $R^{-1}Rf = f$ and $RR^{-1}g = g$. Define
\[
  \Ebf(\Omega;\R^N) := \setb{ f \in \Crm(\cl{\Omega} \times \R^N) }{ \text{$Rf$ extends continuously onto $\cl{\Omega \times \Bbb^N}$} }.
\]
In particular, $f \in \Ebf(\Omega;\R^N)$ has \term{linear growth at infinity}, i.e.\ there exists a constant $M \geq 0$ (in fact, $M = \norm{Rf}_{\Lrm^\infty(\Omega \times \Bbb^N)}$) with
\[
\abs{f(x,A)} \leq M(1+\abs{A})  \qquad \text{for all } x \in \cl{\Omega}, \, A \in \R^N.
\]
Furthermore, for all $f \in \Ebf(\Omega;\R^N)$, the \term{(strong) recession function} $f^\infty \colon \cl{\Omega} \times \R^N \to \R$, defined as
\begin{equation} \label{eq:f_infty}
  f^\infty(x,A) := \lim_{\substack{\!\!\!\! x' \to x \\ \!\!\!\! A' \to A \\ \; t \to \infty}}
    \frac{f(x',tA')}{t},  \qquad x \in \cl{\Omega}, \, A \in \R^N,
\end{equation}
exists and takes finite values. Clearly, $f^\infty$ is positively $1$-homogeneous in $A$, that is $f^\infty(x,\alpha A) = \alpha f^\infty(x,A)$ for all $\alpha \geq 0$. It can be shown that in fact $f \in \Crm(\cl{\Omega};\R^N)$ is in the class $\Ebf(\Omega;\R^N)$ if and only if  $f^\infty$ exists in the sense~\eqref{eq:f_infty}. 

For $f \in \Crm(\cl{\Omega} \times \R^N)$ with linear growth at infinity, $f^\infty$ may not exist, but we can always define the \term{generalized recession function} $f^\# \colon \cl{\Omega} \times \R^N \to \R$ via
\[
  f^\#(x,A) := \limsup_{\substack{\!\!\!\! x' \to x \\ \!\!\!\! A' \to A \\ \; t \to \infty}} \,
  \frac{f(x',tA')}{t},  \qquad x \in \cl{\Omega}, \, A \in \R^N.
\]
It is easy to see that $f^\#$ is always positively $1$-homogeneous and upper semicontinuous in its second argument. In many other works, $f^\#$ is just called the \enquote{recession function}, but here the distinction to our (strong) recession function $f^\infty$ is important.

A \term{(generalized) Young measure} $\nu \in \Ybf(\Omega;\R^N) \subset \Ebf(\Omega;\R^N)^*$ on the open set $\Omega \subset \R^d$ with values in $\R^N$ is a triple $\nu = (\nu_x,\lambda_\nu,\nu_x^\infty)$ consisting of
\begin{itemize}
  \item[(i)] a parametrized family of probability measures $(\nu_x)_{x \in \Omega} \subset \Mcal_1(\R^N)$, called the \term{oscillation measure};
  \item[(ii)] a positive finite measure $\lambda_\nu \in \Mcal_+(\cl{\Omega})$, called the \term{concentration measure}; and
  \item[(iii)] a parametrized family of probability measures $(\nu_x^\infty)_{x \in \cl{\Omega}} \subset \Mcal_1(\Sbb^{N-1})$, called the \term{concentration-direction measure},
\end{itemize}
for which we require that
\begin{itemize}
  \item[(iv)] the map $x \mapsto \nu_x$ is weakly* measurable with respect to $\Lcal^d$, i.e.\ the function $x \mapsto \dpr{f(x,\frarg),\nu_x}$ is $\Lcal^d$-measurable for all bounded Borel functions $f \colon \Omega \times \R^N \to \R$,
  \item[(v)] the map $x \mapsto \nu_x^\infty$ is weakly* measurable with respect to $\lambda_\nu$, and
  \item[(vi)] $x \mapsto \dprn{\abs{\frarg},\nu_x} \in \Lrm^1(\Omega)$.
\end{itemize}
Equivalently to (i)--(vi), one may require
\begin{align*}
  &(\nu_x) \in \Lrm_{w*}^\infty(\Omega;\Mcal_1(\R^N)),
    &&\lambda_\nu \in \Mcal_+(\cl{\Omega}), & \\
  &(\nu_x^\infty) \in \Lrm_{w*}^\infty(\cl{\Omega},\lambda_\nu;\Mcal_1(\Sbb^{N-1})),
    &&x \mapsto \dprb{\abs{\frarg},\nu_x} \in \Lrm^1(\Omega).
\end{align*}

The \term{duality pairing} between $f \in \Ebf(\Omega;\R^N)$ and $\nu \in \Ybf(\Omega;\R^N)$ is given as
\begin{align*}
  \ddprb{f,\nu} &:= \int_\Omega \dprb{f(x,\frarg), \nu_x} \dd x
    + \int_{\cl{\Omega}} \dprb{f^\infty(x,\frarg),\nu_x^\infty} \dd \lambda_\nu(x) \\
  &:= \int_\Omega \int_{\R^N} f(x,A) \dd \nu_x(A) \dd x
  + \int_{\cl{\Omega}} \int_{\partial \Bbb^N} f^\infty(x,A) \dd \nu_x^\infty(A) \dd \lambda_\nu(x).
\end{align*}
The weak* convergence $\nu_j \toweakstar \nu$ in $\Ybf(\Omega;\R^N) \subset \Ebf(\Omega;\R^N)^*$ is then defined with respect to this duality pairing. If $(\gamma_j) \subset \Mcal(\cl{\Omega};\R^N)$ is a sequence of measures with $\sup_j \abs{\gamma_j}(\cl{\Omega}) < \infty$, then we say that the sequence $(\gamma_j)$ \term{generates} a Young measure $\nu \in \Ybf(\Omega;\R^N)$, in symbols $\gamma_j \toY \nu$, if for all $f \in \Ebf(\Omega;\R^N)$ it holds that
\begin{align*}
&f \biggl( x, \frac{\di \gamma_j}{\di \Lcal^d}(x)\biggr) \,\Lcal^d \restrict \Omega
+ f^\infty \biggl(x, \frac{\di \gamma^s_j}{\di \abs{\gamma^s_j}}(x) \biggr) \, \abs{\gamma^s_j}\\
&\qquad\toweakstar\;\; \dprb{f(x,\frarg), \nu_x} \, \Lcal^d \restrict \Omega + \dprb{f^\infty(x,\frarg),
\nu_x^\infty} \, \lambda_\nu  \qquad\text{in $\Mcal(\cl{\Omega})$.}
\end{align*}
Here, $\gamma_j^s$ is the singular part of $\gamma_j$ with respect to Lebesgue measure. Equivalently, we could have defined $\gamma_j \toY \nu$ by requiring that $\delta_{\gamma_j} \toweakstar \nu$, where $\delta_{\gamma_j}$ are \enquote{elementary Young measures} that are naturally associated with the $\gamma_j$.

Also, for $\nu \in \Ybf(\Omega;\R^N)$ we define the \term{barycenter} as the measure
\[
  [\nu] := \dprb{\id, \nu_x} \, \Lcal^d \restrict \Omega + \dprb{\id, \nu_x^\infty} \, \lambda_\nu
  \in \Mcal(\cl{\Omega};\R^N).
\]

The following is the central compactness result in $\Ybf(\Omega;\R^N)$:

\begin{lemma}[Compactness] \label{lem:compact}
Let $(\nu_j) \subset \Ybf(\Omega;\R^N)$ be such that
\[
  \supmod_j \, \ddprb{\ONE \otimes \abs{\frarg}, \nu_j} < \infty.
\]
Then, $(\nu_j)$ is weakly* sequentially relatively compact in $\Ybf(\Omega;\R^N)$, i.e.\ there exists a subsequence (not relabeled) such that $\nu_j \toweakstar \nu$ and $\nu \in \Ybf(\Omega;\R^N)$.
\end{lemma}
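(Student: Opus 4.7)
The plan is to apply the sequential Banach--Alaoglu theorem in the dual of $\Ebf(\Omega;\R^N)$ and then identify the weak* limit with a genuine Young measure via Riesz representation and disintegration of measures. As a first step, the operator $R$ from~\eqref{eq:R} provides a linear bijection from $\Ebf(\Omega;\R^N)$ onto $\Crm(\cl{\Omega \times \Bbb^N})$ (by the very definition of $\Ebf$, through continuous extension of $Rf$), and under this identification $\Ebf(\Omega;\R^N)$ becomes a separable Banach space with the induced supremum norm. In particular, bounded sets in its dual are sequentially weakly* relatively compact. From $f = R^{-1}(Rf)$ one reads off $\abs{f(x,A)} \leq (1+\abs{A}) \norm{Rf}_\infty$, and the analogous $\abs{f^\infty(x,A)} \leq \abs{A} \norm{Rf}_\infty$ follows by passing to the limit in~\eqref{eq:f_infty}. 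Pairing with $\nu_j$ then yields
\[
  \absb{\ddprb{f,\nu_j}} \leq \norm{Rf}_\infty \Bigl( \Lcal^d(\Omega) + \ddprb{\ONE \otimes \abs{\frarg}, \nu_j} \Bigr),
\]
which is uniformly bounded in $j$ by hypothesis. Extracting a subsequence (not relabeled), I obtain $\nu_j \toweakstar L$ in $\Ebf(\Omega;\R^N)^*$ for some functional $L$ that inherits positivity from the $\nu_j$.

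Next I would realize $L$ as $\ddprb{\frarg,\nu}$ for some $\nu \in \Ybf(\Omega;\R^N)$. Transferring $L$ along $R$ produces a positive linear functional on $\Crm(\cl{\Omega \times \Bbb^N})$, to which the Riesz representation theorem associates a finite positive Borel measure $\mu$ on the compactum $\cl{\Omega \times \Bbb^N}$ satisfying $L(f) = \int Rf \, \di\mu$ for every $f \in \Ebf(\Omega;\R^N)$. The Young measure triple $(\nu_x, \lambda_\nu, \nu_x^\infty)$ is extracted by splitting $\mu = \mu^\circ + \mu^\partial$ according to whether the second factor lies in the open ball $\Bbb^N$ or on its boundary sphere $\partial\Bbb^N$. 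I would take $\lambda_\nu$ to be the first-factor projection of $\mu^\partial$ and disintegrate $\mu^\partial = \lambda_\nu \otimes \nu_x^\infty$ on the sphere to obtain the concentration measure and the concentration-direction family. Testing $L$ against $f(x,A) = \phi(x)$ (whose recession function vanishes) shows that the weighted measure $(1 - \abs{\frarg})\mu^\circ$ has first-factor projection equal to $\Lcal^d \restrict \Omega$, and hence disintegrates as $\Lcal^d \restrict \Omega \otimes \theta_x$ with probability measures $\theta_x$ on $\Bbb^N$; the oscillation measures on $\R^N$ are then the pushforwards $\nu_x := T_\# \theta_x$ under the radial homeomorphism $T(\hat A) := \hat A/(1-\abs{\hat A})$, which is the pointwise inverse of the map appearing in~\eqref{eq:R}. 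The finite-first-moment property (vi) follows because $\int_\Omega \dprb{\abs{\frarg},\nu_x}\, \di x = \int \abs{\hat A}\, \di \mu^\circ \leq \mu(\cl{\Omega \times \Bbb^N}) < \infty$.

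The main obstacle is this last reconstruction: one must verify that the triple so defined meets the weak* measurability requirements (iv)--(vi) of a Young measure, and then check by a change-of-variables computation using the explicit formula for $R^{-1}$ that the identity $L(f) = \ddprb{f,\nu}$ holds for every $f \in \Ebf(\Omega;\R^N)$, and not merely for the test classes used to pin down the individual components. Once the disintegration is organized correctly, this step amounts to careful measure-theoretic bookkeeping; the essential analytic content lies in the boundedness estimate combined with sequential Banach--Alaoglu, with the remainder being the natural dictionary between generalized Young measures on $\Omega \times \R^N$ and positive Borel measures on the compactified cylinder $\cl{\Omega \times \Bbb^N}$.
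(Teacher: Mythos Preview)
The paper does not actually prove this lemma: it is stated as part of the background in Section~\ref{sc:setup} with the remark that proofs can be found in~\cite{AlibertBouchitte97,KristensenRindler10YM,Rindler11}. Your proposal is correct and is precisely the standard argument from those references (in particular~\cite{KristensenRindler10YM}): compactify via the isometry $R \colon \Ebf(\Omega;\R^N) \to \Crm(\cl{\Omega \times \Bbb^N})$, apply sequential Banach--Alaoglu in the separable setting, represent the positive limit functional by a Radon measure on the compact cylinder, and disintegrate over the base after splitting off the sphere at infinity. The bookkeeping you outline (testing with $\phi \otimes \ONE$ to identify the base measure of the interior part as $\Lcal^d \restrict \Omega$, pushing forward by $T(\hat A) = \hat A/(1-\abs{\hat A})$, and recovering $\lambda_\nu$, $\nu_x^\infty$ from the spherical part) is exactly how the reconstruction goes, and the first-moment bound~(vi) and the verification of $L(f) = \ddprb{f,\nu}$ follow by the change of variables you indicate.
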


In particular, if $(\gamma_j) \subset \Mcal(\cl{\Omega};\R^N)$ is a sequence of measures with $\sup_j \abs{\gamma_j}(\cl{\Omega}) < \infty$ as above, then there exists a subsequence (not relabeled) and $\nu \in \Ybf(\Omega;\R^N)$ such that $\gamma_j \toY \nu$.

By a standard density argument it suffices to check weak*-convergence of Young measures by testing with a countable set of integrands only, which is equivalent to the \emph{separability} of the space $\Ebf(\Omega;\R^N)$:
 
\begin{lemma} \label{lem:dense}
There exists a countable family $\{\phi_\ell \otimes h_\ell\}_{\ell \in \N} \subset \Ebf(\Omega;\R^N)$, where $\phi_\ell \in \Crm(\cl{\Omega})$ and $h_\ell \in \Crm(\R^N)$ such that for $\nu_1,\nu_2 \in \Ybf(\Omega;\R^N)$ the following implication holds:
\[
  \text{$\ddprb{\phi_\ell \otimes h_\ell, \nu_1} = \ddprb{\phi_\ell \otimes h_\ell, \nu_2}$ for all $\ell \in \N$}\quad\Longrightarrow\quad \nu_1 = \nu_2.
\]
Moreover, all $h_\ell$ can be chosen Lipschitz continuous and each $h_\ell$ has either compact support or is positively $1$-homogeneous.
\end{lemma}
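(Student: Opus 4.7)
The plan is to exploit the isometric isomorphism $R \colon \Ebf(\Omega;\R^N) \to \Crm(\cl{\Omega \times \Bbb^N})$ induced by~\eqref{eq:R}, which makes $\Ebf(\Omega;\R^N)$ a separable Banach space since $\cl{\Omega \times \Bbb^N}$ is a compact metric space. Every $\nu \in \Ybf(\Omega;\R^N)$ defines a bounded linear functional on $\Ebf(\Omega;\R^N)$ via $f \mapsto \ddprn{f,\nu}$, and two Young measures inducing the same functional agree as triples $(\nu_x, \lambda_\nu, \nu_x^\infty)$ (standard: testing against $\phi \otimes h$ with $h \in \Crm_{\mathrm{c}}(\R^N)$ identifies the regular parts, while $1$-homogeneous $h$'s identify $\lambda_\nu$ and the concentration-direction measure). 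Consequently it suffices to produce a countable family $\{\phi_\ell \otimes h_\ell\}$ of the required form whose finite linear span is norm-dense in $\Ebf(\Omega;\R^N)$: agreement of two Young measures on this family extends by linearity and boundedness to their dense span, hence to all of $\Ebf(\Omega;\R^N)$.

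To realize the \enquote{compactly supported or positively $1$-homogeneous} dichotomy, I would first split a generic $f \in \Ebf(\Omega;\R^N)$ as $f = (f - f^\infty) + f^\infty$. The recession function $f^\infty$ is continuous, positively $1$-homogeneous in $A$, and uniquely determined by its restriction to $\cl{\Omega} \times \Sbb^{N-1}$, which can be any element of $\Crm(\cl{\Omega} \times \Sbb^{N-1})$. The remainder $f_0 := f - f^\infty$ has vanishing recession, equivalently $R(f_0) \in \Crm(\cl{\Omega \times \Bbb^N})$ vanishes on $\cl{\Omega} \times \partial \Bbb^N$; cutting off in the $A$-variable against a fixed smooth bump sequence shows that $f_0$ is a sup-norm limit of compactly-supported continuous integrands on $\cl{\Omega} \times \R^N$. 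For each summand I then invoke the standard Stone--Weierstrass fact that, for compact metric spaces $K_1, K_2$, finite linear combinations of products $\phi(x) \psi(y)$ with $\phi \in \Crm(K_1)$ and $\psi \in \Crm(K_2)$ are uniformly dense in $\Crm(K_1 \times K_2)$.

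Fix countable sup-norm dense subsets $\{\phi_\ell\} \subset \Crm(\cl{\Omega})$, $\{h_k\} \subset \mathrm{Lip}_{\mathrm{c}}(\R^N)$ (dense in $\Crm_{\mathrm{c}}(\R^N)$ via mollification composed with truncation), and $\{g_j\} \subset \mathrm{Lip}(\Sbb^{N-1})$; extend each $g_j$ to $\tilde g_j \colon \R^N \to \R$ by positive $1$-homogeneity, which preserves global Lipschitz continuity on $\R^N$ with a controlled constant. Applying Stone--Weierstrass on $\cl{\Omega} \times \cl{\Bbb_R^N}$ (for each fixed radius $R$) shows that $\{\phi_\ell \otimes h_k\}_{\ell,k}$ has linear span sup-norm dense in the vanishing-recession summand, while applying it on $\cl{\Omega} \times \Sbb^{N-1}$ shows that $\{\phi_\ell \otimes \tilde g_j\}_{\ell,j}$ has linear span dense among the positively $1$-homogeneous integrands of $\Ebf(\Omega;\R^N)$.

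Concatenating the two countable families into a single sequence $\{\phi_\ell \otimes h_\ell\}_{\ell \in \N}$ produces the desired family, with each $h_\ell$ Lipschitz and either compactly supported or positively $1$-homogeneous, and with norm-dense linear span in $\Ebf(\Omega;\R^N)$; the separation statement for Young measures then follows from the reduction of the first paragraph. I expect no real conceptual obstacle, only a minor technical bookkeeping step: verifying that the Lipschitz approximation can be performed so as to preserve compact support on the one hand (easy via multiplication by a fixed smooth cutoff) and positive $1$-homogeneity on the other (easy via extension from $\Sbb^{N-1}$), together with checking that homogeneous extension of a Lipschitz function on the sphere is globally Lipschitz on $\R^N$.
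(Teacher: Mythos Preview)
The paper does not actually give its own proof of this lemma: it is stated as a standard fact, prefaced by the remark that it is ``equivalent to the separability of the space $\Ebf(\Omega;\R^N)$'' and implicitly deferred to the references~\cite{AlibertBouchitte97,KristensenRindler10YM,Rindler11}. Your proposal is a correct and essentially the standard way to fill in this argument: the $R$-transform identifies $\Ebf(\Omega;\R^N)$ isometrically with $\Crm(\cl{\Omega\times\Bbb^N})$, the decomposition $f=(f-f^\infty)+f^\infty$ separates the ``vanishing-recession'' and ``purely $1$-homogeneous'' parts, and Stone--Weierstrass on the respective compact targets $\cl{\Omega}\times\cl{\Bbb_R^N}$ and $\cl{\Omega}\times\Sbb^{N-1}$ supplies the tensor-product approximation, with Lipschitz density handled as you describe. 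There is nothing to compare against in the paper itself; your sketch matches what one finds in the cited sources.
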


\subsection{BD-Young measures}

A Young measure in $\Ybf(\Omega;\R_\sym^{d \times d})$ is called a \term{BD-Young measure}, $\nu \in \BDY(\Omega)$, if it can be generated by a sequence of BD-symmetric derivatives. That is, for all $\nu \in \BDY(\Omega)$, there exists a (necessarily norm-bounded) sequence $(u_j) \subset \BD(\Omega)$ with $Eu_j \toY \nu$. When working with $\BDY(\Omega)$, the appropriate space of integrands is $\Ebf(\Omega;\R_\sym^{d \times d})$, since it is clear that both $\nu_x$ and $\nu_x^\infty$ only take values in $\R_\sym^{d \times d}$ whenever $\nu \in \BDY(\Omega)$. It is easy to see that for a BD-Young measure $\nu \in \BDY(\Omega)$ there exists $u \in \BD(\Omega)$ satisfying $Eu = [\nu] \restrict \Omega$; any such $u$ is called an \term{underlying deformation} of $\nu$. 

The following results about BD-Young measures constitute a \enquote{calculus} for BD-Young measures, which will be used frequently in the sequel see~\cite{KristensenRindler10YM,Rindler11,Rindler11PhD} for proofs (the first reference treats BV-Young measures, but the proofs adapt line-by-line).

\begin{lemma}[Good generating sequences] \label{lem:good_genseq}
Let $\nu \in \BDY(\Omega)$.
\begin{itemize}
  \item[(i)] There exists a generating sequence $(u_j) \subset \BD(\Omega) \cap \Crm^\infty(\Omega;\R^d)$ with $Eu_j \toY \nu$.
  \item[(ii)] If additionally $\lambda_\nu(\partial \Omega) = 0$, then the $u_j$ from (i) can be chosen to satisfy $u_j|_{\partial \Omega} = u|_{\partial \Omega}$, where $u \in \BD(\Omega)$ is any underlying deformation of $\nu$.
\end{itemize}
\end{lemma}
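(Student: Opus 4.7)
The plan is to start from any generating sequence $(v_j) \subset \BD(\Omega)$ with $Ev_j \toY \nu$ (existing by the very definition of $\BDY(\Omega)$) and modify it by two successive diagonal procedures, exploiting the separability recorded in Lemma~\ref{lem:dense} to reduce convergence of Young measures to convergence on a countable family of test integrands.

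For (i), I would mollify. Extend each $v_j$ to a neighborhood of $\cl{\Omega}$ by a bounded linear extension operator $\BD(\Omega) \to \BD(\R^d)$ (available since $\partial \Omega$ is Lipschitz) and set $v_j^\eta := v_j * \rho_\eta \in \Crm^\infty(\Omega;\R^d)$ for a standard mollifier $\rho_\eta$. For fixed $j$, as $\eta \downarrow 0$ one has $v_j^\eta \to v_j$ in $\Lrm^1(\Omega;\R^d)$ and $\abs{Ev_j^\eta}(\Omega) \to \abs{Ev_j}(\Omega)$, i.e.\ area-strict convergence in $\BD(\Omega)$. A standard Reshetnyak-type continuity argument then yields convergence of the elementary Young measures $\delta_{Ev_j^\eta} \toweakstar \delta_{Ev_j}$ in $\Ybf(\Omega;\Rdds)$ as $\eta \downarrow 0$. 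Combined with $\delta_{Ev_j} \toweakstar \nu$ and a diagonal extraction against the countable dense family of Lemma~\ref{lem:dense}, one picks $\eta(j) \downarrow 0$ such that $u_j := v_j^{\eta(j)}$ satisfies $Eu_j \toY \nu$, proving (i).

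For (ii), assume in addition that $\lambda_\nu(\partial \Omega) = 0$, and let $u \in \BD(\Omega)$ be an underlying deformation of $\nu$. Start with the smooth sequence $(u_j)$ from~(i). Since $\lambda_\nu$ is finite with $\lambda_\nu(\partial \Omega) = 0$, and $\abs{Eu}(\partial \Omega) = 0$, the quantity $\lambda_\nu(\Omega \setminus \cl{\Omega_\delta}) + \abs{Eu}(\Omega \setminus \cl{\Omega_\delta})$ tends to $0$ with $\delta \downarrow 0$, where $\Omega_\delta := \{x \in \Omega : \dist(x,\partial \Omega) > \delta\}$. For each $j$, choose $\delta_j \downarrow 0$ making this quantity at most $1/j$, partition the shell $S_j := \Omega_{\delta_j/2} \setminus \cl{\Omega_{\delta_j}}$ into many thinner sublayers, and use a pigeonhole combined with the BD-Poincar\'{e} inequality~\eqref{eq:Poincare_ext} to locate a sublayer $T_j$ on which $u_j - u$ is close in $\Lrm^1$ to a rigid deformation $r_j$ by an amount sufficiently small relative to the width of $T_j$. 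Pick a smooth cut-off $\phi_j$ with $\phi_j \equiv 1$ on $\Omega_{\delta_j}$, vanishing outside the inner edge of $T_j$, and $\abs{\nabla \phi_j} \lesssim 1/\mathrm{width}(T_j)$. Set
\[
  \tilde u_j := \phi_j (u_j - r_j) + (1-\phi_j) u,
\]
followed by a small internal mollification across $\supp \nabla \phi_j$ (strictly inside $\Omega$) to restore smoothness without affecting the boundary trace. Using $Er_j = 0$, a direct distributional computation gives
\[
  E \tilde u_j = \phi_j Eu_j + (1-\phi_j) Eu + \nabla \phi_j \odot (u_j - r_j - u),
\]
whose last term has total variation $\BigO(1/j)$ by the pigeonhole choice, while the first two terms coincide with $Eu_j$ on $\Omega_{\delta_j} \nearrow \Omega$ and are concentrated outside only on the vanishing-mass region near $\partial \Omega$. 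Testing against the dense family of Lemma~\ref{lem:dense} then yields $E \tilde u_j \toY \nu$, and by construction $\tilde u_j|_{\partial \Omega} = u|_{\partial \Omega}$.

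The main obstacle I anticipate is controlling the cut-off commutator $\nabla \phi_j \odot (u_j - r_j - u)$: its density blows up like $1/\mathrm{width}(T_j)$, so the pigeonhole must deliver a sublayer on which $u_j - u$ is, up to a rigid correction, of $\Lrm^1$-norm strictly smaller than $\mathrm{width}(T_j)$, which is exactly where the BD-Poincar\'{e} inequality~\eqref{eq:Poincare_ext} (available only modulo rigid deformations, hence the necessity of $r_j$) is needed. The hypothesis $\lambda_\nu(\partial \Omega) = 0$ is likewise essential, since otherwise the cut-off in the boundary layer would erase concentration mass of the Young measure there, preventing convergence to $\nu$.
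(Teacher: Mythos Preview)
The paper does not prove this lemma itself (it cites \cite[Lemma~4]{KristensenRindler10YM}), so let me compare your proposal to the standard argument there.

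Your approach to~(i) via extension, mollification, Reshetnyak continuity and a diagonal extraction against the countable family of Lemma~\ref{lem:dense} is correct and is exactly the standard route. One terminological slip: the condition $\abs{Ev_j^\eta}(\Omega) \to \abs{Ev_j}(\Omega)$ is \emph{strict} convergence, not area-strict; Reshetnyak continuity for integrands in $\Ebf$ requires the latter. Mollification does give area-strict convergence, so the argument goes through unchanged.

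Part~(ii) has a genuine gap in the commutator estimate. You propose to invoke the BD-Poincar\'e inequality~\eqref{eq:Poincare_ext} on a thin sublayer $T_j$ of width $w_j$ to make $\norm{u_j - u - r_j}_{\Lrm^1(T_j)}$ small relative to $w_j$. But the constant in~\eqref{eq:Poincare_ext} depends on the domain and does not improve on thin layers: for a slab of width $w$ with the remaining dimensions of order one, the test function $x \mapsto x_2 \ee_2$ already shows that after H\"older one gets at best $\norm{v - r}_{\Lrm^1(T_w)} \lesssim \abs{Ev}(T_w)$ with a constant independent of $w$, i.e.\ no gain from thinness. Your pigeonhole then reduces to requiring $\abs{E(u_j - u)}(S_j)/\delta_j \to 0$, which cannot be forced: for instance if $u_j = u + w_j$ with $\abs{\Ecal w_j} \sim 1$ uniformly in $\Omega$ (a perfectly admissible oscillating generating sequence), then $\abs{E(u_j-u)}(S_j) \sim \delta_j$ and the ratio stays bounded away from zero.

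The remedy, which is what the argument in~\cite{KristensenRindler10YM} does, is to use Poincar\'e \emph{globally on $\Omega$} rather than locally on sublayers: subtract from each $u_j$ of~(i) the rigid deformation coming from~\eqref{eq:Poincare_ext} on $\Omega$, so that the compact embedding $\BD(\Omega) \embed \Lrm^1(\Omega;\R^d)$ forces $u_j \to u$ strongly in $\Lrm^1$ (along a subsequence). Now simply choose $\delta_j \todown 0$ slowly enough that $\norm{u_j - u}_{\Lrm^1(\Omega)}/\delta_j \to 0$; the commutator $\nabla\phi_j \odot (u_j - u)$ then has vanishing total variation with $r_j = 0$ and no local Poincar\'e is needed. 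The remaining terms $\phi_j Eu_j$ and $(1-\phi_j)Eu$ are handled, as you correctly indicate, by the hypothesis $\lambda_\nu(\partial\Omega) = 0$ together with a diagonal argument.
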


The proof of this result can be found in~\cite[Lemma~4]{KristensenRindler10YM}.

\begin{lemma}[Averaging] \label{lem:averaging}
Let $\nu \in \BDY(\Omega)$ satisfy $\lambda_\nu(\partial \Omega) = 0$. Also, assume $[\nu] = Eu$ for some $u \in \BD(\Omega)$ satisfying on
e of the following two properties:
\begin{itemize}
  \item[(i)] $u$ agrees with an affine map on the boundary $\partial \Omega$ or
  \item[(ii)] $\Omega$ is a cuboid with one face normal $\xi \in \Sbb^{d-1} = \partial \Bbb^d$ and $u$ is $\xi$-directional, that is $u(x) = \eta h(x \cdot \xi)$ with $h \in \BV(\R)$ for some $\eta \in \R^d$.
\end{itemize}
Then, there exists a Young measure $\bar{\nu} \in \BDY(\Omega)$ acting on $f \in \Ebf(\Omega;\Rdds)$ as
\begin{align}
  \ddprb{f,\bar{\nu}} &= \int_\Omega \dashint_\Omega \dprb{f(x,\frarg),\nu_y} \dd y \dd x  \notag\\
  &\qquad + \int_\Omega \dashint_{\cl{\Omega}}
  \dprb{f^\infty(x,\frarg),\nu_y^\infty} \dd \lambda_\nu(y) \cdot  \frac{\lambda_\nu(\cl{\Omega})}{\abs{\Omega}} \dd x.   \label{eq:averaged_GYM_action}
\end{align}
More precisely:
\begin{enumerate}
  \item[(1)] The oscillation measure $(\bar{\nu}_x)_x$ is $\Lcal^d$-essentially constant in $x$ and for all $h \in \Crm(\Rdds)$ with linear growth at infinity it holds that
\[
  \qquad\dprb{h,\bar{\nu}_x} = \dashint_\Omega \dprb{h, \nu_y} \dd y  \qquad\text{a.e.}
\]
  \item[(2)] The concentration measure $\lambda_{\bar{\nu}}$ is a multiple of Lebesgue measure, $\lambda_{\bar{\nu}} = \alpha \Lcal^d \restrict \Omega$, where $\alpha = \lambda_\nu(\cl{\Omega}) / \abs{\Omega}$.
  \item[(3)] The concentration-direction measure $(\bar{\nu}_x^\infty)_x$ is $\Lcal^d$-essentially ($\lambda_{\bar{\nu}}$-essentially) constant and for all $h^\infty \in \Crm(\partial \Bbb^{d \times d}_\sym)$ it holds that
\[
  \qquad\dprb{h^\infty,\bar{\nu}_x^\infty} = \dashint_{\cl{\Omega}} \dprb{h^\infty,\nu_y^\infty}
    \dd \lambda_\nu(y)   \qquad\text{a.e.}
\]
\end{enumerate}
\end{lemma}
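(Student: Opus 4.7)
The plan is to realize $\bar{\nu}$ as the Young measure generated by a ``Vitali-tiled'' rescaling of a good generating sequence for $\nu$. By Lemma~\ref{lem:good_genseq}(ii), the hypothesis $\lambda_\nu(\partial\Omega) = 0$ lets us first secure a smooth generating sequence $(u_j) \subset \BD(\Omega) \cap \Crm^\infty(\Omega;\R^d)$ with $Eu_j \toY \nu$ and with matching boundary trace $u_j|_{\partial\Omega} = u|_{\partial\Omega}$. For each $\eps > 0$, I then choose a Vitali-type disjoint cover of $\Omega$ by equi-oriented scaled translates $\Omega_n^\eps = x_n^\eps + \eps\Omega$, $n = 1,\dots,N_\eps$, with $|\Omega \setminus \bigsqcup_n \Omega_n^\eps| = 0$; in case (ii) the tiles are cuboids aligned with the face normal $\xi$. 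Setting $\tilde u_j := u_j - u$, which has zero trace on $\partial\Omega$, I define
\[
  v_j^\eps(x) := \eps\, \tilde u_j\!\left(\frac{x - x_n^\eps}{\eps}\right) + u(x), \qquad x \in \Omega_n^\eps.
\]
The matching boundary condition makes the tile-traces agree with $u$, so $v_j^\eps \in \BD(\Omega)$ and, by direct computation, $Ev_j^\eps(x) = Eu_j((x-x_n^\eps)/\eps)$ in case (i) (the constant matrix $Ea$ cancels), while in case (ii) there is an additional correction $Eu(x) - Eu((x-x_n^\eps)/\eps)$ which is itself $\xi$-directional.

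The second step is to extract the Young measure limit. Testing against a product integrand $\phi \otimes f \in \Ebf(\Omega;\Rdds)$, the change of variable $y = (x-x_n^\eps)/\eps$ converts the tile integral $\int_{\Omega_n^\eps} \phi(x)\, f(\Ecal u_j((x-x_n^\eps)/\eps))\, \di x$ into $\eps^d \int_\Omega \phi(x_n^\eps + \eps y)\, f(\Ecal u_j(y))\, \di y$, and uniform continuity of $\phi$ turns the sum over $n$ into a Riemann sum converging, as $j \to \infty$ and then $\eps \to 0$, to $\int_\Omega \phi(x)\,\di x \cdot \dashint_\Omega \dprb{f(x,\frarg),\nu_y}\, \di y$. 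An analogous computation for the recession contribution shows that each rescaled tile carries singular mass $\eps^d \lambda_\nu(\cl\Omega)$, which aggregates over the $\approx \eps^{-d}|\Omega|$ tiles to give a total concentration of mass $\lambda_\nu(\cl\Omega)$ distributed as $\alpha\,\Lcal^d$ with $\alpha = \lambda_\nu(\cl\Omega)/|\Omega|$, and similarly averages the concentration-direction measure. A diagonal extraction $v_{j_k}^{\eps_k}$ combined with the separability furnished by Lemma~\ref{lem:dense} yields $\bar\nu \in \BDY(\Omega)$ with the action \eqref{eq:averaged_GYM_action}; statements (1)--(3) then follow by specializing to test functions of the form $\phi \otimes h$ and $\phi \otimes h^\infty$.

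The main obstacle I anticipate is case (ii): since $Eu$ is not constant, the correction term $Eu(x) - Eu((x-x_n^\eps)/\eps)$ does not vanish pointwise, and one must argue that it contributes to the generated Young measure in a manner compatible with the averaging formula. The key leverage is that $u$ is $\xi$-directional and the tiling preserves the cuboid face structure, so the correction has uniform $\xi$-directional structure on each tile; by Fubini's theorem reduction to the one-dimensional setting along $\xi$, combined with the fact that the sum $\sum_n Eu((x-x_n^\eps)/\eps)\ONE_{\Omega_n^\eps}$ is itself a tiled rescaling of $Eu$ whose Young-measure limit coincides with the uniform average, the correction is absorbed into the averaging. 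Secondary care is required in the singular-mass bookkeeping, where $\lambda_\nu(\partial\Omega) = 0$ is essential to prevent spurious mass appearing on tile boundaries in the limit.
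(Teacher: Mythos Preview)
Your treatment of case~(i) is essentially the paper's argument (which in turn defers to \cite[Proposition~7]{KristensenRindler10YM}): tile $\Omega$ by rescaled copies, rescale a good generating sequence with matching trace, and verify the averaged action by a Riemann-sum computation.

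In case~(ii), however, your construction does \emph{not} generate $\bar\nu$. With your definition $v_j^\eps(x) = \eps\,\tilde u_j((x-x_n^\eps)/\eps) + u(x)$, the symmetric gradient on a tile is $E\tilde u_j((x-x_n^\eps)/\eps) + Eu(x)$, where $\tilde u_j = u_j - u$. The sequence $E\tilde u_j$ generates the Young measure $\tilde\nu$ with oscillation part $\tilde\nu_y = \nu_y \conv \delta_{-\Ecal u(y)}$; the tiled rescaling then generates the averaged $\overline{\tilde\nu}$, with constant oscillation part $\overline{\tilde\nu}_x = \dashint_\Omega \nu_y \conv \delta_{-\Ecal u(y)} \dd y$; and the final shift by $Eu(x)$ (via Lemma~\ref{lem:shift}) yields oscillation part
\[
  \dashint_\Omega \nu_y \conv \delta_{\Ecal u(x) - \Ecal u(y)} \dd y,
\]
which is $x$-dependent and differs from the required $\bar\nu_x = \dashint_\Omega \nu_y \dd y$ whenever $\Ecal u$ is nonconstant. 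Since $f$ is nonlinear, the correction $Eu(x) - Eu((x-x_n^\eps)/\eps)$ is \emph{not} absorbed; it shifts the Young measure. Moreover, if $E^s u \neq 0$ the pointwise formula you write for $Ev_j^\eps$ is not even well-defined.

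The paper's fix is to glue by a piecewise \emph{constant} staircase rather than by $u$ itself: arrange the tiles in a regular grid, let $\gamma$ be the (constant) difference of boundary traces of $u$ across the two $\xi$-faces, and set
\[
  w_j(x) := \tfrac{1}{j}\, u_j\bigl(j(x-a_{jkl})\bigr) + \gamma\,\tfrac{k}{j}
  \qquad\text{on the tile with slice index $k$ in the $\xi$-direction.}
\]
The staircase term $\gamma k/j$ annihilates the jumps across $\xi$-faces (there are no jumps across the other faces since $u$ is $\xi$-directional), and being locally constant it contributes nothing to the gradient, so $\Ecal w_j(x) = \Ecal u_j(j(x-a_{jkl}))$ \emph{exactly} and the case-(i) computation applies verbatim.
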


\begin{remark} \label{rem:homYM_domain}
We remark that one may consider any averaged Young measure as in the preceding lemma to be defined on \emph{any} bounded Lipschitz domain $D \subset \R^d$, so that $\bar{\nu} \in \BDY(D)$ and~\eqref{eq:averaged_GYM_action} is replaced by
\begin{align*}
  \ddprb{f,\bar{\nu}} &= \int_D \dashint_\Omega \dprb{f(x,\frarg),\nu_y} \dd y \dd x  \notag\\
    &\qquad + \int_D \dashint_{\cl{\Omega}}
    \dprb{f^\infty(x,\frarg),\nu_y^\infty} \dd \lambda_\nu(y) \cdot  \frac{\lambda_\nu(\cl{\Omega})}{\abs{\Omega}} \dd x
\end{align*}
for any $f \in \Ebf(D;\Rdds)$. This can be achieved by a covering argument analogous to the proof of Lemma~\ref{lem:averaging} in ~\cite[Proposition~7]{KristensenRindler10YM} (covering $D$ with rescaled copies of $\Omega$).
\end{remark}

The proof of case~(i) is contained in~\cite[Proposition~7]{KristensenRindler10YM}, the proof of~(ii) is similar, but requires an additional standard staircase (piecewise affine) construction to glue the rescaled versions of $u$ together without incurring an additional jump part: Let $\Omega$ and $(u_j)$ be as in~(ii). First, by Lemma~\ref{lem:good_genseq} we may assume that there exists a sequence $(u_j) \subset \BD(\Omega) \cap \Crm^\infty(\Omega;\R^d)$ with $Eu_j \toY \nu$ and $u_j|_{\partial \Omega} = u|_{\partial \Omega}$. For every $j \in \N$ let $a_{jk} \in \R^d$ be defined such that the similar rescaled sets $\Omega_{jkl} := a_{jkl} + j^{-1} \Omega$, $k = 1,\ldots,j$, $l = 1,\ldots,j^{d-1}$, form a cover of $\Omega$. We furthermore assume that the $\Omega_{jkl}$ are arranged in a regular grid with $\Omega_{jkl}$ ($l = 1,\ldots,j^{d-1}$) lying in the $k$'th slice in $\xi$-direction.

Furthermore, denote by $\gamma \in \R^d$ the difference between the trace of $u$ on the face of $u$ in the positive $\xi$-direction and $u$ in the negative $\xi$-direction; note that because of the assumption that $u$ has the shape $u(x) = \eta h(x \cdot \xi)$, $\gamma$ is a \emph{constant vector}. Then define
\[
  w_j(x) := \begin{cases}
    \frac{1}{j} u_j \bigl( j(x-a_{jkl}) \bigr) + \gamma \dfrac{k}{j}
       & \text{if } x \in a_{jkl} + j^{-1} \Omega, \\
    0  & \text{otherwise.}
  \end{cases}
\]
It is easy to see that $w_j \in \Wrm^{1,1}(\Omega;\R^m)$ (recall that $u_j|_{\partial \Omega} = u|_{\partial \Omega}$). For the weak derivative we get
\[
  \nabla w_j(x) = \begin{cases}
    \nabla u_j \bigl( j(x-a_{jk}) \bigr)  & \text{if }
      x \in a_{jk} + j^{-1} \Omega, \\
    0  & \text{otherwise.}
  \end{cases}
\]
Note that the staircase term $\gamma k/j$ is chosen precisely to annihilate the jumps over the slice boundaries in direction $\xi$; over the other boundaries there are no jumps by the assumption on the shape of $u$. It can now be checked, following line-by-line the proof of~\cite[Proposition~7]{KristensenRindler10YM}, that the $w_j$ generate $\bar{\nu}$ as required in the lemma.

A special case is the following corollary:

\begin{corollary}[Generalized Riemann--Lebesgue lemma] \label{cor:Riemann_Lebesgue}
Let $u \in \BD(\Omega)$ that satisfied~(i) or~(ii) from the previous lemma. Then, for every open bounded Lipschitz domain $D \subset \R^d$ there exists $\nu \in \BDY(D)$ that acts on $f \in \Ebf(D;\Rdds)$ as
\begin{align*}
  \ddprb{f,\nu} &= \int_D \dashint_\Omega f(x,\Ecal u(y)) \dd y \dd x \\
  &\qquad + \frac{\abs{E^s u}(\Omega)}{\abs{\Omega}} \int_D \dashint_\Omega f^\infty \biggl(x,
    \frac{\di E^s u}{\di \abs{E^s u}}(y) \biggr) \dd \abs{E^s u}(y) \dd x.
\end{align*}
Moreover, $\lambda_\nu(\partial \Omega) = 0$.
\end{corollary}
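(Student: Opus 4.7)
The plan is to apply Lemma~\ref{lem:averaging} (together with the domain-change from Remark~\ref{rem:homYM_domain}) to the elementary BD-Young measure naturally associated with the stationary sequence $u_j \equiv u$. First, I introduce $\mu \in \Ybf(\Omega;\Rdds)$ defined by
\[
  \mu_x := \delta_{\Ecal u(x)}, \qquad \lambda_\mu := \abs{E^s u}, \qquad \mu_x^\infty := \delta_{P(x)},
\]
where $P(x) := \frac{\di E^s u}{\di \abs{E^s u}}(x)$ is the polar of the singular part of $Eu$. The stationary sequence $u_j \equiv u$ trivially generates $\mu$ (for any $f \in \Ebf(\Omega;\Rdds)$, both sides of the generation identity reduce to the same measure), so in particular $\mu \in \BDY(\Omega)$. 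Moreover, its barycenter satisfies $[\mu] = \Ecal u\,\Lcal^d + P\,\abs{E^s u} = Eu$, so $u$ itself is an underlying deformation of $\mu$.

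Second, I verify the hypotheses of Lemma~\ref{lem:averaging} for $\mu$: since $E^s u$ is a Radon measure on the open set $\Omega$, one has $\lambda_\mu(\partial \Omega) = \abs{E^s u}(\partial \Omega) = 0$, while by hypothesis the underlying deformation $u$ satisfies either (i) or (ii). Applying Lemma~\ref{lem:averaging} together with the extension to arbitrary bounded Lipschitz domains described in Remark~\ref{rem:homYM_domain} produces a Young measure $\nu \in \BDY(D)$ whose action on $f \in \Ebf(D;\Rdds)$ is given by the averaged formula. Substituting the identities $\dprb{f(x,\sbullet),\mu_y} = f(x,\Ecal u(y))$ and $\dprb{f^\infty(x,\sbullet),\mu_y^\infty} = f^\infty(x,P(y))$ into that formula, and noting that $\abs{E^s u}$ is concentrated in the open set $\Omega$ so that $\abs{E^s u}(\cl{\Omega}) = \abs{E^s u}(\Omega)$, reproduces precisely the expression for $\ddprb{f,\nu}$ claimed in the corollary.

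Finally, the assertion $\lambda_\nu(\partial \Omega) = 0$ is immediate from Lemma~\ref{lem:averaging}(2) (applied via Remark~\ref{rem:homYM_domain}), which identifies
\[
  \lambda_\nu \;=\; \frac{\abs{E^s u}(\Omega)}{\abs{\Omega}}\,\Lcal^d \restrict D;
\]
this measure is absolutely continuous with respect to $\Lcal^d$, and $\partial \Omega$ (and $\partial D$) are Lebesgue-negligible. There is essentially no conceptual obstacle in this argument; the only point requiring some care is the correct bookkeeping of the normalization constants $\lambda_\mu(\cl{\Omega})/\abs{\Omega}$ and the implicit rescaling present in Remark~\ref{rem:homYM_domain} when passing from $\Omega$ to the target domain $D$.
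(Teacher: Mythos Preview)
Your proof is correct and is precisely the argument the paper has in mind: the corollary is stated there simply as ``a special case'' of Lemma~\ref{lem:averaging}, and the special case in question is exactly the one you spell out, namely applying the averaging lemma (via Remark~\ref{rem:homYM_domain}) to the elementary Young measure $\delta_{Eu}$ generated by the constant sequence $u_j\equiv u$. Your bookkeeping of the normalization constants and the verification that $\lambda_\nu$ is absolutely continuous with respect to $\Lcal^d$ (hence vanishes on $\partial\Omega$ and $\partial D$) are also in order.
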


We will also need the following approximation result, see~\cite[Proposition~8]{KristensenRindler10YM}.

\begin{lemma}[Approximation] \label{lem:approx}
Let $\nu \in \BDY(\Omega)$ satisfy $\lambda_\nu(\partial \Omega) = 0$. Also, assume that $[\nu] = Eu$ for $u \in \BD(\Omega)$ satisfying one of the conditions~(i),~(ii) from Lemma~\ref{lem:averaging}. Then, for all $k \in \N$, there exists a partition $(C_{kl})_l$ of $(\Lcal^d + \lambda_\nu)$-almost all of $\cl{\Omega}$ into open sets $C_{kl}$, $l = 1,\ldots,N(k)$, with diameters at most $1/k$ and $(\Lcal^d + \lambda_\nu)(\partial C_{kl}) = 0$, and a sequence of Young measures $(\nu_k) \subset \BDY(\Omega)$ such that
\[
  \nu_k \toweakstar \nu  \quad\text{in $\Ybf(\Omega;\Rdds)$}
  \qquad\text{as $k \to \infty$}
\]
and for every $f \in \Ebf(\Omega;\Rdds)$
\[
  \ddprb{f,\nu_k} = \sum_{l = 1}^{N(k)} \ddprb{f,\overline{\nu \restrict {C_{kl}}}},
\]
where $\overline{\nu \restrict C_{kl}}$ designates the averaged Young measure (as in Lemma~\ref{lem:averaging}) of the restriction $\nu \restrict C_{kl}$ of $\nu$ to $C_{kl}$.
\end{lemma}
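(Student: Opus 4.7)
The plan is to build, for each $k$, a shrinking cube partition of $\cl{\Omega}$, apply the averaging of Lemma~\ref{lem:averaging} cell by cell, assemble a candidate $\nu_k$, and verify $\nu_k \toweakstar \nu$ via the separability criterion of Lemma~\ref{lem:dense}. First I would construct the partition. Since $\Lcal^d+\lambda_\nu$ is a finite positive Radon measure on $\cl{\Omega}$, only countably many hyperplanes orthogonal to each coordinate axis can carry positive mass, so a regular cubic grid of side length $1/(k\sqrt{d})$ can be translated to avoid them. Intersecting with $\cl{\Omega}$ (trimming cubes that meet $\partial\Omega$; boundary contributions are harmless because $\lambda_\nu(\partial\Omega)=0$) yields open cells $C_{k1},\dots,C_{kN(k)}$ with $\diam C_{kl}\le 1/k$ and $(\Lcal^d+\lambda_\nu)(\partial C_{kl})=0$ covering $(\Lcal^d+\lambda_\nu)$-almost all of $\cl{\Omega}$.

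Second, on each cell, Lemma~\ref{lem:averaging}(1)--(3) forces any averaging of $\nu\restrict C_{kl}$ to have the affine barycenter
\[
  [\overline{\nu\restrict C_{kl}}] \;=\; A_{kl}\,\Lcal^d\restrict C_{kl},\qquad A_{kl}:=\frac{[\nu](\cl{C_{kl}})}{\abs{C_{kl}}}\in\Rdds,
\]
so one may pick as underlying deformation the affine map $L_{kl}(x):=A_{kl}(x-x_{kl})+b_{kl}$, which verifies hypothesis~(i) of Lemma~\ref{lem:averaging} trivially. Consequently $\overline{\nu\restrict C_{kl}}\in\BDY(C_{kl})$. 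To produce a single generating sequence for $\nu_k:=\sum_l\overline{\nu\restrict C_{kl}}$, I would take cell-wise generators $(u_j^{(kl)})\subset\BD(C_{kl})\cap\Crm^\infty$ from Lemma~\ref{lem:good_genseq}(ii) with $u_j^{(kl)}|_{\partial C_{kl}}=L_{kl}|_{\partial C_{kl}}$, and glue $u_j:=u_j^{(kl)}$ on each $C_{kl}$; the translations $b_{kl}$ are adjusted inductively across the grid, via the same staircase matching as in the proof of case~(ii) of Lemma~\ref{lem:averaging}, so that no spurious jump contributions appear along the interior faces and $Eu_j\toY\nu_k$ in $\BDY(\Omega)$.

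Finally, by Lemma~\ref{lem:dense} the convergence $\nu_k\toweakstar\nu$ reduces to $\ddpr{\phi\otimes h,\nu_k}\to\ddpr{\phi\otimes h,\nu}$ for a countable family of pairs $(\phi,h)$ with $h$ either compactly supported Lipschitz or positively $1$-homogeneous Lipschitz; from the cell-wise action formula
\[
  \ddpr{\phi\otimes h,\nu_k}=\sum_l \Big(\dashint_{C_{kl}}\!\phi\dd x\Big)\Big(\int_{C_{kl}}\!\dpr{h,\nu_y}\dd y+\int_{\cl{C_{kl}}}\!\dpr{h^\infty,\nu_y^\infty}\dd\lambda_\nu(y)\Big),
\]
the difference $\ddpr{\phi\otimes h,\nu_k}-\ddpr{\phi\otimes h,\nu}$ amounts, cell by cell, to replacing $\phi(y)$ by its cell average $\dashint_{C_{kl}}\phi\dd x$. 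Uniform continuity of $\phi$ on the compact set $\cl{\Omega}$ together with finiteness of $\int_\Omega\absn{\dpr{h,\nu_y}}\dd y+\int_{\cl\Omega}\absn{\dpr{h^\infty,\nu_y^\infty}}\dd\lambda_\nu$ then forces this error to tend to $0$ as $k\to\infty$. The main obstacle is the staircase matching in the second step: since the slopes $A_{kl}$ can change from cell to cell, the affine offsets $b_{kl}$ have to be coordinated across \emph{all} interior faces of the partition so that the glued $u_j$ does not carry an artificial singular part on $\bigcup_l\partial C_{kl}$; this global version of the one-slice construction of Lemma~\ref{lem:averaging}(ii) is the technical heart of the proof.
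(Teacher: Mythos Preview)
Your proposal has a genuine gap in the second step: the application of Lemma~\ref{lem:averaging} on each cell is circular. To invoke that lemma for $\nu\restrict C_{kl}$ on the domain $C_{kl}$, the hypothesis demands that the underlying deformation of $\nu\restrict C_{kl}$, namely $u|_{C_{kl}}$, be affine on $\partial C_{kl}$ or $\xi$-directional. You instead observe that the \emph{averaged} measure $\overline{\nu\restrict C_{kl}}$ has the affine deformation $L_{kl}$ and declare hypothesis~(i) ``trivially'' verified---but $L_{kl}$ is the output of the averaging, not its input. The hypothesis of Lemma~\ref{lem:approx} gives you (i) or (ii) for $u$ on $\partial\Omega$, which tells you nothing about $u|_{\partial C_{kl}}$ on interior faces in case~(i). (In case~(ii), by contrast, $\xi$-directionality \emph{does} pass to $\xi$-aligned sub-cubes, so that case is genuinely easier---but you do not make this distinction.)

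Your proposed gluing also fails as written. With $u_j^{(kl)}|_{\partial C_{kl}}=L_{kl}|_{\partial C_{kl}}$ and $L_{kl}(x)=A_{kl}(x-x_{kl})+b_{kl}$, two adjacent cells carry affine maps with \emph{different slopes} $A_{kl}\neq A_{kl'}$ in general. No choice of the additive constants $b_{kl}$ can make these agree on a full $(d-1)$-dimensional face; the staircase trick from the proof of Lemma~\ref{lem:averaging}(ii) works precisely because the direction $\xi$ is fixed and only a scalar offset along $b$ needs adjusting. So the glued map $u_j$ necessarily acquires a singular jump part on $\bigcup_l\partial C_{kl}$, and you give no estimate showing this extra mass is asymptotically negligible. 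The paper itself does not prove this lemma but cites~\cite[Proposition~8]{KristensenRindler10YM}; the construction there builds the generating sequence for $\nu_k$ more directly from a single generating sequence for $\nu$ on all of $\Omega$ (exploiting the global boundary condition from hypothesis~(i) or~(ii)), rather than first averaging cell-wise and then attempting to glue.
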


\subsection{Localization of Young measures} \label{ssc:loc}

The paper~\cite{Rindler11} proved two localization principles for BD-Young measures, leading to so-called \enquote{tangent Young measures}; here and in the following for ease of notation we leave out the dependence of the spaces on $\Rdds$.

We first define the following \emph{regular}, or \emph{homogeneous}, spaces of (tangent) Young measures for $A_0 \in \Rdds$ ($Q$ being the standard unit cube).
\begin{align*}
  \Ebf^\reg &:= \setb{ \ONE \otimes h }{ \ONE \otimes h \in \Ebf(Q;\Rdds) }, \\
  \Ybf^\reg(A_0) &:= \setb{ \sigma = (\sigma_x,\lambda_\sigma,\sigma^\infty_x) \in \Ybf(Q;\Rdds) }{ \text{$[\sigma] = A_0 \Lcal^d \restrict Q$,}\\
    &\hspace{30pt} \text{$\sigma_y, \sigma^\infty_y$ constant in $y$, $\lambda_\sigma = \alpha \Lcal^d \restrict Q$ for some $\alpha \geq 0$ } }, \\
  \BDY^\reg(A_0) &:= \Ybf^\reg(A_0) \cap \BDY(Q) \\
    &\phantom{:}= \setb{ \sigma \in \Ybf^\reg(A_0) }{ \text{$\exists (v_j) \subset \BD(Q)$ with $Eu_j \toY \sigma$} }.
\end{align*}

The first localization principle then reads as follows:

\begin{proposition}[Localization at regular points] \label{prop:localize_reg}
Let $\nu \in \Ybf(\Omega;\Rdds)$ be a Young measure. Then, for $\Lcal^d$-almost every $x_0 \in \Omega$ there exists a \term{regular tangent Young measure}
\[
  \sigma \in \Ybf^\reg (A_0),  \qquad \text{where}\qquad
  A_0 := \dprb{\id,\nu_{x_0}} + \dprb{\id,\nu_{x_0}^\infty} \frac{\di \lambda_\nu}{\di \Lcal^d}(x_0),
\]
which satisfies
\begin{align*}
  [\sigma] &= A_0 \, \Lcal^d \restrict Q \in \Tan_Q([\nu],x_0),  &\sigma_y &= \nu_{x_0} \quad\text{a.e.,}  \\
  \lambda_\sigma &= \frac{\di \lambda_\nu}{\di \Lcal^d}(x_0) \, \Lcal^d \restrict Q \in \Tan_Q(\lambda_\nu,x_0),
    &\sigma_y^\infty &= \nu_{x_0}^\infty \quad\text{a.e.}
\end{align*}
Additionally, if $\nu \in \BDY(\Omega)$, then $\sigma \in \BDY^\reg$.
\end{proposition}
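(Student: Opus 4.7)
The plan is a classical blow-up argument around Lebesgue points of $\nu$, following the approach pioneered for BV-Young measures in~\cite{Rindler14YM}. First, invoke Lemma~\ref{lem:dense} to fix a countable family $\{\phi_\ell \otimes h_\ell\}_{\ell \in \N} \subset \Ebf(\Omega;\Rdds)$ determining Young measures. Apply the Besicovitch and Lebesgue differentiation theorems to the finite Radon measures $\lambda_\nu$, to $\dprb{h_\ell^\infty,\nu_\cdot^\infty}\, \lambda_\nu$, and to the $\Lrm^1(\Omega)$-functions $x \mapsto \dprb{h_\ell,\nu_x}$, obtaining a set $\Omega_0 \subset \Omega$ with $\Lcal^d(\Omega \setminus \Omega_0) = 0$ such that at every $x_0 \in \Omega_0$ the Radon--Nikod\'ym derivative $\frac{\di \lambda_\nu}{\di \Lcal^d}(x_0)$ exists (and the singular part of $\lambda_\nu$ gives $x_0$ no mass in the density sense), and for each $\ell$
\[
  \dashint_{x_0 + rQ} \dprb{h_\ell,\nu_y} \dd y \to \dprb{h_\ell,\nu_{x_0}}, \qquad
  \frac{1}{r^d} \int_{x_0 + rQ} \dprb{h_\ell^\infty,\nu_y^\infty} \dd \lambda_\nu(y) \to \dprb{h_\ell^\infty,\nu_{x_0}^\infty} \frac{\di \lambda_\nu}{\di \Lcal^d}(x_0)
\]
as $r \downarrow 0$.

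Next, for such $x_0$ and $r > 0$ small enough that $x_0 + rQ \subset \Omega$, define the rescaled Young measure $\nu^{x_0,r} \in \Ybf(Q;\Rdds)$ by $\nu^{x_0,r}_x := \nu_{x_0+rx}$, $(\nu^{x_0,r})_x^\infty := \nu_{x_0+rx}^\infty$, and $\lambda_{\nu^{x_0,r}} := r^{-d} (T^{x_0,r})_\# (\lambda_\nu \restrict (x_0 + rQ))$, where $T^{x_0,r}(y) := (y-x_0)/r$. A change of variables shows that $\ddprb{\ONE \otimes h_\ell,\nu^{x_0,r}}$ equals the sum of the two expressions above, hence converges to $\dprb{h_\ell,\nu_{x_0}} + \dprb{h_\ell^\infty,\nu_{x_0}^\infty} \frac{\di \lambda_\nu}{\di \Lcal^d}(x_0)$, and in particular the total mass $\ddprb{\ONE \otimes \abs{\frarg},\nu^{x_0,r}}$ stays bounded as $r \downarrow 0$. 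By the compactness Lemma~\ref{lem:compact}, some subsequence $r_n \downarrow 0$ yields $\nu^{x_0,r_n} \toweakstar \sigma$ in $\Ybf(Q;\Rdds)$. Testing against $\phi \otimes h_\ell$ for $\phi \in \Crm(\cl{Q})$ arbitrary and using the Lebesgue point property at $x_0$ localizes the barycenters: the limit $\sigma$ must satisfy $\sigma_x = \nu_{x_0}$ for $\Lcal^d$-a.e.\ $x \in Q$, $\sigma_x^\infty = \nu_{x_0}^\infty$ for $\lambda_\sigma$-a.e.\ $x$, and $\lambda_\sigma = \frac{\di \lambda_\nu}{\di \Lcal^d}(x_0) \, \Lcal^d \restrict Q$. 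Hence $\sigma \in \Ybf^\reg(A_0)$, and the identities $[\sigma] \in \Tan_Q([\nu],x_0)$, $\lambda_\sigma \in \Tan_Q(\lambda_\nu,x_0)$ are then immediate from the construction of $\nu^{x_0,r}$ via pushforward under $T^{x_0,r}$.

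For the final BD-statement, assume $\nu \in \BDY(\Omega)$ is generated by $(Eu_j) \subset \Mcal(\Omega;\Rdds)$ with $u_j \in \BD(\Omega)$. For each $r > 0$, normalize by a rigid deformation $\rho_j^{x_0,r}$ of $u_j$ on $x_0 + rQ$ (as afforded by the Poincar\'e inequality~\eqref{eq:Poincare_ext}) and set $v_j^{x_0,r}(y) := r^{-1}[u_j(x_0+ry) - \rho_j^{x_0,r}(x_0+ry)]$. Then $v_j^{x_0,r} \in \BD(Q)$ with $E v_j^{x_0,r} = r^{-d}(T^{x_0,r})_\# (E u_j \restrict (x_0+rQ))$, and a direct computation with the change-of-variables formula in the Young measure action shows $Ev_j^{x_0,r} \toY \nu^{x_0,r}$ as $j \to \infty$ for every fixed $r$. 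Since weak* convergence in $\Ybf(Q;\Rdds)$ is metrizable on norm-bounded subsets (by the separability of $\Ebf(Q;\Rdds)$ via Lemma~\ref{lem:dense}), a standard diagonal extraction produces indices $j(n) \to \infty$ such that $E v_{j(n)}^{x_0,r_n} \toY \sigma$, i.e.\ $\sigma \in \BDY^\reg(A_0)$. The only non-routine point is the joint extraction, which is handled by the metrizability observation; all other steps are direct consequences of Lebesgue/Besicovitch differentiation and the definition of $\nu^{x_0,r}$.
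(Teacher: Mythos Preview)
Your proof is correct and follows the same blow-up strategy as the reference the paper cites for this result, namely~\cite[Proposition~1]{Rindler11}; the paper itself does not reproduce the argument. The only points worth tightening are cosmetic: when passing to the diagonal sequence in the BD-case you should make explicit that the radii $r_n$ are chosen with $\lambda_\nu(\partial(x_0+r_nQ)) = 0$ (so that the restricted Young measure on $Q$ really is $\nu^{x_0,r_n}$ without spurious boundary mass), and that the uniform bound $\sup_n \abs{Ev_{j(n)}^{x_0,r_n}}(Q) < \infty$ follows from the Lebesgue-point property of $x \mapsto \dprb{\abs{\frarg},\nu_x}$ and $\frac{\di\lambda_\nu}{\di\Lcal^d}$ at $x_0$ combined with the convergence $\abs{Eu_j} \toweakstar \dprb{\abs{\frarg},\nu_x}\Lcal^d + \lambda_\nu$.
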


Here, $\Tan_Q(\mu,x_0)$ contains all (restricted) \term{tangent measures} of $\mu \in \Mcal(\Omega;\R^N)$ at $x_0 \in \Omega$, i.e.\ those measures $\sigma \in \Mcal(Q;\R^N)$ such that there exists $r_n \todown 0$ and $c_n > 0$ with $c_n T^{x_0,r_n}_\# \mu \toweakstar \sigma$, where $T^{x_0,r_n}(x) := (x-x_0)/r_n$ and 
$$T^{x_0,r_n}_\# \mu := \mu \circ (T^{x_0,r_n})^{-1} = \mu(x_0 + r_n \frarg)$$
 is the push-forward of $\mu$ under $T^{x_0,r_n}$. A proof for the preceding proposition can be found in~\cite[Proposition~1]{Rindler11}.

We furthermore remark that $\sigma$ in Proposition~\ref{prop:localize_reg} is such that for all open sets $U \subset Q$ with $\Lcal^d(\partial U) = 0$, and all $h \in \Crm(\R^{d \times d})$ such that the recession function $h^\infty$ exists in the sense of~\eqref{eq:f_infty}, it holds that
\[
  \ddprb{\ONE_U \otimes h, \sigma} = \biggl[ \dprb{h,\nu_{x_0}} + \dprb{h^\infty,\nu_{x_0}^\infty}
    \frac{\di \lambda_\nu}{\di \Lcal^d}(x_0) \biggr] \abs{U}.
\]

For the singular counterpart to Proposition~\ref{prop:localize_reg}, we first introduce the following spaces for any bounded Lipschitz domain $D \subset \R^d$ (we again omit mention of $\Rdds$ for ease of notation):
\begin{align*}
  \Ebf^\sing(D) &:= \setb{ f \in \Ebf(D;\Rdds) }{ \text{$f(x,\frarg)$ positively $1$-homogeneous} },  \\
  \Ybf^\sing(D) &:= \setb{ \nu = (\nu_x,\lambda_\nu,\nu^\infty_x) }{ \text{$\nu_x = \delta_0$ a.e.} },  \\
  \BDY^\sing(D) &:= \Ybf^\sing(D;\Rdds) \cap \BDY(D).
\end{align*}
The duality pairing between $\Ebf^\sing(D)$ and $\Ybf^\sing(D)$ is
\[
  \ddprb{f,\nu} := \int_{\cl{D}} \int_{\partial \Bbb^{d \times d}_\sym} f(x,A) \dd \nu^\infty_x(A) \dd \lambda_\nu(x).
\]
Furthermore, we say that the sequence of measures $(\mu_j) \subset \Mcal(D;\Rdds)$ generates the singular Young measure $\nu \in \Ybf^\sing(D)$, in symbols $\mu_j \toY \nu$, if
\[
  \int f \biggl( x, \frac{\di \mu_j}{\di \abs{\mu_j}}(x) \biggr) \dd \abs{\mu_j}(x)
  \to  \ddprb{f,\nu}  \qquad
  \text{for all $f \in \Ebf^\sing(D)$.}
\]

\begin{proposition}[Localization at singular points] \label{prop:localize_sing}
Let $\nu \in \Ybf(\Omega;\Rdds)$ be a Young measure. Then, for $\lambda_\nu^s$-almost every $x_0 \in \Omega$ and every bounded Lipschitz domain $D \subset \R^d$, there exists a \term{singular tangent Young measure}
\[
  \sigma \in \Ybf^\sing(D)
\]
satisfying
\begin{align*}
  [\sigma] &\in \Tan_D([\nu],x_0),  &\sigma_y &= \delta_0 \quad\text{a.e.,}  \\
  \lambda_\sigma &\in \Tan_D(\lambda_\nu^s,x_0) \setminus \{0\}, &\sigma_y^\infty &= \nu_{x_0}^\infty
    \quad\text{$\lambda_\sigma$-a.e.}
\end{align*}
Additionally, if $\nu \in \BDY(\Omega)$, then $\sigma \in \BDY^\sing(D)$.
\end{proposition}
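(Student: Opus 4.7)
The plan is to follow the blueprint of Proposition~\ref{prop:localize_reg}, but with rescaling factors tuned so that the Lebesgue-absolutely-continuous (oscillation) contribution of $\nu$ becomes invisible in the limit. As a first step, I would select a set of $\lambda_\nu^s$-full measure of good points $x_0 \in \Omega$ at which three conditions hold simultaneously: (a) Besicovitch differentiation for the mutually singular pair $(\Lcal^d, \lambda_\nu^s)$ gives $\Lcal^d(B_r(x_0)) / \lambda_\nu(B_r(x_0)) \to 0$ as $r \downarrow 0$; (b) Preiss's tangent-measure theorem applied to $\lambda_\nu^s$ produces scales $r_n \downarrow 0$ and weights $c_n > 0$ with $c_n (T^{x_0,r_n})_\# \lambda_\nu \toweakstar \lambda_\sigma \in \Tan_D(\lambda_\nu^s,x_0) \setminus \{0\}$ in $\Mcal_+(\cl{D})$; and (c) by separability of $\Crm(\partial \Bbb^{d\times d}_\sym)$ and standard Lebesgue differentiation, $x_0$ is a $\lambda_\nu^s$-Lebesgue point of the weakly* measurable map $y \mapsto \nu_y^\infty$ tested against a countable dense family.

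For such $x_0$, I would define rescaled Young measures $\sigma_n \in \Ybf(D;\Rdds)$ (for $n$ large enough that $x_0 + r_n \cl{D} \subset \Omega$) by their action on $f \in \Ebf(D;\Rdds)$:
\[
\ddprb{f,\sigma_n} := c_n r_n^d \int_D \dprb{f(y,\frarg), \nu_{x_0+r_n y}} \dd y + c_n \int_{\cl{D}} \dprb{f^\infty(y,\frarg), \nu^\infty_{x_0+r_n y}} \dd (T^{x_0,r_n})_\# \lambda_\nu(y).
\]
Concretely, this corresponds to the Young measure with oscillation $(\sigma_n)_y = \nu_{x_0+r_n y}$ carried by the shrinking Lebesgue weight $c_n r_n^d \Lcal^d$, concentration measure $\lambda_{\sigma_n} = c_n (T^{x_0,r_n})_\# \lambda_\nu$, and concentration direction $(\sigma_n)^\infty_y = \nu^\infty_{x_0+r_n y}$. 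The uniform bound from (b), together with Lemma~\ref{lem:compact}, extracts a subsequence $\sigma_n \toweakstar \sigma \in \Ybf(D;\Rdds)$. The desired properties then follow directly: (a) forces $c_n r_n^d \to 0$ (since $c_n \sim 1/\lambda_\nu(B_{r_n}(x_0))$), so the oscillation part collapses and $\sigma_y = \delta_0$ $\Lcal^d$-a.e.; (b) yields $\lambda_\sigma \in \Tan_D(\lambda_\nu^s, x_0) \setminus \{0\}$; (c) combined with $\lambda_\nu^s$-approximate continuity permits the replacement of $\dprn{\phi, \nu^\infty_{x_0+r_n y}}$ by $\dprn{\phi, \nu^\infty_{x_0}}$ in the concentration integrals, giving $\sigma^\infty_y = \nu^\infty_{x_0}$ $\lambda_\sigma$-a.e.; and $[\sigma] \in \Tan_D([\nu],x_0)$ comes from push-forward of the barycenter.

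For the BD-inheritance, if $Ev_j \toY \nu$ with $v_j \in \BD(\Omega)$, I would introduce rescaled deformations $w_j^n(y) := c_n r_n^{d-1} [v_j(x_0 + r_n y) - R_{j,n}(y)]$ with $R_{j,n}$ a rigid deformation supplied by the Poincar\'e inequality~\eqref{eq:Poincare_ext} to obtain uniform $\Lrm^{d/(d-1)}$-bounds. A direct computation gives $Ew_j^n = c_n (T^{x_0,r_n})_\# (Ev_j|_{x_0 + r_n D})$ (consistently for both the absolutely-continuous and singular parts, since the scaling factor $c_n r_n^{d-1}$ matches the codimension of a jump surface), so $E w_j^n \toY \sigma_n$ along $j$, and a standard diagonal extraction through $(j,n)$ produces a single BD-sequence on $D$ whose symmetric gradients generate $\sigma$. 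I expect the main obstacle to be Step 1(b): that the normalizing weights $c_n$ can be chosen so that $\lambda_\sigma$ is \emph{nonzero} at $\lambda_\nu^s$-a.e.\ point is the content of Preiss's deep tangent-measure theorem, not a consequence of Besicovitch differentiation alone. A subtler ancillary point is that the rescalings use the full measure $\lambda_\nu$ while the Lebesgue-point condition is naturally stated for $\lambda_\nu^s$; here the mutual-singularity estimate (a) is needed to wash out the Lebesgue-absolutely-continuous contribution to the concentration-direction integrals, so that $\sigma^\infty_y$ correctly inherits $\nu^\infty_{x_0}$ and not some mixed value.
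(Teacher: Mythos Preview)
Your approach is the one taken in the cited reference (the present paper defers to~\cite[Proposition~2]{Rindler11} rather than giving its own proof): selection of $\lambda_\nu^s$-good points via Besicovitch differentiation, Preiss's theorem for nonzero tangent measures, and $\lambda_\nu^s$-Lebesgue points of $y\mapsto\nu_y^\infty$, followed by blow-up rescaling with weights $c_n$ and a diagonal extraction for the BD-inheritance. Your identification of the two delicate points---nontriviality of the tangent measure and the passage from $\lambda_\nu$ to $\lambda_\nu^s$ in the Lebesgue-point argument---is accurate.

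One technical wrinkle is worth flagging: your intermediate functionals $\sigma_n$ are not elements of $\Ybf(D;\Rdds)$ in the sense of the paper, since the oscillation term carries the weight $c_n r_n^d \Lcal^d$ rather than $\Lcal^d$; consequently Lemma~\ref{lem:compact} does not apply to them directly. The standard remedy is either to define the candidate $\sigma := (\delta_0, \lambda_\sigma, \nu_{x_0}^\infty) \in \Ybf^\sing(D)$ a priori and verify that the blow-up functionals converge to $\ddpr{f,\sigma}$ for each $f\in\Ebf(D;\Rdds)$ (your computations~(a)--(c) already do this), or to observe that the \emph{actual} Young measure generated by $(Ew_j^n)_j$ for fixed $n$ has oscillation part $(S_{c_n r_n^d})_\# \nu_{x_0+r_n y}$ rather than $\nu_{x_0+r_n y}$, where $S_t(A)=tA$; since $c_n r_n^d\to 0$ this still collapses to $\delta_0$ in the diagonal limit, and the rest of the argument goes through unchanged.
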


A proof of this fact can be found in~\cite[Proposition~2]{Rindler11}.

\subsection{Good singular blow-ups}

In this section, as a preparation for the singular analogue of Proposition~\ref{prop:local_reg}, we establish a result about good blow-ups for BD-Young measures in Lemma~\ref{lem:very_good_blowups} below.

First, we recall from~\cite[Theorem~3]{Rindler11} the following result. We here state it in a slightly different fashion, namely for Young measures with a BD-barycenter instead of \emph{BD-generated} Young measures.

\begin{lemma}[Good singular blow-ups] \label{lem:good_blowups}
Let $\nu \in \Ybf(\Omega)$ be a Young measure with
\[
  [\nu] \restrict \Omega = Eu  \qquad
  \text{for some $u \in \BD(\Omega)$.}
\]
For $\lambda_\nu^s$-almost every $x_0 \in \Omega$ and every bounded Lipschitz domain $D \subset \R^d$, there exists a singular tangent Young measure $\sigma \in \Ybf^\sing(D)$ as in Proposition~\ref{prop:localize_sing} with $[\sigma] = [\sigma] \restrict D = Ev$ for some $v \in \BD(D)$ and the appropriate assertion among the following holds:
\begin{itemize}
  \item[(i)] If $\dprn{\id,\nu_{x_0}^\infty} \notin \setn{a \odot b}{a,b \in \R^d \setminus \{0\}}$ (this includes the case $\dprn{\id,\nu_{x_0}^\infty} = 0$), then $v$ is equal to an affine function almost everywhere.
  \item[(ii)] If $\dprn{\id,\nu_{x_0}^\infty} = a \odot b$ ($a,b \in \R^d \setminus \{0\}$) with $a \neq b$, then there exist functions $g_1,g_2 \in \BV(\R)$, $v_0 \in \R^d$, and a skew-symmetric matrix $R \in \R_\skw^{d \times d}$ such that
\[
  \qquad v(x) = v_0 + g_1(x \cdot a)b + g_2(x \cdot b)a + Rx,  \qquad \text{$x \in \R^d$ a.e.}
\]
  \item[(iii)] If $\dprn{\id,\nu_{x_0}^\infty} = a \odot a$ ($a \in \R^d \setminus \{0\}$), then there exists a function $g \in \BV(\R)$, $v_0 \in \R^d$ and a skew-symmetric matrix $R \in \R_\skw^{d \times d}$ such that
\[
  \qquad v(x) = v_0 + g(x \cdot a)a + Rx,  \qquad \text{$x \in \R^d$ a.e.}
\]
\end{itemize}
\end{lemma}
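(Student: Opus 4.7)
The plan is to combine the singular localization principle with the BD rank-one theorem of~\cite{DePhilippisRindler16} and a rigidity statement for $\BD$-functions whose symmetric derivative is concentrated along a single rank-one direction. First, I would apply Proposition~\ref{prop:localize_sing} at the chosen $x_0$ and domain $D$ to produce a singular tangent Young measure $\sigma \in \Ybf^\sing(D)$ with $[\sigma] \in \Tan_D([\nu],x_0)$, $\sigma_y = \delta_0$ a.e., and $\sigma_y^\infty = \nu_{x_0}^\infty$ $\lambda_\sigma$-a.e. The identity $\sigma_y = \delta_0$ forces $[\sigma] = M \lambda_\sigma$ with the constant matrix $M := \dprn{\id,\nu_{x_0}^\infty} \in \Rdds$. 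To produce a $\BD$-potential $v$ for $[\sigma]$, I would take scaling sequences $r_n \downarrow 0$, $c_n > 0$ realizing $[\sigma]$ as a tangent of $[\nu] = Eu$ at $x_0$, set $u_n(y) := c_n r_n^{d-1} u(x_0 + r_n y)$ so that $E u_n = c_n T^{x_0,r_n}_\# Eu \toweakstar [\sigma]$, and use~\eqref{eq:Poincare_ext} to subtract suitable rigid deformations; the resulting sequence is bounded in $\BD(D)$ and a weak* subsequential limit $v$ satisfies $Ev = [\sigma] = M \lambda_\sigma$.

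Next, I would use the BD rank-one theorem (Theorem~\ref{thm:BDpolar}) to classify the possible values of $M$: it asserts that the polar $\di E^s u / \di \abs{E^s u}$ is of the form $a \odot b$ with $a,b \in \R^d \setminus \{0\}$ at $\abs{E^s u}$-almost every point. Since $[\nu]^s = E^s u = \dprn{\id,\nu_x^\infty}\,\lambda_\nu^s$, at $\lambda_\nu^s$-almost every $x_0$ we are in one of two mutually exclusive situations: either $\dprn{\id,\nu_{x_0}^\infty} = 0$, or else $\abs{E^s u}$ and $\lambda_\nu^s$ agree infinitesimally at $x_0$, so $\dprn{\id,\nu_{x_0}^\infty}$ is a positive scalar multiple of the rank-one matrix $a \odot b$ (and the scalar can be absorbed into $a$ or $b$). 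This exhausts the trichotomy: $M = 0$ (case~(i)), $M = a \odot b$ with $a \not\parallel b$ (case~(ii)), and $M = a \odot a$ (case~(iii)).

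It then remains to deduce the explicit representation of $v$ from the constraint $Ev = M\mu$ for a \emph{fixed} symmetric matrix $M$ and positive measure $\mu$. In case~(i), $M = 0$ gives $Ev = 0$ and hence $v$ is a rigid deformation, in particular affine. In case~(iii), rotating coordinates so that $a = e_1$, the vanishing of all components $(Ev)_{ij}$ with $(i,j) \neq (1,1)$ produces a Killing-type system $\partial_i v_j + \partial_j v_i = 0$ in the directions orthogonal to $a$; slicing along $\{x \cdot a = \text{const}\}$, together with the mixed relations $\partial_1 v_j + \partial_j v_1 = 0$ ($j \geq 2$) forcing the slice-wise rigid deformations to be independent of $x_1$, gives the representation $v(x) = g(x \cdot a)\, a + Rx + v_0$ with $g \in \BV(\R)$. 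Case~(ii) is analogous but requires a two-directional slicing: decomposing $v$ according to $\spn\{a\}$, $\spn\{b\}$, and their orthogonal complement, the vanishing of all components of $Ev$ not aligned with $a \odot b$ forces the orthogonal component to be affine skew-symmetric, while the coefficients along $a$ and $b$ depend respectively only on $x \cdot b$ and $x \cdot a$, producing the claimed form. I expect the main technical obstacle to be this last rigidity step, and in particular case~(ii): the non-collinearity of $a$ and $b$ means the ``forbidden'' components of $Ev$ form a codimension-one subspace of $\Rdds$ that is not naturally tied to a single coordinate hyperplane, so disentangling the one-dimensional profiles along $a$ and along $b$ from the distributional identity requires a more delicate Fourier- or slicing-based argument than the essentially one-dimensional reduction available in case~(iii).
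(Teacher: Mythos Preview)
Your proposal follows the same architecture as the proof in~\cite[Theorem~3]{Rindler11} to which the paper defers: apply the singular localization principle, realize the barycenter $[\sigma]$ as $Ev$ via a rescaling-plus-Poincar\'e argument, observe that $Ev = M\lambda_\sigma$ with the fixed matrix $M = \dprn{\id,\nu_{x_0}^\infty}$, and then invoke a rigidity statement characterizing $\BD$-functions whose symmetric derivative points in a single matrix direction. The one substantive deviation is your handling of case~(i): you use the BD rank-one theorem (Theorem~\ref{thm:BDpolar}) to conclude that the subcase $M \neq 0$, $M \notin \{a \odot b\}$ simply does not occur for $\lambda_\nu^s$-a.e.\ $x_0$, reducing~(i) to the trivial case $Ev = 0$. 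The original argument in~\cite{Rindler11}, which predates Theorem~\ref{thm:BDpolar}, instead treats that subcase directly by an ellipticity-type rigidity: a matrix $M$ lying outside the wave cone $\{a \odot b\}$ of the operator $E$ forces any solution of $Ev = M\mu$ to have $\mu$ a multiple of Lebesgue measure and $v$ affine. Both routes are valid; yours is shorter but imports the deep structure theorem of~\cite{DePhilippisRindler16}, while the original is self-contained and actually establishes the stated conclusion of~(i) rather than showing the nontrivial subcase to be vacuous. For cases~(ii) and~(iii) your slicing and Killing-field sketch is the right idea and matches the rigidity lemma in~\cite{Rindler11}; you are also correct that the two-directional case~(ii) is where the real work lies.
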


\begin{remark} \label{rem:good_blowup_remark}
In the preceding theorem, one sees easily that if $\nu \in \BDY(\Omega)$, then also $\sigma \in \BDY^\sing(D)$ (this is the original statement in~\cite[Theorem~3]{Rindler11}). For us, however, this fact is not needed.
\end{remark}

The proof is the same as in~\cite[Theorem~3]{Rindler11}, where the whole argument is only concerned with the barycenter and not the generating sequence. Moreover, we remark that $[\sigma](\partial D)$ can be achieved by a rescaling of the blow-up sequence $r_n \todown 0$ into $\alpha r_n \todown 0$ for some $\alpha \in (0,1)$ such that $[\sigma](\partial (\alpha D)) = 0$ (assuming that $0 \in D$).

The next ingredient we will need is the theorem on the singular structure of BD-functions, proved in~\cite{DePhilippisRindler16}:

\begin{theorem} \label{thm:BDpolar}
Let $u\in \BD(\Omega)$. Then, for $\abs{E^s u}$-almost every $x \in \Omega$, there exist $a(x), b(x)\in \R^d \setminus \{0\}$ such that
\[
  \frac{\di E^s u}{\di \abs{E^s u}}(x) = a(x) \odot b(x).
\]
\end{theorem}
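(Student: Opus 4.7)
The plan is to recognize $Eu$ as an $\Acal$-free tensor-valued measure, where $\Acal$ is the Saint-Venant compatibility operator, and then invoke the general structural theorem for the singular part of $\Acal$-free measures due to the authors in~\cite{DePhilippisRindler16}. The whole argument consists of three clean steps: identify the PDE constraint on $E := Eu$, compute the associated wave cone, and apply the general structural result.

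\emph{PDE constraint and wave cone.} Since $u \in \Lrm^1(\Omega;\R^d)$ and $E = (Du + (Du)^T)/2$ is the symmetrization of a distributional gradient, the measure $E$ satisfies the Saint-Venant compatibility relations
\[
  \partial_i \partial_k E_{jl} + \partial_j \partial_l E_{ik} - \partial_i \partial_l E_{jk} - \partial_j \partial_k E_{il} = 0, \qquad i,j,k,l = 1,\dots,d,
\]
in the sense of distributions. This is a homogeneous, constant-coefficient, second-order linear PDE constraint $\Acal E = 0$, whose principal symbol $\Abb(\xi)$ acts on $A \in \Rdds$ by
\[
  \Abb(\xi)[A]_{ijkl} = \xi_i \xi_k A_{jl} + \xi_j \xi_l A_{ik} - \xi_i \xi_l A_{jk} - \xi_j \xi_k A_{il}.
\]
A direct algebraic computation (reducing via rotational covariance to $\xi = e_d$) shows that $\Abb(\xi)[A] = 0$ forces the entries $A_{jl}$ with $\xi_j = \xi_l = 0$ to vanish; combined with the symmetry of $A$ this gives $A = a \odot \xi$ for some $a \in \R^d$. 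Hence the wave cone of $\Acal$ is
\[
  \Lambda_\Acal := \bigcup_{\xi \in \R^d \setminus \{0\}} \ker \Abb(\xi) = \setb{ a \odot b }{ a,b \in \R^d },
\]
and $\Acal$ has constant-rank symbol.

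\emph{Applying the structure theorem.} The decisive input is the main result of~\cite{DePhilippisRindler16}: if $\mu \in \Mcal(\Omega;W)$ satisfies $\Acal\mu = 0$ distributionally for a homogeneous, constant-coefficient, constant-rank differential operator $\Acal$, then the polar of $\mu$ satisfies
\[
  \frac{\di \mu}{\di \abs{\mu}}(x) \in \Lambda_\Acal  \qquad \text{for $\abs{\mu}^s$-a.e.\ $x \in \Omega$.}
\]
Applied to $\mu = Eu$ together with the wave cone computation above, this immediately gives $\di E^s u / \di \abs{E^s u}(x) = a(x) \odot b(x)$ for $\abs{E^s u}$-a.e.\ $x$, with $a(x), b(x) \neq 0$ since the polar has unit norm (if either $a$ or $b$ vanished then $a \odot b = 0$, contradicting $\abs{\di E^s u / \di \abs{E^s u}} = 1$ a.e.). The main obstacle is of course this structure theorem itself: its proof combines a Preiss-style tangent-measure analysis with a sharp dimensional estimate that uses the PDE constraint to preclude singular concentration in directions outside $\Lambda_\Acal$; the heart of the argument is that an $\Acal$-free measure whose polar lies outside the wave cone is rigidly determined by elliptic-type regularity, which is incompatible with the measure being genuinely singular.
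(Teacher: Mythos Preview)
The paper does not itself prove Theorem~\ref{thm:BDpolar}; it is quoted from~\cite{DePhilippisRindler16} and used as a black box. Your proposal is a faithful sketch of exactly the argument in that reference: one verifies that $Eu$ is $\Acal$-free for the second-order Saint-Venant compatibility operator, computes the wave cone $\Lambda_\Acal = \{a \odot b : a,b \in \R^d\}$, and applies the general structure theorem for $\Acal$-free measures. So your approach is correct and coincides with the original proof.

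One small remark: the main theorem of~\cite{DePhilippisRindler16} does \emph{not} require the constant-rank hypothesis on the symbol---that is precisely the novelty of that paper compared to earlier Fourier-analytic approaches. Your observation that the Saint-Venant operator happens to have constant rank is true but superfluous for the application; you may drop that clause without loss.
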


This is the BD-analogue of the following celebrated Alberti rank-one theorem~\cite{Alberti93}: 

\begin{theorem}[Alberti's rank-one theorem] \label{thm:rankone}
Let $u\in \BV(\Omega)$. Then, for $\abs{D^s u}$-almost every $x \in \Omega$, there exist $a(x), b(x)\in \R^d \setminus \{0\}$ such that
\[
  \frac{\di D^s u}{\di \abs{D^s u}}(x) = a(x) \otimes b(x).
\]
\end{theorem}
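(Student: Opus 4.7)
The natural approach is to interpret Theorem~\ref{thm:rankone} as a special case of the general structure theorem for $\mathcal{A}$-free measures and to reproduce, in the simpler curl-free setting, the blow-up/rigidity strategy that also underlies Theorem~\ref{thm:BDpolar}. Concretely, for $u \in \BV(\Omega)$ the measure $Du$ satisfies the linear first-order PDE with constant coefficients
\[
  \partial_k (Du)_{ij} - \partial_j (Du)_{ik} = 0 \qquad \text{in } \Dcal'(\Omega),\ i,j,k = 1,\dots,d,
\]
which expresses that each row of $Du$ is a closed $1$-form.

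First I would fix an $\abs{D^s u}$-generic point $x_0 \in \Omega$ where the polar $P_0 := \frac{\di D^s u}{\di \abs{D^s u}}(x_0)$ exists and the Lebesgue density of $\abs{Du}$ is infinite, so $x_0$ is a genuine singular point with $\abs{P_0}=1$. Rescaling $Du$ by $T^{x_0,r_n}$ with suitable normalizing constants $c_n > 0$, tangent-measure theory yields a subsequential weak-$*$ limit $\tau \in \Mcal_\loc(\R^d;\R^{d \times d})$, and a Besicovitch differentiation argument applied to $\abs{D^s u}$ allows one to take $\tau = P_0 \cdot \mu$ for a nonzero positive Radon measure $\mu$ on $\R^d$. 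Since the curl-free constraint has constant coefficients, it is preserved under scaling and weak-$*$ convergence, so
\[
  P_{0,ij}\, \partial_k \mu \;=\; P_{0,ik}\, \partial_j \mu \qquad \text{in } \Dcal'(\R^d),\ \text{for all } i,j,k.
\]

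The rigidity step is then algebraic: taking distributional Fourier transforms gives $P_{0,ij}\xi_k \widehat{\mu}(\xi) = P_{0,ik}\xi_j \widehat{\mu}(\xi)$, so at every frequency $\xi$ in $\supp \widehat{\mu}$ each row of $P_0$ is parallel to $\xi$, forcing $P_0 = a \otimes \xi$ for some $a \in \R^d$. If $\supp \widehat{\mu}$ contained two non-parallel directions, two such representations of $P_0$ would force $P_0 = 0$, contradicting $\abs{P_0}=1$. Hence $\supp \widehat{\mu}$ lies on a single line $\R b$ and $P_0 = a \otimes b$, as required.

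The main obstacle is making the two passages to the limit rigorous: one must justify that $\tau$ genuinely has the product form $P_0 \cdot \mu$ (which requires uniform control of the polar on small balls around $x_0$) and, more seriously, control $\supp \widehat{\mu}$ as a tempered distribution associated to a general Radon measure. As in Lemma~\ref{lem:very_good_blowups}, the cleanest way is an iterated blow-up (using that blow-ups of blow-ups are blow-ups) reducing $\mu$ to a measure with one-dimensional structure, after which the rank-one conclusion becomes immediate. This dimensional reduction step is precisely the deep content of the De~Philippis--Rindler structure theorem (Theorem~\ref{thm:BDpolar} in the symmetric setting) and of the alternative slicing argument of Massaccesi--Vittone mentioned in the introduction.
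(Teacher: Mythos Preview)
The paper does not prove Theorem~\ref{thm:rankone} at all: it is stated as background, with a citation to Alberti's original paper~\cite{Alberti93} and to the short proof of Massaccesi--Vittone~\cite{MassaccesiVittone16}. So there is no ``paper's own proof'' to compare your attempt against.

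As for your sketch itself: the overall strategy --- blow up $Du$ at a singular point to a tangent measure of the form $P_0\,\mu$, use that the curl constraint passes to the limit, and then argue algebraically via the Fourier side that $P_0$ must be rank one --- is exactly the heuristic behind the De~Philippis--Rindler structure theorem in the curl-free case. You correctly identify the genuine difficulty: the Fourier step is purely formal, since $\mu$ need not be tempered and even when it is, ``$\supp\widehat{\mu}$ lies on a line'' is not something you can read off from the distributional identity $P_{0,ij}\,\xi_k\,\widehat{\mu} = P_{0,ik}\,\xi_j\,\widehat{\mu}$ without substantial further work. Your final paragraph essentially concedes this by deferring to~\cite{DePhilippisRindler16} (or to an iterated blow-up in the spirit of Lemma~\ref{lem:very_good_blowups}) for the ``deep content''. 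That is honest, but it means what you have written is a proof outline that invokes a theorem at least as hard as the one being proved, rather than an independent argument. If you want a self-contained route, the slicing argument of~\cite{MassaccesiVittone16} is genuinely elementary and would be a better model to follow here.
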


We will now state and prove a strengthened version of Lemma~\ref{lem:good_blowups}. For this, we first define suitable spaces:

In all of the following, let $\xi \in \Sbb^{d-1}$ and denote by $Q_\xi$ the rotated unit cube ($\abs{Q_\xi} = 1$) with one face normal $\xi$. We first define one-directional versions of the spaces $\Ebf^\sing, \Ybf^\sing, \BDY^\sing$ for $A_0 \in \Rdds \setminus \{0\}$, $\xi \in \Sbb^{d-1}$; as before, we henceforth leave out the dependence of the spaces on $\Rdds$.
\begin{align*}
  \Ebf^\sing(\xi) &:= \setb{ f \in \Ebf^\sing(Q_\xi;\Rdds) }{ f(y,\frarg) = f(y \cdot \xi, \frarg) },  \\
  \Ybf^\sing(A_0,\xi) &:= \setb{ \sigma = (\sigma_x,\lambda_\sigma,\sigma^\infty_x) \in \Ybf^\sing(Q_\xi;\Rdds) }{ \text{$[\sigma] = A_0 \lambda_\sigma$,}\\
    &\hspace{125pt} \text{$\sigma^\infty_y = \sigma^\infty_{y \cdot \xi}$, $\lambda_\sigma$ is $\xi$-directional } }, \\
  \BDY^\sing(A_0,\xi) &:= \Ybf^\sing(A_0,\xi) \cap \BDY^\sing(Q_\xi).
\end{align*}
Here, the $\xi$-directionality of $\lambda_\sigma$ means that for all Borel sets $B \subset Q_\xi$ it holds that $\lambda_\sigma(B + v) = \lambda_\sigma(B)$ for all $v \perp \xi$ such that $B + v \subset Q_\xi$. Notice that we require $A_0 \neq 0$ here (the case $A_0 = 0$ is treated in the next subsection).

Finally, the spaces $\Ybf^\sing_0(A_0,\xi)$, $\BDY^\sing_0(A_0,\xi)$ are defined to incorporate the additional constraint $\lambda_\sigma(\partial \Omega) = 0$.

\begin{lemma}[Very good singular blow-ups] \label{lem:very_good_blowups}
Let $\nu \in \Ybf(\Omega)$ be a Young measure with
\[
  [\nu] \restrict \Omega = Eu  \qquad
  \text{for some $u \in \BD(\Omega)$.}
\]
For $\lambda_\nu^s$-almost every $x_0 \in \Omega$, there exists a singular tangent Young measure
\[
  \sigma \in \Ybf^\sing(\xi \odot \eta, \xi),
\]
and such that $[\sigma] = [\sigma] \restrict Q_\xi = Ev$ for some $v \in \BD(Q_\xi)$ of the form
\[
  v(x) = v_0 + g(x \cdot \xi)\eta + \beta (\xi \otimes \eta)x + Rx,  \qquad \text{$x \in Q_\xi$ a.e.,}
\]
where $v_0 \in \R^d$, $\beta \in \R$, $g \in \BV(\R)$, $\xi, \eta \in \R^d \setminus \{0\}$, and $R \in \R^{d \times d}_\skw$.
\end{lemma}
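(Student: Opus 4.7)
The plan is to iterate the singular blow-up of Lemma~\ref{lem:good_blowups}, using Theorem~\ref{thm:BDpolar} to constrain the polar direction of $\nu_{x_0}^\infty$ and the ``blow-ups of blow-ups are blow-ups'' principle (noted in the introduction) to reduce case~(ii) of Lemma~\ref{lem:good_blowups} to the required one-directional target form.

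\textbf{First blow-up.} From $[\nu]=Eu$ and the barycenter formula, $E^s u = \dprn{\id,\nu_x^\infty}\lambda_\nu^s$. Theorem~\ref{thm:BDpolar} then forces $\dprn{\id,\nu_{x_0}^\infty}$ to be of the form $a\odot b$ with $a,b\in\R^d\setminus\{0\}$ for $\lambda_\nu^s$-a.e.\ $x_0$ where it is nonzero (the vanishing case contributes no mass to $E^s u$ and may be set aside). Applying Lemma~\ref{lem:good_blowups} produces a singular tangent Young measure $\sigma_1$ of $\nu$ at $x_0$ with $[\sigma_1]=Ev_1$, falling in case~(ii) or~(iii). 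In case~(iii) ($a=b$), one reads off $v_1(x)=v_0+g(x\cdot a)\,a+Rx$, which is already the target form with $\xi=\eta=a$ and $\beta=0$.

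\textbf{Second blow-up.} In case~(ii), $v_1(x) = v_0 + g_1(x\cdot a)\,b + g_2(x\cdot b)\,a + Rx$ with $a\neq b$ involves two linearly independent directions, so a further blow-up is needed. I would choose $x_0'$ with (a)~$x_0'\cdot a \in \supp(Dg_1)^s$ and (b)~$g_2$ approximately differentiable at $x_0'\cdot b$; since~(b) holds $\Lcal^1$-a.e.\ for BV functions, a Fubini argument on $\R^d \simeq \R a \oplus \R b \oplus \{a,b\}^\perp$ shows that the intersection of~(a) and~(b) carries the singular mass of the $(Dg_1)^s$-part of $Ev_1$. Applying Lemma~\ref{lem:good_blowups} to $\sigma_1$ at $x_0'$ and rescaling $v_1$, the first summand produces a nontrivial one-dimensional blow-up $\tilde g_1(x\cdot a)\,b$ with $\tilde g_1\in\BV(\R)$, while the second summand becomes linear in $x$ by the approximate differentiability of $g_2$. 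Using $(a\otimes b)x=(x\cdot b)\,a$, the limit is
\[
  v_2(x) = v_0' + \tilde g_1(x\cdot a)\,b + \beta\,(a\otimes b)x + R'x, \qquad \beta := (g_2)'_{\mathrm{ap}}(x_0'\cdot b),
\]
which is precisely the target form with $\xi=a$, $\eta=b$, $g=\tilde g_1$.

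\textbf{Verification.} A standard diagonal argument on a countable dense family of test integrands (cf.\ Lemma~\ref{lem:dense}) shows that the tangent Young measure $\sigma_2$ of $\sigma_1$ at $x_0'$ is itself a tangent Young measure of $\nu$ at $x_0$. The one-directional structure of $v_2$ makes $[\sigma_2]$ a scalar measure times $\xi\odot\eta$ and $\lambda_{\sigma_2}$ a product of a scalar measure on $\R\xi$ with $\Lcal^{d-1}$ on $\xi^\perp$, hence $\xi$-directional; iterating Proposition~\ref{prop:localize_sing} twice gives $\sigma_{2,y}^\infty = \nu_{x_0}^\infty$ for $\lambda_{\sigma_2}$-a.e.\ $y$, which is constant in $y$. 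Hence $\sigma_2\in\Ybf^\sing(\xi\odot\eta,\xi)$ as required. The main obstacle is realising conditions~(a) and~(b) simultaneously on a set of positive $\lambda_{\sigma_1}^s$-measure and transferring convergence of the barycenters to convergence of the full Young measures.
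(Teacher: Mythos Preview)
Your overall strategy---iterate the blow-up and use Theorem~\ref{thm:BDpolar} to pin down the polar direction---is exactly the paper's approach, and your treatment of case~(iii) and of case~(ii) when $(Dg_1)^s\neq 0$ essentially matches the paper's Case~(III). There is, however, a genuine gap: you implicitly assume that after the first blow-up the measure $\lambda_{\sigma_1}$ still carries a singular part, so that condition~(a) (``$x_0'\cdot a\in\supp(Dg_1)^s$'') can be realised and Lemma~\ref{lem:good_blowups} (or Proposition~\ref{prop:localize_sing}) can be applied a second time. But $\lambda_{\sigma_1}\in\Tan(\lambda_\nu^s,x_0)$ can be absolutely continuous even though $\lambda_\nu^s$ is singular; equivalently, one may have $D^s g_1 = D^s g_2 = 0$ with both $Dg_1,Dg_2$ nonzero. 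In that situation $\lambda_{\sigma_1}^s=0$, so the singular localization is vacuous and your second blow-up is unavailable.

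The paper handles this as its Case~(II): when both $g_1,g_2$ have absolutely continuous derivatives, one invokes the \emph{regular} localization principle (Proposition~\ref{prop:localize_reg}) at an $\Lcal^d$-generic point $y_0$ instead. The rescaling constants are then $c_n=r_n^{-d}$ (so $r_n^d c_n=1$), and the same Lebesgue-point computation you sketch shows that the $b$-directional piece converges to a fixed multiple of $a\odot b$ while the $a$-directional piece stays $a$-directional; the resulting $v$ is of the required form (in fact affine in this case). Once you add this branch, your argument aligns with the paper's proof. A minor additional point: the case $\dprn{\id,\nu_{x_0}^\infty}=0$ cannot simply be ``set aside'' on the grounds that it contributes nothing to $|E^s u|$---the claim is for $\lambda_\nu^s$-a.e.\ $x_0$, and the set $\{\dprn{\id,\nu_{x_0}^\infty}=0\}$ may have positive $\lambda_\nu^s$-mass---but it is trivially covered by taking $g\equiv 0$, $\beta=0$ and any nonzero $\xi,\eta$.
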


\begin{remark}
We note in passing that this improvement in fact allows to slightly simplify the proof of the lower semicontinuity result in~\cite{Rindler11} as well.
\end{remark}

\begin{proof}
Let $u \in \BD(\Omega)$ with $[\nu] \restrict \Omega = Eu$. Then, $E^s u = \dpr{\id,\nu_x^\infty} \lambda_\nu^s$ and
\begin{equation} \label{eq:polar}
  \dpr{\id,\nu_x^\infty} = a(x) \odot b(x)   \quad\text{for $\lambda_\nu^s$-a.e.\ $x \in \Omega$.}
\end{equation}
and some $a(x),b(x) \in \R^d$. Indeed, denoting by $\lambda_\nu^*$ the singular part of $\lambda_\nu$ with respect to $\abs{E^s u}$, the above holds at $\abs{E^s u}$-almost every $x \in \Omega$ with $a(x), b(x) \neq 0$ by Theorem~\ref{thm:BDpolar} and at $\lambda_\nu^*$-almost every $x \in \Omega$ with $a(x) = b(x) = 0$.

From the previous Lemma~\ref{lem:good_blowups} we get that there exists a singular tangent Young measure $\tau \in \Ybf^\sing(D)$ to $\nu$ at $\lambda_\nu^s$-almost every $x_0 \in \Omega$ that satisfies~(i),~(ii) or~(iii). We have that for any $w \in \BD(\R^d)$ with $Ew = [\tau]$,
\[
  Ew = [\tau] = \dprn{\id,\nu_{x_0}^\infty} \lambda_\tau = (a \odot b) \lambda_\tau
\]
for some $a,b \in \R^d$. Indeed, by Proposition~\ref{prop:localize_sing} we get $\tau_y^\infty = \nu_{x_0}^\infty$ for $\lambda_\tau$-almost every $y \in D$, which implies the first equality. Further, if $x_0$ is chosen such that~\eqref{eq:polar} holds, we infer $\dprn{\id,\tau_y^\infty} = \dprn{\id,\nu_{x_0}^\infty} = a(x_0) \otimes b(x_0)$ for $\lambda_\tau$-almost every $y \in D$.

\proofstep{Step~1.}
If $a, b$ are parallel, say $a = b$ after rescaling, then the statement of the present lemma follows immediately with $\sigma := \tau$, $\xi := a/\abs{a} = b/\abs{b}$, $D = Q_\xi$ (note that we can choose $D$ in Lemma~\ref{lem:good_blowups} as we like and we can decide beforehand whether $a,b$ are parallel, see~\eqref{eq:polar}). In this case, $\sigma = \tau \in \Ybf^\sing(a \odot a,a)$, as follows directly from Proposition~\ref{prop:localize_sing} and Lemma~\ref{lem:good_blowups}~(iii).

\proofstep{Step~2.}
Only in the case $\dprn{\id,\nu_{x_0}^\infty} = a \odot b$ with $a \neq b$ there is something left to prove. Without loss of generality we assume that $a = \ee_1$ and $b = \ee_2$, which is possible after a change of variables. In this case, by Lemma~\ref{lem:good_blowups} we have that for any $w \in \BD(Q)$ with $Ew = [\tau]$ there exist functions $g_1,g_2 \in \BV(\R)$, $w_0 \in \R^d$, and a skew-symmetric matrix $R \in \R_\skw^{d \times d}$ such that
\[
  w(y) = w_0 + g_1(y^1)\ee_2 + g_2(y^2)\ee_1 + Ry  \qquad
  \text{for a.e.\ $y = (y^1,\ldots,y^d) \in \R^d$.}
\]
Moreover,
\begin{equation} \label{eq:Dw_struct}
  Dw = (\ee_2\otimes \ee_1) Dg_1 \otimes \Lcal^{d-1} + (\ee_1\otimes \ee_2) \Lcal^1 \otimes Dg_2 \otimes \Lcal^{d-2} + R \Lcal^d.
\end{equation}
and $D^s g_1 \otimes \Lcal^{d-1}$ is singular to $\Lcal^1 \otimes D^s g_2 \otimes \Lcal^{d-2}$.

\proofstep{Case~(I).}
If either $Dg_1$ or $Dg_2$ are the zero measure, the conclusion of the theorem is trivially true with $\sigma := \tau$, $\xi := \ee_1$, $\eta := \ee_2$, $D$ the standard open unit cube. So, henceforth assume that both $Dg_1, Dg_2$ are not the zero measure. In the following we denote by $g_1', g_2'$ the \emph{approximate derivatives} of $g_1, g_2$, i.e.\ the densities of $Dg_1, Dg_2$ with respect to Lebesgue measure.

\proofstep{Case~(II).}
If $D^s g_1, D^s g_2 = 0$, then we may use the regular localization principle, Proposition~\ref{prop:localize_reg}, to construct a non-zero regular tangent Young measure $\sigma \in \Ybf^\reg$ of $\tau$ at $y_0 \in \R^d$. We will argue below that in fact $\sigma$ is a singular tangent Young measure to our original $\nu$ at $x_0$ and that $[\sigma] \in \BD(\Omega)$, see Step~3 of the proof.

\proofstep{Case~(III).}
On the other hand, if $D^s g_1 \neq 0$ (without loss of generality), then we claim we can find $y_0 = (s_0,t_0,y^3,\ldots,y^d) \in \R^d$ with the property that there exists a non-zero singular tangent Young measure $\sigma \in \Ybf^\sing(Q)$ to $\tau$ at $y_0$ and that
\[
  \lim_{r \todown 0} \frac{1}{r} \int_{t_0-r}^{t_0+r} \abs{g_2'(t) - \alpha} \dd t = 0
  \qquad\text{and}\qquad
  \lim_{r \todown 0} \frac{1}{r} \abs{D^s g_2}((t_0-r,t_0+r)) = 0
\]
with a constant $\alpha \in \R$.

Indeed, notice that second and third conditions hold for $\Lcal^1$-almost every $t_0$  because the set of Lebesgue points of $g_2'$ has full Lebesgue measure in $\R$ and the Radon--Nikod\'{y}m derivative of $D^s g_2$ by $\Lcal^1$ is zero $\Lcal^1$-almost everywhere. Hence, they hold for $(\abs{D^s g_1} \otimes \Lcal^{d-1})$-almost every $y_0 = (s_0,t_0,y_3,\ldots,y_d) \in \R^d$ by Fubini's theorem. By the singular localization principle for Young measures, Proposition~\ref{prop:localize_sing}, we know that the first property holds for almost every $y \in \R^d$ with respect to $\lambda_\tau^s$. By~\eqref{eq:Dw_struct}, and the fact that $\abs{D^s g_1} \otimes \Lcal^{d-1}$ is singular with respect to $\Lcal^{1}\otimes \abs{D^s g_2} \otimes \Lcal^{d-2}$, we have
\[
  \abs{E^s w} =\abs{\ee_1\odot\ee_2}\big(\abs{D^s g_1} \otimes \Lcal^{d-1}+\Lcal^{1}\otimes \abs{D^s g_2} \otimes \Lcal^{d-2}\big).
\]
Thus, $\abs{D^s g_1} \otimes \Lcal^{d-1} \neq 0$ is absolutely continuous with respect to $\abs{E^s w}$.

Furthermore, $\abs{E^s w}$ is absolutely continuous with respect to $\lambda_\tau^s$ because
\[
  \lambda_\tau^s = \sqrt{2} \abs{\dpr{\id,\nu_{x_0}^\infty}} \lambda_\tau^s = \sqrt{2} \abs{E^s w}
\]
since $\abs{\dpr{\id,\nu_{x_0}^\infty}} = \abs{\ee_1 \odot \ee_2} = 1/\sqrt{2}$. Thus, the first condition also holds at $(\abs{D^s g_1} \otimes \Lcal^{d-1})$-almost every $y_0$ and we find at least one $y_0 = (s_0,t_0,y^3,\ldots,y^d)$ with the claimed properties.

\proofstep{Step~3.}
We observe that $\sigma$ is still a singular tangent Young measure to $\nu$ at the original point $x_0$ if the latter was chosen suitably. Indeed, it can be easily checked that the property of being a singular tangent Young measure is preserved when passing to another \enquote{inner} (regular or singular) tangent Young measure; the only non-obvious fact here is that \enquote{tangent measures of tangent measures are tangent measures}, but this is well-known and proved for example in Theorem~14.16 of~\cite{Mattila95book}.

The \enquote{inner} blow-up sequence of the BD-primitives of the barycenters has the form
\[
  w^{(n)}(z) := r_n^{d-1}c_n w(y_0 + r_n z),  \qquad z \in \R^d,
\]
with $r_n \todown 0$ and constants $c_n = r_n^{-d}$ if we are in case~(II) and
\begin{equation} \label{eq:cn}
  c_n = \frac{1}{\ddpr{\ONE_{Q(y_0,r_n)} \otimes \abs{\frarg}, \tau}}
  = \frac{1}{\sqrt{2} \cdot \lambda_\tau(Q(y_0,r_n))}
\end{equation}
if we are in case~(III). Note that in both cases $\limsup_{n\to\infty} r_n^d c_n < \infty$. To retain a BD-uniformly bounded sequence, it might also be necessary to add an ($n$-dependent) rigid deformation to $w^{(n)}$, but for ease of notation this is omitted above.

Then, $w^{(n)} \toweakstar v$ in $\BD(Q)$ and $v$ has the property that $Ev = [\sigma]$. For $Ew^{(n)}$ we get
\begin{align*}
  Ew^{(n)} &= (\ee_1 \odot \ee_2) \Bigl( r_n^d c_n \bigl[ g_1'(s_0 + r_n z^1)
    + g_2'(t_0 + r_n z^2) \bigr] \, \Lcal^d(\di z) \\
  &\qquad\qquad\qquad + c_n T_\#^{y_0,r_n} \bigl[ D^s g_1 \otimes \Lcal^{d-1}
    + \Lcal^1 \otimes D^s g_2 \otimes \Lcal^{d-2} \bigr] \Bigr).
\end{align*}
Let $\phi \in \Crm_c^\infty(\R^d)$ and choose $R > 0$ so large that $\supp \phi \subset\subset Q(0,R) = (-R,R)^d$. Using the special properties of our choice of $y_0$ together with the fact that $c_n \lesssim r_n^{-d}$ as $n \to \infty$, we observe that
\begin{align*}
  r_n^d c_n \int \phi(z) \, \abs{g_2'(t_0 + r_n z^2) - \alpha} \dd z
    &= c_n \int \phi \Bigl(\frac{y-y_0}{r_n}\Bigr) \, \abs{g_2'(y^2) - \alpha} \dd y \\
  &\leq C \norm{\phi}_\infty \frac{1}{r_n^d} \int_{Q(y_0,r_nR)} \abs{g_2' (y^2) - \alpha} \dd y
\end{align*}
and this goes to zero as $n \to \infty$. Also,
\begin{align*}
  &c_n \int \phi \dd T_\#^{y_0,r_n} \bigl[ \Lcal^1 \otimes D^s g_2 \otimes \Lcal^{d-2} \bigr] \\
  &\qquad = c_n \int\!\!\!\int \phi \Bigl(\frac{y-y_0}{r_n}\Bigr) \dd D^s g_2(y_2) \dd (y_1,y_3,\ldots,y_d) \\
  &\qquad \leq C \norm{\phi}_\infty \frac{\abs{D^s g_2}(t_0+(-r_nR,r_nR))}{r_n} \\
  &\qquad\qquad \cdot \frac{\Lcal^{d-1}((s_0,y_0^3,\ldots,y_0^d) + (-r_nR,r_nR)^{d-1})}{r_n^{d-1}}  \to 0 \qquad\text{as $n \to \infty$.}
\end{align*}
Hence, the $\ee_2$-directional parts of $Ew^{(n)}$ in the limit converge to the \emph{fixed} matrix $\beta (\ee_1 \odot \ee_2)$, where $\beta = \alpha \lim_{n\to\infty} r_n^d c_n$ (this limit exists after taking a subsequence). More precisely, in case~(II), $_\# r_n^{-d}$ and $\sigma$ is a regular tangent Young measure to $\tau$, thus potentially $\beta \neq 0$; otherwise, in case~(III) it must hold that $\beta = 0$ (since $c_n \sim \lambda_\tau^s(Q(y_0,r_n))^{-1}$). The $\ee_1$-directional parts of $Ew^{(n)}$ clearly stay $\ee_1$-directional under the operation of taking weak* limits. Thus, $v$ is of the required form, with $(\xi,\eta) = (\ee_1,\ee_2)$.
\end{proof}

\subsection{Functional analytic properties of $\BDY^\reg, \BDY^\sing$}

In this section we prove the following lemma about our tangent Young measure spaces:

\begin{lemma} \label{lem:BDYregsing_convex}
The sets $\BDY^\reg$ and $\BDY^\sing(a \odot b,\xi)$ are convex and weakly* closed (with respect to the topology induced as a subset of $(\Ebf^\reg)^*$ and $\Ebf^\sing(\xi)^*$) for all $a,b \in \R^d \setminus \{0\}$, $\xi \in \{a,b\}$.
\end{lemma}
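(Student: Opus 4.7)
The strategy for both parts is standard: convexity via a tile-mixing construction in the spirit of Lemma~\ref{lem:averaging}, and weak* closedness via a diagonal argument combined with the metrizability of weak* convergence on bounded sets (which follows from the separability of the relevant integrand space, cf.\ Lemma~\ref{lem:dense}). I fix $A_0 \in \Rdds$ in the regular case and the pair $(a \odot b, \xi)$ in the singular case.

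For convexity of $\BDY^\reg(A_0)$, given $\sigma_1, \sigma_2 \in \BDY^\reg(A_0)$ and $\lambda \in (0,1)$, the affine map $x \mapsto A_0 x$ is an underlying deformation of each $\sigma_i$; since $\lambda_{\sigma_i}(\partial Q) = 0$, Lemma~\ref{lem:good_genseq}(ii) provides generating sequences $(u^i_j) \subset \BD(Q) \cap \Crm^\infty(Q;\R^d)$ for $\sigma_i$ with $u^i_j = A_0 \, \cdot$ on $\partial Q$. For each $n \in \N$ I decompose $Q$ into $n^d$ translated small cubes $q_k = x_k + n^{-1}Q$, assign labels $i(k) \in \{1,2\}$ in a regular checkerboard-type pattern with the proportion of label~$1$ equal to $\lambda + o(1)$ as $n \to \infty$, and set
\[
  w^n_j(x) := n^{-1} u^{i(k)}_j\bigl(n(x - x_k)\bigr) + A_0 x_k, \qquad x \in q_k.
\]
The matching affine trace $A_0 \, \cdot$ on every $\partial q_k$ ensures $w^n_j \in \BD(Q)$ with $\sup_{n,j} \abs{Ew^n_j}(Q) < \infty$. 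A tile-wise application of Lemma~\ref{lem:averaging} with Remark~\ref{rem:homYM_domain} identifies the double-limit $n, j \to \infty$ along a suitable diagonal $j = j(n)$: the symmetric gradients $Ew^n_{j(n)}$ generate exactly $\lambda \sigma_1 + (1-\lambda)\sigma_2$, placing this convex combination in $\BDY(Q)$. The singular case is analogous but uses the $\xi$-directional case~(ii) of Lemma~\ref{lem:averaging}: each $\sigma_i \in \BDY^\sing(a \odot b, \xi)$ has a $\xi$-directional underlying deformation (of the form furnished by Lemma~\ref{lem:very_good_blowups}), and $Q_\xi$ is tiled by thin slabs perpendicular to $\xi$, with a $\lambda$-fraction carrying rescaled copies of a generating sequence for $\sigma_1$ and the rest for $\sigma_2$; the staircase shifts incorporated in Lemma~\ref{lem:averaging}(ii) prevent spurious jumps across slab boundaries, and the resulting concentration measure $\lambda \lambda_{\sigma_1} + (1-\lambda)\lambda_{\sigma_2}$ remains $\xi$-directional.

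For weak* closedness, let $\sigma_n \toweakstar \sigma$ in $(\Ebf^\reg)^*$, respectively $\Ebf^\sing(\xi)^*$, with $\sigma_n$ in the relevant set. Uniform boundedness of weak*-convergent sequences gives $M := \sup_n \ddprb{\ONE \otimes \abs{\frarg}, \sigma_n} < \infty$, and since $\Ebf^\reg$ and $\Ebf^\sing(\xi)$ are closed subspaces of $\Ebf(Q;\Rdds)$ (and hence separable by Lemma~\ref{lem:dense}), the weak* topology is metrizable on the closed dual ball of radius $M$ by some metric $d$. For each $n$, I pick a generating sequence $Eu^n_j \toY \sigma_n$ as $j \to \infty$ and choose $j(n)$ so that $d(\delta_{Eu^n_{j(n)}}, \sigma_n) < 1/n$ and $\abs{Eu^n_{j(n)}}(\cl{Q}) \leq M + 1$. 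The triangle inequality yields $d(\delta_{Eu^n_{j(n)}}, \sigma) \to 0$, so $Eu^n_{j(n)} \toY \sigma$ in the corresponding duality, proving $\sigma$ is BD-generated. Membership in $\Ybf^\reg(A_0)$ (respectively $\Ybf^\sing(a \odot b, \xi)$) is preserved under the weak* limit because every defining condition (fixed barycenter, $y$-constancy, $\xi$-directionality of $\lambda_\sigma$) is expressible via duality pairings with integrands in the tested subspace.

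The main technical subtlety lies in the singular convexity construction: one must arrange the slab decomposition and the inter-slab staircase shifts so that no spurious singular mass transverse to $\xi$ is introduced in the barycenter of the combined sequence. This is exactly what the $\xi$-directional staircase construction in the proof of Lemma~\ref{lem:averaging}(ii) accomplishes, so no genuinely new argument is needed beyond a careful book-keeping.
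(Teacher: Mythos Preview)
Your weak*-closedness argument and the regular-case convexity are essentially correct and match the paper's approach. The gap is in the singular convexity. Elements of $\BDY^\reg(A_0)$ are \emph{homogeneous} by definition (constant $\sigma_y,\sigma_y^\infty$, and $\lambda_\sigma$ a multiple of Lebesgue), so your checkerboard construction works there. But elements of $\BDY^\sing(a\odot b,\xi)$ are only $\xi$-\emph{directional}: $\lambda_{\sigma_i}$ can be any $\xi$-directional measure (for instance supported on a single hyperplane) and $y\mapsto(\sigma_i)_y^\infty$ can vary with $y\cdot\xi$. Placing rescaled copies of a generator for $\sigma_1$ into many thin slabs and sending the slab width to zero \emph{homogenizes} $\lambda_{\sigma_1}$ in the $\xi$-direction; the resulting concentration measure is not $\lambda\lambda_{\sigma_1}+(1-\lambda)\lambda_{\sigma_2}$ as you claim, and testing with any non-constant $\phi(y\cdot\xi)$ in $\Ebf^\sing(\xi)$ already gives the wrong value. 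Concretely, if $\lambda_{\sigma_1}=\Hcal^{d-1}\restrict\{y\cdot\xi=0\}$, your limit spreads this mass uniformly over the $\xi$-interval, which is a different functional on $\Ebf^\sing(\xi)$.

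The paper handles this by first invoking the approximation principle (Lemma~\ref{lem:approx}) to write each $\sigma_i$ (with $\lambda_{\sigma_i}(\partial Q_\xi)=0$) as a weak* limit of piecewise-homogeneous averaged BD-Young measures; the slab-mixing is then carried out only on \emph{homogeneous} $\bar\mu,\bar\nu$, for which the construction followed by one more application of Lemma~\ref{lem:averaging} does yield $\theta\bar\mu+(1-\theta)\bar\nu$. Convexity on the whole set then follows from this homogeneous case together with the weak*-closedness already established. There is also a separate density argument (Step~3 in the paper's proof) to remove the constraint $\lambda_\sigma(\partial Q_\xi)=0$ that is needed to fix boundary values via Lemma~\ref{lem:good_genseq}(ii); you have not addressed this boundary issue either.
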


\begin{proof}
We only prove the statements for $\BDY^\sing(a \odot b,\xi)$ since they are more difficult (for regular BD-Young measures the argument is easier because our underlying deformation of the homogeneous Young measure is even affine). We furthermore assume without loss of generality that $\xi = a$.

\proofstep{Step~1: Weak*-closedness of $\BDY^\sing(a \odot b,a)$.}
Let $\{f_n\}_{n \in \N} = \{\phi_n \otimes h_n\}_{n \in \N} \subset \Ebf^\sing(a)$ be a countable set of integrands that determines Young measure convergence in $\Ybf^\sing(a \odot b,a)$, this can be achieved by a reasoning analogous to Lemma~\ref{lem:dense} (only take $\xi$-directional $\phi_n$ and positively $1$-homogeneous $h_n$). Let $\sigma$ be in the weak* closure of $\BDY^\sing(a \odot b,a)$. Then, for every $j \in \N$ there exists $\sigma_j \in \BDY^\sing(a \odot b,a)$ with
\[
  \absb{\ddprb{f_n, \sigma_j} - \ddprb{f_n, \sigma}}
  + \absb{\ddprb{\ONE \otimes \abs{\frarg}, \sigma_j} - \ddprb{\ONE \otimes \abs{\frarg}, \sigma}}
  \leq \frac{1}{j}  \qquad
  \text{for all $n \leq j$.}
\]
In particular, $\sigma_j \toweakstar \sigma$ in $\Ebf^\sing(a)^*$ and also in $\Ybf(Q_a;\Rdds)$ since the sequence $(\sigma_j)$ is compact in that space. Indeed, $\ddpr{\ONE \otimes \abs{\frarg}, \sigma_j}$ is uniformly in $j$ bounded, so we may use the compactness result from Lemma~\ref{lem:compact}. It is not hard to check that the defining properties of $\Ybf^\sing(a \odot b,a)$ (such as the $a$-directionality of $\lambda_\sigma, y \mapsto \nu^\infty_y$) are preserved under weak* limits, hence $\sigma \in \Ybf^\sing(a \odot b,a)$.

We need to show that also $\sigma$ is generated by a sequence of symmetric gradients, which follows by a diagonal argument: Select for each $j \in \N$ a function $u_j \in \BD(Q_a) \cap \Crm^\infty(Q_a;\R^d)$ (see Lemma~\ref{lem:good_genseq}) with the property that
\[
  \absBB{\int_{Q_a} f_n(x,\Ecal u_j(x)) \dd x - \ddprb{f_n,\sigma_j}}
  + \absb{\norm{\Ecal u_j}_{\Lrm^1} - \ddprb{\ONE \otimes \abs{\frarg},\sigma_j}}
  \leq \frac{1}{j}
\]
for all $n \leq j$. Then, adding a rigid deformation to the $u_j$'s, Poincar\'{e}'s inequality in $\BD$, see~\eqref{eq:Poincare_ext}, yields that there exists a (non-relabeled) subsequence such that $\Ecal u_j \toY \mu \in \BDY(Q_a)$. Clearly, $\mu = \sigma$ by construction.

\proofstep{Step~2: Convexity of $\BDY^\sing(a \odot b,a)$ assuming $\lambda_\mu(\partial Q_a) = \lambda_\nu(\partial Q_a) = 0$).}
Let $\mu,\nu \in \BDY^\sing(a \odot b,a)$  be such that  $\lambda_\mu(\partial Q_a) = \lambda_\nu(\partial Q_a) = 0$ and let $\theta \in (0,1)$. By the approximation principle, Lemma~\ref{lem:approx}, we have that both $\mu,\nu$ are weak* limits of piecewise homogeneous and averaged Young measures. The partition with respect to which the approximations are piecewise constant can be chosen the same for both $\mu$ and $\nu$ (this can be seen from the proof of the averaging principle in~\cite[Section~5.3]{KristensenRindler10YM}). Thus, by the weak* closedness proved in the first step, it suffices to show the result for homogeneous, one-directional BD-Young measures.

Assume now that we have two homogeneous, one-directional BD-Young measures $\bar{\mu},\bar{\nu} \in \BDY^\sing(a \odot b,a)$ with $\lambda_\mu(\partial Q_a) = \lambda_\nu(\partial Q_a) = 0$, which we assume to be defined on a cube with one face orthogonal to $a$. Indeed, by Remark~\ref{rem:homYM_domain} we can always assume that the averaged Young measures $\bar{\mu}, \bar{\nu}$ are defined on $Q_a$ (one can also argue by inspecting the proof of Lemma~\ref{lem:approx} to conclude that the subdivision may be chosen to consist of cubes only).

Without loss of generality we further assume $a = \ee_1$ and said cube to be the unit cube $Q = (-1/2,1/2)^d$. Thus, $[\bar{\mu}] = Eu$, $[\bar{\nu}] = Ev$ for $u(x) = \eta_1 b x_1$, $v(x) = \eta_2 b x_1$ (we can remove any additional rigid deformation). Indeed, since $[\bar{\mu}] = \eta_1 (a \odot b) \Lcal^d \restrict Q$ ($\eta_1 > 0$), the $u$ defined before satisfies $Eu = [\bar{\mu}]$. Next, let $(u_j), (v_j) \subset \BD(Q) \cap \Crm^\infty(Q;\R^d)$ be bounded sequences such that $\Ecal u_j \toY \bar{\mu}$, $\Ecal v_j \toY \bar{\nu}$ and $u_j = u$, $v_j = v$ on $\partial Q$, see Lemma~\ref{lem:good_genseq}.

Now split $Q$ into a slice $S_1 = (-1/2,\theta-1/2) \times (1/2,1/2)^{d-1}$ with volume $\theta$ and a slice $S_2 = (\theta-1/2,1/2) \times (1/2,1/2)^{d-1}$ with volume $1-\theta$. Then cover $S_1, S_2$ with disjoint re-scaled copies of $Q$ of side length at most $1/j$ (for instance arranged in strips), namely
\[
  S_1 = \left( \bigcup_{k \in \N} p_{jk} + \eps_j Q \right) \cup N_1,  \qquad
  S_2 = \left( \bigcup_{k \in \N} q_{jk} + \delta_j Q  \right) \cup N_2,
\]
where $\eps_j, \delta_j \leq 1/j$ and $\abs{N_1} = \abs{N_2} = 0$. Set
\[
  w_j := \begin{cases}
    \eps_j u_j \Bigl( \frac{x-p_{jk}}{\eps_j} \Bigr) + u(p_{jk})  &
      \text{if $x \in p_{jk} + \eps_j Q$, $k \in \N$,}\\
    \delta_j v_j \Bigl( \frac{x-q_{jk}}{\delta_j} \Bigr) + v(q_{jk}) + \beta b &
      \text{if $x \in q_{jk} + \delta_j Q$, $k \in \N$,}
  \end{cases}
\]
with $\beta \in \R$ such that there is no jump between $S_1$ and $S_2$, i.e.\ $\beta = (\eta_1-\eta_2)(\theta-1/2)$.
Then,
\[
  \Ecal w_j := \begin{cases}
    \Ecal u_j \Bigl( \frac{x-p_{jk}}{\eps_j} \Bigr)  &
      \text{if $x \in p_{jk} + \eps_j Q$, $k \in \N$,}\\
    \Ecal v_j \Bigl( \frac{x-q_{jk}}{\delta_j} \Bigr) &
      \text{if $x \in q_{jk} + \delta_j Q$, $k \in \N$.}
  \end{cases}
\]
Moreover, $(w_j)$ is uniformly bounded in $\BD(Q)$ and $\Ecal w_j \toY \gamma \in \BDY^\sing(a \odot b,a)$. Effectively, in $S_1, S_2$ we are repeating the averaging construction also underlying Lemma~\ref{lem:averaging} and so, similar conclusions to the ones in that lemma hold. In particular, we get for $\phi \otimes h \in \Ebf^\sing(Q)$ with $\phi \in \Crm(Q)$, $h \in \Crm(\Rdds)$ that
\[
  \ddprb{\phi \otimes h, \gamma}
  = \frac{\ddprb{\ONE \otimes h, \bar{\mu}}}{\abs{Q}} \int_{S_1} \phi(x) \dd x
  + \frac{\ddprb{\ONE \otimes h, \bar{\nu}}}{\abs{Q}} \int_{S_2} \phi(x) \dd x
\]
Finally, apply the averaging principle, Lemma~\ref{lem:averaging}, to $\gamma$ to get $\bar{\gamma} \in \BDY^\sing(a \odot b,a)$, which by~\eqref{eq:averaged_GYM_action} has the property
\begin{align*}
  \ddprb{\phi \otimes h, \bar{\gamma}}
  &= \ddprb{\ONE \otimes h, \gamma} \dashint_Q \phi(x) \dd x \\
  &= \Bigl[ \theta \ddprb{\ONE \otimes h, \bar{\mu}} + (1-\theta) \ddprb{\ONE \otimes h, \bar{\nu}} \Bigr] \dashint_Q \phi(x) \dd x \\
  &= \theta \ddprb{\phi \otimes h, \bar{\mu}} + (1-\theta) \ddprb{\phi \otimes h, \bar{\nu}}
\end{align*}
for $\phi \otimes h$ as above. This shows the claim for homogeneous, one-directional BD-Young measures.

\proofstep{Step~3: Convexity of $\BDY^\sing(a \odot b,a)$.}
To conclude the proof we show that the set of $\mu\in BDY^\sing(a \odot b,a)$ such that $\lambda_\mu(\partial Q_a) = 0$ is weakly* dense in $\BDY^\sing(a \odot b,a)$. Indeed, assume that $a = \ee_1$, $\sigma \in \BDY^\sing(\ee_1 \odot b, \ee_1)$ and that for $(u_j) \subset \BD(Q) \cap \Crm^\infty(Q;\R^d)$ with $u_j|_{\partial Q} = bg(x_1)|_{\partial Q}$ for some $g \in \BV(-1/2,1/2)$, we have $\Ecal u_j \toY \sigma$. We consider $g$ to be extended  continuously to all of $\R$. Then, define for $\alpha > 1$,
\[
  u_j^\alpha(x) := u_j(\alpha x_1,x_2,\ldots,x_d).
\]
It is not hard to see that $\Ecal u_j^\alpha \toY \sigma^\alpha$ for some $\sigma^\alpha \in \BDY^\sing(\ee_1 \odot b, \ee_1)$ with  $\lambda_\sigma (\partial Q_a) = 0$ and $\sigma^\alpha \toweakstar \sigma$ as $\alpha \todown 1$. This and the previous step easily implies the convexity of $\BDY^\sing(a \odot b,a)$.
\end{proof}

\section{Local characterization} \label{sc:local}

We first show the characterization result for tangent Young measures, i.e.\ those Young measures originating from Propositions~\ref{prop:localize_reg} and~\ref{prop:localize_sing} above.

\subsection{Characterization for regular blow-ups}

With the definition of $\Ebf^\reg$, $\Ybf^\reg(A_0)$, $\BDY^\reg(A_0)$ from Section~\ref{ssc:loc}, we have the following result about the characterization at regular points:

\begin{proposition} \label{prop:local_reg}
Let $\sigma \in \Ybf^\reg(A_0)$ for $A_0 \in \Rdds$ and assume that
\[
  h(A_0) \leq \dprb{h,\sigma_y} + \dprb{h^\#,\sigma^\infty_y} \frac{\di \lambda_\sigma}{\di \Lcal^d}(y)
\]
for all symmetric-quasiconvex $h \in \Crm(\Rdds)$ with linear growth at infinity. Then, $\sigma \in \BDY^\reg(A_0)$.
\end{proposition}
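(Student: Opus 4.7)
The approach is Hahn--Banach separation in $(\Ebf^\reg)^*$, using that $\BDY^\reg(A_0)$ is convex and weakly* closed by Lemma~\ref{lem:BDYregsing_convex}. Suppose for contradiction that $\sigma \in \Ybf^\reg(A_0)$ satisfies the Jensen-type hypothesis but $\sigma \notin \BDY^\reg(A_0)$. Then the separation theorem produces some $\ONE \otimes h \in \Ebf^\reg$ (so $h \in \Crm(\Rdds)$ has linear growth and the strong recession function $h^\infty$ exists) and a constant $c \in \R$ with
\[
  \ddprb{\ONE \otimes h, \sigma} < c \leq \ddprb{\ONE \otimes h, \tau} \qquad \text{for all } \tau \in \BDY^\reg(A_0).
\]

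To bound the right-hand side from above by $SQh(A_0)$, I would test with competitors $u(y) := A_0 y + \psi(y)$ for $\psi \in \Wrm_0^{1,\infty}(Q;\R^d)$. Such $u$ is affine on $\partial Q$, so Corollary~\ref{cor:Riemann_Lebesgue} (applied with $D = \Omega = Q$) produces a Young measure $\tau_\psi \in \BDY(Q)$ satisfying $\ddprb{\ONE \otimes h, \tau_\psi} = \dashint_Q h(A_0 + \Ecal \psi(y)) \dd y$. One then checks that in fact $\tau_\psi \in \Ybf^\reg(A_0)$: the concentration measure vanishes because $u \in \Wrm^{1,\infty}$, the oscillation family is $y$-independent by the averaging construction, and the barycenter equals $A_0 \Lcal^d \restrict Q$ since $\int_Q \Ecal \psi \dd y = 0$ by the zero trace of $\psi$. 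Taking the infimum over $\psi$ and invoking the integral representation of $SQh(A_0)$ recalled in Section~\ref{sc:setup} then yields $c \leq SQh(A_0)$.

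For the matching lower bound, I apply the Jensen-type hypothesis to the envelope $SQh$. The function $SQh$ is symmetric-quasiconvex and has linear growth (being sandwiched between an affine lower bound for $h$ and $h$ itself), and is continuous since symmetric-quasiconvexity with linear growth enforces convexity along the rank-one-symmetric directions $\R(a \odot b)$, which suffices for local Lipschitz regularity. Hence at any $y \in Q$ where $\sigma_y = \sigma_0$, $\sigma_y^\infty = \sigma_0^\infty$ and $\di \lambda_\sigma / \di \Lcal^d = \alpha$ (and these hold a.e.\ by the very definition of $\Ybf^\reg(A_0)$), the hypothesis applied to $SQh$ gives
\[
  SQh(A_0) \leq \dprb{SQh, \sigma_0} + \alpha \dprb{(SQh)^\#, \sigma_0^\infty} \leq \dprb{h, \sigma_0} + \alpha \dprb{h^\infty, \sigma_0^\infty} = \ddprb{\ONE \otimes h, \sigma},
\]
using the pointwise bounds $SQh \leq h$ and $(SQh)^\# \leq h^\# = h^\infty$ (the last equality holding because $h \in \Ebf^\reg$). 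Chaining the three estimates produces $c \leq SQh(A_0) \leq \ddprb{\ONE \otimes h, \sigma} < c$, a contradiction.

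The main technical obstacle I anticipate is verifying carefully that the elementary Young measure $\tau_\psi$ built from $u(y) = A_0 y + \psi(y)$ actually lies in $\BDY^\reg(A_0)$ with the claimed pairing against $\ONE \otimes h$; in particular, constancy in $y$ of the oscillation family and the barycenter identity $[\tau_\psi] = A_0 \Lcal^d \restrict Q$ both have to be extracted from the averaging/homogenization machinery of Corollary~\ref{cor:Riemann_Lebesgue}. Everything else reduces to standard manipulation of the generalized Young-measure duality and the elementary comparison of recession functions.
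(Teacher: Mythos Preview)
Your Hahn--Banach strategy is the same as the paper's, but your execution is more direct. The paper, after separating with $f_H$, introduces an $\eps$-regularized integrand $g_\eps(A) := f_H(A) + \eps\abs{A}$, extracts a BD-bounded minimizing sequence for $SQg_\eps(A_0)$, passes to an averaged Young measure $\bar{\mu} \in \BDY^\reg(A_0)$, and then chains $\ddpr{f_H,\sigma} \geq \ddpr{g_\eps,\bar{\mu}} - \eps(\ldots) \geq \ddpr{f_H,\bar{\mu}} - \eps(\ldots) \geq \kappa - \eps(\ldots)$ before letting $\eps \todown 0$. You bypass all of this by noting that the separation inequality tested against each $\tau_\psi$ already gives $c \leq SQh(A_0)$, and then applying the Jensen hypothesis to $SQh$ itself. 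This is legitimate and shorter: the paper's $\eps$-device is there only to secure BD-compactness of the minimizing sequence needed to build $\bar{\mu}$, a construction your route never requires. Your worry about $\tau_\psi \in \BDY^\reg(A_0)$ is routine: Corollary~\ref{cor:Riemann_Lebesgue} applied to $u(y)=A_0y+\psi(y)\in \Wrm^{1,\infty}$ yields a BD-Young measure with zero concentration, $y$-constant oscillation part, and barycenter $\dashint_Q(A_0+\Ecal\psi)\,\Lcal^d = A_0\Lcal^d\restrict Q$ since $\psi$ has zero trace.

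One genuine correction, though: your justification that $SQh$ has linear growth because it is ``sandwiched between an affine lower bound for $h$ and $h$ itself'' is not valid as written, since the separating integrand $h$ need not admit any affine minorant (nothing prevents $h(A)\sim -\abs{A}$ along some directions). The correct route is the one the paper takes at the start of its Step~2: the inequality $c \leq SQh(A_0)$ that you already established shows $SQh(A_0) > -\infty$, whence $SQh$ is finite and symmetric-quasiconvex everywhere (see the appendix of~\cite{KinderlehrerPedregal91}); then from the upper bound $SQh \leq h \leq M(1+\abs{\frarg})$ and separate convexity along each $\R(a\odot b)$, Lemma~2.5 of~\cite{Kristensen99} supplies the matching linear lower bound. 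With this fix your chain $c \leq SQh(A_0) \leq \ddprn{\ONE\otimes h,\sigma} < c$ closes the contradiction.
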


Our proof here is quite concise since it is very close to Kinderlehrer \& Pedregal's original argument~\cite{KinderlehrerPedregal91, KinderlehrerPedregal94} and also essentially the same as the one given for~\cite[Proposition~3.2]{Rindler14YM}.

\begin{proof}
\proofstep{Step 1.}
First, by Lemma~\ref{lem:BDYregsing_convex}, the set $\BDY^\reg(A_0)$ is weakly*-closed and convex (considered as a subset of $(\Ebf^\reg)^*$).

We will show below that for every weakly*-closed affine half-space $H$ in $(\Ebf^\reg)^*$ with $\BDY^\reg(A_0) \subset H$, we have $\sigma \in H$. Then the Hahn--Banach theorem will imply that $\sigma \in \BDY^\reg(A_0)$. Fix such a weakly* closed half-space $H$. By standard arguments from functional analysis, see for example~\cite[Theorem~V.1.3]{Conway90book}, there exists $f_H \in \Ebf^\reg$ such that
\[
  H = \setb{ e^* \in (\Ebf^\reg)^* }{ e^*(f_H) \geq \kappa }.
\]
In particular,
\[
  \ddprb{f_H,\mu} \geq \kappa  \qquad\text{for all $\mu \in \BDY^\reg(A_0)$.}
\]
We will show $\ddprn{f_H,\sigma} \geq \kappa$, whereby $\sigma \in H$.

\proofstep{Step 2.}
For the the symmetric-quasiconvex envelope $SQf_H$ of $f_H$ it holds that $SQf_H(A_0) > - \infty$. Indeed, otherwise we could find $w \in \Wrm_{A_0 x}^{1,\infty}(\Bbb^d;\R^m)$, that is $w \in \Wrm^{1,\infty}(\Bbb^d;\R^m)$ and $w(x) = A_0 x$ for all $x \in \partial \Bbb^d$, such that
\[
  \int_{\Bbb^d} f_H(\Ecal w(y)) \dd y  <  \kappa.
\]
Then, using the generalized Riemann--Lebesgue Lemma, Corollary~\ref{cor:Riemann_Lebesgue}, there exists $\mu \in \BDY^\reg(A_0)$ with
\[
  \ddprb{f_H,\mu} = \int_{\Bbb^d} f_H(\Ecal w(y)) \dd y < \kappa,
\]
which is a contradiction.

For a fixed $\eps > 0$, the function
\[
  g_\eps(A) := f_H(A) + \eps \abs{A},  \qquad A \in \R^{m \times d},
\]
lies in $\Ebf^\reg$. It holds that
\[
  SQg_\eps(A_0) \geq SQf_H(A_0) + \eps \abs{A_0} > -\infty.
\]
Consequently, the function $SQg_\eps$ is symmetric-quasiconvex, see the appendix of~\cite{KinderlehrerPedregal91}. Moreover $SQg_\epsilon(A) \leq (M+1)(1+\abs{A})$. By Lemma~2.5 in~\cite{Kristensen99}, even $\abs{SQg_\epsilon(A)} \leq \tilde{M}(1+\abs{A})$ for some $\tilde{M} = \tilde{M}(d,m,M) > 0$. Using $g_\eps \geq SQg_\eps$, $g_\eps^\infty \geq (SQg_\eps)^\#$, and the assumption,
\begin{equation} \label{eq:reg_est_2}
  \hspace{-10pt}\ddprb{g_\eps,\sigma} \geq \int_{\Bbb^d} \dprb{SQg_\eps,\sigma_y} + \dprb{(SQg_\eps)^\#,\sigma_y^\infty} \, \frac{\di \lambda_\sigma}{\di \Lcal^d}(y) \dd y \geq SQg_\eps(A_0) \abs{\Bbb^d}.
\end{equation}

Next, take a sequence $(w_j) \subset \Wrm_{A_0 x}^{1,\infty}(\Bbb^d;\R^m)$ with
\[
  \dashint_{\Bbb^d} g_\eps(\Ecal w_j(y)) \dd y  \to  SQg_\eps(A_0).
\]
Moreover, possibly discarding leading elements of the sequence $(w_j)$,
\begin{align*}
  SQg_\eps(A_0) + 1 &\geq \dashint_{\Bbb^d} g_\eps(\Ecal w_j(y)) \dd y \geq \dashint_{\Bbb^d} SQf_H(\Ecal w_j(y)) + \eps \abs{\Ecal w_j(y)} \dd y \\
  &\geq SQf_H(A_0) + \frac{\eps}{\abs{\Bbb^d}} \cdot \norm{\Ecal w_j}_{\Lrm^1}.
\end{align*}
Hence, the sequence $(w_j)$ is uniformly bounded in $\BD(\Bbb^d)$ and, up to a subsequence, $\Ecal w_j \toY \mu \in \BDY(\Bbb^d)$. Apply Lemma~\ref{lem:averaging} to replace $\mu$ by its averaged version $\bar{\mu} \in \BDY^\reg(A_0)$. Then,
\[
  \ddprb{g_\eps,\bar{\mu}} = \ddprb{g_\eps,\mu} = \lim_{j \to \infty} \int_{\Bbb^d} g_\eps(\Ecal w_j(y)) \dd y = SQg_\eps(A_0) \abs{\Bbb^d}.
\]
Combining with~\eqref{eq:reg_est_2}, we get
\begin{align*}
  \ddprb{f_H,\sigma} &= \ddprb{g_\eps,\sigma} - \eps \ddprb{\ONE \otimes \abs{\frarg},\sigma} \\
  &\geq SQg_\eps(A_0) \abs{\Bbb^d} - \eps \ddprb{\ONE \otimes \abs{\frarg},\sigma} \\
  &= \ddprb{g_\eps,\bar{\mu}} - \eps \ddprb{\ONE \otimes \abs{\frarg},\sigma} \\
  &\geq \ddprb{f_H,\bar{\mu}} - \eps \ddprb{\ONE \otimes \abs{\frarg},\sigma} \\
  &\geq \kappa - \eps \ddprb{\ONE \otimes \abs{\frarg},\sigma}
\end{align*}
since $\bar{\mu} \in \BDY^\reg(A_0) \subset H$. Now let $\eps \todown 0$ to get $\ddpr{f_H,\sigma} \geq \kappa$. Thus, $\sigma \in H$.
\end{proof}

\subsection{Characterization for singular blow-ups}

Here we will prove the singular analogue of Proposition~\ref{prop:local_reg}.

\begin{proposition} \label{prop:local_sing}
$\Ybf^\sing_0(a \odot b,\xi) = \BDY^\sing_0(a \odot b,\xi)$ for all $a,b \in \R^d \setminus \{0\}$, $\xi \in \{a,b\}$.
\end{proposition}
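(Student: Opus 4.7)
The plan is to follow the Hahn--Banach scheme used for Proposition~\ref{prop:local_reg}. By Lemma~\ref{lem:BDYregsing_convex}, the set $\BDY^\sing_0(a \odot b,\xi)$ is convex and weakly* closed as a subset of $\Ybf^\sing_0(a \odot b,\xi) \subset \Ebf^\sing(\xi)^*$, so if the stated equality failed we could separate some $\sigma \in \Ybf^\sing_0(a \odot b,\xi) \setminus \BDY^\sing_0(a \odot b,\xi)$ from $\BDY^\sing_0(a \odot b,\xi)$ by a weakly* closed half-space $H = \setn{e^* \in \Ebf^\sing(\xi)^*}{e^*(f_H) \geq \kappa}$, with $f_H \in \Ebf^\sing(\xi)$. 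The goal would then be to derive a contradiction by exhibiting a $\mu \in \BDY^\sing_0(a \odot b,\xi)$ with $\ddprb{f_H,\mu} \leq \ddprb{f_H,\sigma}$.

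Assuming $\xi = a$ (the case $\xi = b$ is symmetric), the separating integrand has the form $f_H(y,A) = f_H(y \cdot a,A)$ with $f_H(y \cdot a,\frarg)$ positively $1$-homogeneous. Every $\sigma \in \Ybf^\sing_0(a \odot b,a)$ has $\sigma_y = \delta_0$, $\lambda_\sigma$ an $a$-directional measure on $Q_a$ with no mass on $\partial Q_a$, and $\sigma_y^\infty$ essentially depending only on $y \cdot a$ with barycenter $\dprb{\id,\sigma_y^\infty} = a \odot b$; hence the pairing $\ddprb{f_H,\sigma}$ is encoded by a one-dimensional integral against the push-forward of $\lambda_\sigma$ under $y \mapsto y \cdot a$. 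Using the averaging and approximation principles (Lemma~\ref{lem:averaging}(ii) together with Lemma~\ref{lem:approx}, applied to subdivisions by cuboids with one face normal to $a$), the problem reduces to the case where $\sigma$ is \emph{homogeneous}: $\lambda_\sigma$ is a multiple of $\Lcal^d \restrict Q_a$ and $\sigma_y^\infty \equiv \sigma_0^\infty$ is a fixed probability measure on $\partial \Bbb^{d \times d}_\sym$ with $\dprb{\id,\sigma_0^\infty} = a \odot b$.

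In this homogeneous regime, we would construct a BD-generating sequence from two types of elementary building blocks. First, one-dimensional maps of the form $v(x) = g(x \cdot a) b$ with $g \in \BV(\R)$, which produce singular symmetric gradients $Ev = (a \odot b) \, Dg \otimes \Lcal^{d-1}$ concentrated in the single rank-one-symmetric direction $(a \odot b)/\abs{a \odot b}$. Second, \enquote{artificially concentrated} symmetric gradients obtained by compressing non-singular BD-maps in the $a$-direction, as supplied by Lemma~\ref{lem:artconc}. Combining these blocks, with relative weights dictated by a disintegration of $\sigma_0^\infty$, and gluing them together via the staircase construction from Step~2 of the proof of Lemma~\ref{lem:BDYregsing_convex}, we would obtain the required $\mu \in \BDY^\sing_0(a \odot b,a)$.

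The main obstacle is matching an \emph{arbitrary} probability profile $\sigma_0^\infty$ constrained only by its barycenter: a priori $\sigma_0^\infty$ need not be supported on rank-one-symmetric matrices, and it is not immediately clear that such generic profiles can be realized as concentration limits of symmetric gradients. Overcoming this requires the precise structural information provided by Theorem~\ref{thm:BDpolar} concerning the form of BD-singularities, which is what enables the artificial-concentration construction of Lemma~\ref{lem:artconc}: knowing that BD-singularities appear along rank-one-symmetric directions $a \odot b$, one can control the direction in which compression in the $a$-direction produces concentration, and thereby synthesize any prescribed $\sigma_0^\infty$ with the correct barycenter. This is the critical point where the strong BD rank-one theorem, rather than merely the barycenter constraint, is indispensable.
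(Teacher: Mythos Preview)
Your Hahn--Banach setup and the reduction to one-directional integrands are correct and match the paper. However, the heart of your argument has a genuine gap: you propose to \emph{synthesize} an arbitrary prescribed concentration-direction measure $\sigma_0^\infty$ (subject only to $\dprn{\id,\sigma_0^\infty} = a \odot b$) by combining one-dimensional building blocks and artificial concentrations. You never explain how this synthesis actually works, and in fact it is not clear that every such $\sigma_0^\infty$ arises this way from any finite combination of the blocks you describe. The blocks of type one produce only Dirac masses at $(a \odot b)/\abs{a \odot b}$, and Lemma~\ref{lem:artconc} gives no direct control over $\hat{\nu}_y^\infty$; it only preserves pairings with positively $1$-homogeneous test functions. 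So the claimed synthesis is unsubstantiated.

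The paper avoids this problem entirely by invoking the Kirchheim--Kristensen result (Lemma~\ref{lem:auto_Jensen} / Theorem~\ref{thm:KirchheimKristensen}): any symmetric-quasiconvex, positively $1$-homogeneous function is \emph{convex} at every matrix $a \odot b$, hence the Jensen inequality $h(a \odot b) \leq \dprn{h,\sigma_y^\infty}$ holds automatically for \emph{every} probability measure with barycenter $a \odot b$. This is precisely what lets the paper pass from $\ddprn{f_H^\eps,\sigma}$ down to $\sum_k SQf_H^\eps(z_k, a \odot b)\,\lambda_\sigma(S_k)$ without knowing anything about $\sigma_y^\infty$ beyond its barycenter. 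From there one builds a comparison Young measure $\hat{\mu}^{(\delta)} \in \BDY^\sing(a \odot b,a)$ via recovery sequences for the symmetric-quasiconvex envelope and Lemma~\ref{lem:artconc}, and concludes. You never mention Kirchheim--Kristensen, and without it (or a genuine synthesis argument, which you do not provide) the proof does not close. Note also that the role you assign to Theorem~\ref{thm:BDpolar} is not quite right: within this proposition the rank-one structure enters only through the $a$-directionality built into the definition of $\Ybf^\sing(a \odot b,a)$, which is what makes Lemma~\ref{lem:artconc} applicable; the deeper use of Theorem~\ref{thm:BDpolar} is upstream, in Lemma~\ref{lem:very_good_blowups}.
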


The preceding proposition is surprising since it says that \emph{every} singular Young measure in $\Ybf^\sing_0(a \odot b,\xi)$ is generated by a sequence of symmetric derivatives of BD-functions.

We first record the following lemma, which is a direct consequence of the main result in~\cite{KirchheimKristensen16}:

\begin{lemma} \label{lem:auto_Jensen}
Let $\mu \in \Mcal_1(X;\Rdds)$ be a probability measure with barycenter $[\mu] = \dpr{\id,\mu} = a \odot b$ for some $a,b \in \R^d$, and let $h \in \Crm(\Rdds)$ be positively $1$-homogeneous and symmetric-quasiconvex. Then, the Jensen-inequality
\[
  h(a \odot b) = h(\dpr{\id,\mu}) \leq \dpr{h,\mu}.
\]
holds.
\end{lemma}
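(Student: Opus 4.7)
The plan is to use the observation of Kirchheim and Kristensen cited in the excerpt to upgrade the symmetric-quasiconvexity of $h$ to genuine convexity on $\Rdds$, after which the claimed Jensen-type inequality becomes a direct application of the classical Jensen inequality for probability measures.

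First, I would recall that by Proposition~3.4 of Fonseca--M\"uller (already quoted in Section~\ref{sc:setup} of the excerpt), any symmetric-quasiconvex $h$ is convex along every symmetric rank-one line $\R(a \odot b)$ with $a,b \in \R^d \setminus \{0\}$. Combined with the hypothesis that $h$ is positively $1$-homogeneous, this places $h$ in the class considered by Kirchheim and Kristensen, whose main result asserts that a positively $1$-homogeneous function which is convex along all (symmetric) rank-one directions is in fact convex on the whole ambient space. Applied to our $h$, this yields that $h \colon \Rdds \to \R$ is a convex function in the ordinary sense.

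Once ordinary convexity of $h$ is established, the conclusion is immediate: since $\mu \in \Mcal_1(\Rdds)$ is a probability measure with barycenter $[\mu] = \dprb{\id,\mu} = a \odot b$, the classical Jensen inequality gives
\[
  h(a \odot b) \;=\; h\bigl(\dprb{\id,\mu}\bigr) \;\leq\; \dprb{h,\mu},
\]
which is exactly the desired estimate. Note that the inequality is in fact an equality if one further restricts to measures supported on $\R(a \odot b)$, by positive $1$-homogeneity, but this refinement is not needed here.

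The only point that requires any care is verifying that the Kirchheim--Kristensen theorem, originally formulated for rank-one convex functions on arbitrary matrix spaces $\R^{m \times n}$, transfers verbatim to the symmetric-quasiconvex setting on $\Rdds$ with symmetric rank-one directions $a \odot b$ replacing arbitrary rank-one directions $a \otimes b$. This is essentially the only non-routine step; once it is noted, the rest of the argument is purely formal (Jensen applied to a convex integrand).
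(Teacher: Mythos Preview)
Your approach is the same as the paper's in spirit---reduce to a Jensen-type inequality via the Kirchheim--Kristensen result---but you overstate what that result actually gives. The theorem of Kirchheim--Kristensen (recorded in the paper as Theorem~\ref{thm:KirchheimKristensen}) does \emph{not} assert that a positively $1$-homogeneous (symmetric-)rank-one convex function is globally convex on $\Rdds$; it only asserts that such a function is \emph{convex at} each matrix of the form $a \odot b$, meaning there exists an affine $g \colon \Rdds \to \R$ with $h(a \odot b) = g(a \odot b)$ and $h \geq g$ everywhere. Global convexity is in general false for positively $1$-homogeneous rank-one convex functions once $\min(m,n) \geq 2$, so your second paragraph, as written, claims something that does not hold.

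Fortunately the weaker statement is exactly what is needed, and the paper's proof proceeds precisely this way: given the supporting affine function $g$ at $a \odot b$, one has for any probability measure $\mu$ with barycenter $a \odot b$,
\[
  h(a \odot b) = g(a \odot b) = g\bigl(\dprb{\id,\mu}\bigr) = \dprb{g,\mu} \leq \dprb{h,\mu},
\]
using only that $g$ is affine and $h \geq g$. So your argument becomes correct once you replace ``$h$ is convex on the whole ambient space'' by ``$h$ admits a supporting affine minorant touching at $a \odot b$''. The transfer of the Kirchheim--Kristensen result to the symmetric setting that you flag in your last paragraph is indeed the version the paper records in Theorem~\ref{thm:KirchheimKristensen}.
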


To prove this result we recall a version of the main result of~\cite{KirchheimKristensen16}:

\begin{theorem} \label{thm:KirchheimKristensen}
Let $h^\infty \colon \Rdds \to \R$ be positively one-homogeneous and symmetric quasiconvex. Then, $h^\infty$ is convex at every matrix $a \odot b$ for $a, b \in \R^d$, that is, there exists an affine function $g \colon \Rdds \to \R$ with
\[
  h^\infty(a \odot b) = g(a \odot b)  \qquad\text{and}\qquad
  h^\infty \geq g.
\]
\end{theorem}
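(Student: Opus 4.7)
The statement is the symmetric-gradient analogue of the main result of \cite{KirchheimKristensen16}, in which the same conclusion is proved for positively $1$-homogeneous rank-one convex functions on general matrices $\R^{m \times n}$ with the symmetrized tensor $a \odot b$ replaced by an honest rank-one matrix $a \otimes b$. My plan is to follow their argument, exploiting the fact (recorded after the definition of symmetric-quasiconvexity in Section~\ref{sc:setup}) that every symmetric-quasiconvex function is convex along every line of the form $\R(c \odot d)$. Thus the symmetric rank-one tensors $c \odot d$ play the same structural role for symmetric-quasiconvexity that ordinary rank-one tensors play for rank-one convexity in the gradient case.

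\textbf{Step 1 (Normalization).} I would first subtract a linear functional to reduce to $h^\infty(a \odot b) = 0$, and reformulate the goal as the construction of a linear $g \colon \Rdds \to \R$ satisfying $g(a \odot b) = 0$ and $h^\infty \geq g$ on all of $\Rdds$.

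\textbf{Step 2 (Supporting slopes along symmetric rank-one lines).} For every $c, d \in \R^d$, the one-variable map $t \mapsto h^\infty(a \odot b + t(c \odot d))$ is convex in $t$ (by convexity of $h^\infty$ along $\R(c \odot d)$) and, after Step~1, vanishes at $t = 0$; hence it admits a supporting slope $\ell_{c \odot d} \in \R$ at the origin. The core question becomes whether the family of slopes $\{\ell_{c \odot d}\}_{c,d}$ is \emph{compatible}, that is, whether there exists a single linear functional $g$ on $\Rdds$ with $g(c \odot d) = \ell_{c \odot d}$ for all $c,d$ and with $g \leq h^\infty$ globally.

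\textbf{Step 3 (Compatibility via truncation and $1$-homogeneity).} This is the heart of the Kirchheim--Kristensen argument and the main obstacle. The strategy is to truncate $h^\infty$ to a bounded function that still satisfies convexity along symmetric rank-one lines, use a Hahn--Banach-type extension on the convex hull of the symmetric rank-one skeleton through $a \odot b$ to build an affine minorant for the truncation, and then pass to the limit as the truncation height tends to infinity. Positive $1$-homogeneity is essential: it injects the rigidity that forces the slopes along different symmetric rank-one lines to combine into a single global linear functional, because $h^\infty$ is determined everywhere by its values on a single sphere. The main difficulty I anticipate is precisely this compatibility step, since symmetric-quasiconvexity by itself is too weak to yield an affine minorant by a direct convex-analytic argument, and without $1$-homogeneity no affine minorant need exist. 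Because the Kirchheim--Kristensen proof uses rank-one convexity only through its one-dimensional directional convexity, the argument transfers to the symmetric setting after replacing rank-one lines by lines $\R(c \odot d)$ throughout.
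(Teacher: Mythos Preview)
The paper does not give its own proof of this theorem: it simply records it as ``a version of the main result of~\cite{KirchheimKristensen16}'' and cites that reference. So there is nothing to compare at the level of argument, and your outline is a faithful sketch of how the Kirchheim--Kristensen proof would run in the symmetric setting.

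That said, there is a shorter route than re-running the entire Kirchheim--Kristensen machinery intrinsically on $\Rdds$. Define $\tilde{h}\colon \R^{d\times d}\to\R$ by $\tilde{h}(M):=h^\infty\bigl((M+M^T)/2\bigr)$. This $\tilde{h}$ is positively $1$-homogeneous, and it is rank-one convex on $\R^{d\times d}$ because $t\mapsto\tilde{h}(M+t\,\xi\otimes\eta)=h^\infty\bigl(\mathrm{sym}(M)+t\,\xi\odot\eta\bigr)$ is convex by the very directional convexity you invoke. The original Kirchheim--Kristensen theorem then gives a linear $\tilde{g}$ on $\R^{d\times d}$ with $\tilde{g}\le\tilde{h}$ and $\tilde{g}(a\otimes b)=\tilde{h}(a\otimes b)=h^\infty(a\odot b)$. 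Since $\tilde{h}$ is constant along skew-symmetric directions and $\tilde{g}$ touches $\tilde{h}$ at $a\otimes b$, the linear minorant $\tilde{g}$ must vanish on skew matrices; hence $\tilde{g}(a\odot b)=\tilde{g}(a\otimes b)=h^\infty(a\odot b)$, and the restriction $g:=\tilde{g}|_{\Rdds}$ does the job. This reduction is presumably why the paper is content with a bare citation; your intrinsic adaptation is correct but does more work than necessary.
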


\begin{proof}[Proof of Lemma~\ref{lem:auto_Jensen}]
By the preceding theorem, $h$ is actually \emph{convex} at matrices $a \odot b$, that is, the Jensen inequality holds for measures with barycenter $a \odot b$, such as our $\mu$.
\end{proof}

The following lemma on \enquote{artificial concentrations} will be crucial in the sequel:

\begin{lemma} \label{lem:artconc}
Let $\nu \in \BDY(S)$, where for some $z_0 \in Q_a$, $R > 0$, $a \in \Sbb^{d-1}$,
\[
  S = S(z_0,R):= \setb{ x \in Q_a }{ \abs{(x-z_0)\cdot a} < R }.
\]
Assume further that there exists a sequence $(v_j) \subset \BD(S)$ with $Ev_j \toY \nu$ and $v_j(x) = bg(x \cdot a)$ on $\partial S$ for some $b \in \R^d$ and $g \in \BV(\R)$. Then, there exists $\hat{\nu} \in \BDY^\sing(a \odot b,a)$ such that
\begin{equation} \label{eq:artconc_eq}
  \ddprb{\ONE \otimes h,\nu} = \ddprb{\ONE \otimes h, \hat{\nu}}.
\end{equation}
for all positively $1$-homogeneous $h \in \Crm(\Rdds)$. The condition on the generating sequence is in particular satisfied if $[\nu] = Eu$ for some $u \in \BD(S)$ with the property that $u(x) = b g(x \cdot a)$ on $\partial S$.
\end{lemma}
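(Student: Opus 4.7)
The plan is to produce $\hat\nu$ as the Young measure of a sequence $(\hat w_k)\subset \BD(Q_a)$ obtained by isotropically compressing the $v_j$ in the $a$-direction, tiling the resulting small copies laterally, and gluing with a constant extension of the boundary datum. After a rotation and translation we may assume $a=\ee_1$ and $z_0=0$, so that $S=(-R,R)\times Q'$ with $Q':=(-1/2,1/2)^{d-1}$. By Lemma~\ref{lem:good_genseq}, we may moreover take $(v_j)\subset \BD(S)\cap \Crm^\infty(S;\R^d)$ with $v_j|_{\partial S}=bg(x\cdot a)$. For each $k\in\N$, partition the thin slab $\tilde S_k:=(-R/k,R/k)\times Q'$ into $N_k:=k^{d-1}$ isotropically rescaled tiles $T_{k,l}:=a_{k,l}+k^{-1}S$ with centres $\{a_{k,l}\}$ arranged on a regular grid in $\{0\}\times Q'$, and set
\[
\hat w_{j,k}(x):=\begin{cases}
v_j\bigl(k(x-a_{k,l})\bigr) & x\in T_{k,l},\\
g\bigl(\operatorname{sgn}(x_1)\,R\bigr)\,b & x\in Q_a\setminus \tilde S_k.
\end{cases}
\]
The one-directional structure of $v_j|_{\partial S}$ forces continuity across every tile interface (both sides evaluate to $g(kx_1)\,b$) and across $\partial\tilde S_k$ (both sides equal $g(\pm R)\,b$), so $\hat w_{j,k}\in \BD(Q_a)$ and its symmetric derivative carries no additional singular part.

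The crucial computation uses that the isotropic rescaling satisfies $\Ecal[v(k\,\sbullet\,)]=k\,(\Ecal v)(k\,\sbullet\,)$: combining this with a change of variables and the positive $1$-homogeneity of $h$ yields
\[
\int_{Q_a} h(\Ecal\hat w_{j,k})\dd x
=\sum_{l=1}^{k^{d-1}}\int_{T_{k,l}} k\,h\bigl((\Ecal v_j)(k(x-a_{k,l}))\bigr)\dd x
=\int_S h(\Ecal v_j)\dd y,
\]
as the factors $k^{d-1}\cdot k\cdot k^{-d}$ cancel exactly. In particular $\abs{E\hat w_{j,k}}(Q_a)=\abs{Ev_j}(S)$ is uniformly bounded, and a diagonal choice $j=j(k)\to\infty$, guided by Lemma~\ref{lem:dense} on a countable dense family of positively $1$-homogeneous integrands and by the compactness of Lemma~\ref{lem:compact}, produces $\hat\nu\in \BDY(Q_a)$ with $E\hat w_{j(k),k}\toY\hat\nu$ and $\ddprb{\ONE\otimes h,\hat\nu}=\ddprb{\ONE\otimes h,\nu}$ for every positively $1$-homogeneous $h\in\Crm(\Rdds)$, which is exactly \eqref{eq:artconc_eq}.

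It then remains to check that $\hat\nu\in\BDY^\sing(a\odot b,a)$. Because $\Ecal\hat w_{j,k}$ is supported on $\tilde S_k$ with $\abs{\tilde S_k}\to 0$, the Lebesgue part satisfies $\hat\nu_y=\delta_0$ at $\Lcal^d$-a.e.\ $y$, so $\hat\nu\in\Ybf^\sing(Q_a)$. The tiling periodicity in $Q'$ with vanishing period $1/k$ passes to the limit as invariance of both $\lambda_{\hat\nu}$ and $y\mapsto\hat\nu^\infty_y$ under translations perpendicular to $a$, which gives the $a$-directionality. Finally, applying the above action identity to the linear integrands $h_{ij}(A):=A_{ij}$ together with the divergence-theorem computation $[\nu](\cl S)=(g(R)-g(-R))(a\odot b)$---which uses only the boundary trace $v_j|_{\partial S}=bg(x\cdot a)$---yields $[\hat\nu]=(a\odot b)\,\lambda_{\hat\nu}$. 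The main technical obstacle is the exact cancellation in the scaling identity: it hinges on the \emph{isotropic} form of the rescaling (anisotropic compressions would scale different components of $\Ecal$ by different powers of $k$ and destroy the identity) and on the strictly one-directional form of the boundary data, which alone permits tile-matching without any staircase correction.
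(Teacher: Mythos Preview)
Your plan is correct and follows the same core idea as the paper: compress the generating sequence into an arbitrarily thin slab so that all the mass becomes concentration, while the positive $1$-homogeneity of $h$ makes the pairing $\ddprb{\ONE\otimes h,\,\cdot\,}$ scale-invariant. The only real difference is in how the directionality and homogeneity of $\hat\nu$ are obtained: the paper performs a single compression and then invokes the Averaging Lemma (Lemma~\ref{lem:averaging}) to force $\lambda_{\hat\nu}$ to be a multiple of Lebesgue measure and $y\mapsto\hat\nu^\infty_y$ to be constant, whereas you build in the perpendicular averaging by tiling $k^{d-1}$ isotropically rescaled copies across $Q'$ and then argue the invariance directly from the vanishing period. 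Both routes work; yours is more explicit about why the change-of-variables factors cancel, while the paper's is shorter because the Riemann--Lebesgue step is packaged into Lemma~\ref{lem:averaging}.

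One point to tighten: your divergence-theorem computation gives only the \emph{total} $[\hat\nu](\cl{Q_a})=(g(R)-g(-R))(a\odot b)$, and this alone does not yield the pointwise identity $[\hat\nu]=(a\odot b)\lambda_{\hat\nu}$ required by the definition of $\Ybf^\sing(a\odot b,a)$. You need that $y\mapsto\hat\nu^\infty_y$ is \emph{constant} $\lambda_{\hat\nu}$-a.e., not merely $a$-directional. This does follow from your construction (the vanishing-period tiling forces $\dprb{h,\hat\nu^\infty_y}\lambda_{\hat\nu}(\mathrm{d}y)$ to be a constant multiple of $\Hcal^{d-1}\restrict(\{0\}\times Q')$ for every $1$-homogeneous $h$, by a Riemann--Lebesgue argument), but it deserves an explicit sentence; alternatively, simply apply Lemma~\ref{lem:averaging} to your $\hat\nu$ at the very end, exactly as the paper does, and the issue disappears.
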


\begin{proof}
Note that by Lemma~\ref{lem:good_genseq} if $[\nu] = Eu$ for some $u \in \BD(S)$ with the property that $u(x) = b g(x \cdot a)$ on $\partial S$ there always exists $(u_j) \subset \BD(S) \cap \Crm^\infty(S;\R^d)$ with $\Ecal u_j \toY \nu$ and $u_j(x) = bg(x \cdot a)$ on $\partial S$. Hence the second part of the statement follows from the first.

Up to a translation and a one-dimensional scaling, we can assume  that $x_0=0$ and $R=1$. Let
\[
  S(z_0,r) := \setb{ x \in Q_a }{ \abs{(x-z_0)\cdot a} < r },  \qquad
  z_0 \in Q_a, \; r > 0,
\]
and define $w_j\in BD(S)$ by
\[
  w_j(x) := \begin{cases}
    u_j(jx)  &\text{if $x \in S(0,1/j)$,}\\
    bg(-1)  &\text{if $x \cdot a \leq -1/j$,}\\
    bg(+1)  &\text{if $x \cdot a \geq 1/j$,}
  \end{cases}
  \qquad
  x \in Q_a,
\]
where $g(\pm 1)$ is defined in the sense of trace. It can be seen that $\Ecal w_j$ generates a Young measure $\mu \in \BDY(Q_a)$ with $\mu_x = \delta_0$ almost everywhere since $\Ecal w_j \to 0$ in measure. Furthermore, for all positively $1$-homogeneous $h \in \Crm(\Rdds)$, we get from the $a$-directionality of $g$ that
\begin{align*}
  \ddprb{\ONE \otimes h, \mu} &= \lim_{j\to\infty} \int_{S(0,1/j)} jh(\Ecal u_j(jx)) \dd x\\
  &= \lim_{j\to\infty} \int_{Q_a} h(\Ecal u_j) \dd x\\
  &= \ddprb{\ONE \otimes h, \nu}.
\end{align*}
Now apply the averaging principle, Lemma~\ref{lem:averaging}, to $\mu$ to get $\hat{\nu} \in \BDY^\sing(a \odot b,a)$ with the property~\eqref{eq:artconc_eq}. Indeed, the maps $y \mapsto \hat{\nu}_y^\infty$, $x \mapsto \hat{\nu}_y^\infty$ are constant (a.e.) and $\lambda_{\hat{\nu}}$ is a multiple of Lebesgue measure, so it only remains to check that $[\hat{\nu}] = (a \odot b) \lambda_{\hat{\nu}}$. To see the latter result, it suffices to observe using an integration by parts that ($n$ being the outward unit normal)
\begin{align*}
  [\hat{\nu}] &= \ddprb{\ONE \otimes \id, \mu} \\
  &= \lim_{j\to\infty} \int_{Q_a} \Ecal w_j \dd x \\
  &= \lim_{j\to\infty} \int_{\partial Q_a} w_j \odot n \dd x \\
  &= b \bigl( g(+1)-g(-1) \bigr) \odot a.
\end{align*}
Since also $\lambda_{\hat{\nu}}$ is a multiple of Lebesgue measure, we conclude $[\hat{\nu}] = (a \odot b) \lambda_{\hat{\nu}}$.
\end{proof}

\begin{remark}\label{rem:BDpolar_necessary}
The preceding result is in fact the reason why we need the singular structure theorem in BD, Theorem~\ref{thm:BDpolar}, as opposed to a mere rigidity result as in~\cite{Rindler11}. Indeed, the above Lemma will  play a key role in the proof of Proposition~\ref{prop:local_sing} and in order to apply it  one is forced to require in the definition of $\Ybf^\sing(a\odot b,a)$ the $a$-directionality of $\lambda_\sigma$. Lemma~\ref{lem:very_good_blowups}, which relies on Theorem~\ref{thm:BDpolar}, then  implies that it is not restrictive to consider tangent Young measure lying in  $\Ybf^\sing(a\odot b,a)$, see  Step~2 in the proof of Proposition~\ref{prop:final}.
\end{remark}

We can now turn to the main aim of this section:

\begin{proof}[Proof of Proposition~\ref{prop:local_sing}]
Without loss of generality we assume that $\xi = a$.

\proofstep{Step~1.}
We only need to show that for $\sigma \in \Ybf^\sing_0(a \odot b,a)$ we also have $\sigma \in \BDY^\sing_0(a \odot b,a)$ for all $a,b \in \R^d \setminus \{0\}$. Like in Proposition~\ref{prop:local_reg}, we will employ the Hahn--Banach theorem to show that for any weakly*-closed affine half-space $H \subset \Ebf^\sing(a)^*$ with $\BDY^\sing(a \odot b,a) \subset H$ it holds that $\sigma \in H$. Then, since $\BDY^\sing(a \odot b,a)$ is weakly* closed and convex by Lemma~\ref{lem:BDYregsing_convex}, it will follow that $\sigma \in \BDY^\sing(a \odot b,a)$. There exists $f_H \in \Ebf^\sing(a)$ and $\kappa \in \R$ such that
\[
  H = \setb{ G \in \Ebf^\sing(a)^* }{ G(f_H) \geq \kappa }.
\]
Since we assumed $\BDY^\sing(a \odot b,a) \subset H$, we have in particular
\[
  \ddpr{f_H, \nu} \geq \kappa  \qquad
  \text{for all $\nu \in \BDY^\sing(a \odot b,a)$.}
\]
We need to show that $\ddpr{f_H, \sigma} \geq \kappa$ in order to conclude that $\sigma \in H$.

\proofstep{Step~2.}
Fix $\eps, \delta > 0$. We define
\[
  f_H^\eps(x,A) := f_H(x,A) + \eps \abs{A},  \qquad
  x \in Q_a,\; A \in \Rdds,
\]
which lies in $\Ebf^\sing(a)$. Next, subdivide $Q_a$ into slices $S_1, \ldots, S_n$ along the $a$-axis ($a$ orthogonal to the \enquote{long} face of the slices), that is, the $S_k$ are of the form
\[
  S_k = S(z_k,r_k) := \setb{ x \in Q_a }{ \abs{(x-z_k)\cdot a} < r_k }
\]
for some $z_k \in Q_a$, $r_k > 0$. Assume furthermore that the $S_k$ are chosen in such a way that
\begin{equation} \label{eq:RfH_cont}
  \absb{Rf_H^\eps(x,A) - Rf_H^\eps(y,A)} \leq \delta  \qquad
  \text{for all $x,y \in S_k$, $A \in \cl{\Bdds}$,}
\end{equation}
since by assumption $Rf_H^\eps$ is uniformly continuous and one-directional, where $R$ is defined~\eqref{eq:R}. Moreover, we can require $\lambda_\sigma(\partial S_k) = 0$ for all $k=1,\ldots,n$. We will show that in each $S_k$ there exists a point $z_k$ at which the following properties hold for the symmetric-quasiconvex hull $SQf_H^\eps(z_k,\frarg)$ of $f_H^\eps(z_k,\frarg)$:
\begin{enumerate}[(A)]
  \item $SQf_H^\eps(z_k,\frarg)$ is finite, symmetric-quasiconvex, and positively $1$-homogeneous.
  \item There exists a \enquote{recovery sequence} $(\psi^{(k)}_j)_j \subset \Wrm^{1,\infty}_{b(x\cdot a)}(S_k;\Rdds)$ such that
\begin{equation} \label{eq:psi_SQfH} \qquad
  \dashint_{S_k} f_H^\eps(z_k, \Ecal\psi^{(k)}_j(x)) \dd x \to SQf_H^\eps(z_k,a \odot b)
\end{equation}
and such that for a constant $C_\eps$, which is independent of $\delta$, it holds that
\begin{equation} \label{eq:psi_bound} \qquad
  \supmod_j \norm{\Ecal \psi^{(k)}_j}_{\Lrm^1} \leq C_\eps \abs{S_k}.
\end{equation}
\end{enumerate}

For the finiteness in~(A), by standard arguments for (symmetric-)quasiconvex functions, see the appendix of~\cite{KinderlehrerPedregal91}, we need only show $SQf_H(z_k,a \odot b) > -\infty$ (at one point only). To see $SQf_H(z_k,a \odot b) > -\infty$, assume to the contrary that there exists an open slice
\[
  S(z_0,r) := \setb{ x \in Q_a }{ \abs{(x-z_0)\cdot a} < r },  \qquad
  z_0 \in Q_a, \; r > 0,
\]
with the property that $SQf_H(z,a \odot b) = -\infty$ for all $z \in S(z_0,r)$ (recall that $f_H$, hence also $SQf_H$, by definition is $a$-directional). By definition then we can find $\psi_z \in \Wrm^{1,\infty}_{b(x\cdot a)}(Q_a;\R^d)$ with
\[
  \dashint_{Q_a} f_H(z,\Ecal \psi_z(y)) \dd y < \frac{\kappa}{\abs{S(z_0,r)}} - 1.
\]
Indeed, there exists $\tilde{\psi}_z \in \Wrm^{1,\infty}_0(Q_a;\R^s)$ such that
\[
  \dashint_{Q_a} f_H(z,a \odot b + \Ecal \tilde{\psi}_z(y)) \dd y < \frac{\kappa}{\abs{S(z_0,r)}} - 1.
\]
Then, the assertion follows with $\psi_z(x) := \tilde{\psi}_z(x) + b(x\cdot a)$.

Furthermore, we can assume that the map $z \mapsto \psi_z$ depends only on $z \cdot a$ (by the $a$-directionality of $f_H$), and that by the uniform continuity of $Rf_H$ at each $z \in S(z_0,r)$ there exists $\eta(z) > 0$ such that
\[
  \abs{f_H(x,A) - f_H(z,A)} \leq \frac{1 + \abs{A}}{1 + \norm{\Ecal \psi_z}_{\Lrm^1}} \qquad
  \text{for all $x \in S(z,\eta(z))$, $A \in \Rdds$.}
\]
Now use the Vitali covering theorem (in $\R$) to cover $\Lcal^d$-almost all of $S(z_0,r)$ with slices $S_i = S(z_i,r_i)$ such that $r_i < \eta(z_i)$ ($i \in \N$). The generalized Riemann--Lebesgue lemma, Corollary~\ref{cor:Riemann_Lebesgue}, then allows us to find $\mu_i \in \BDY(S_i)$ with underlying deformation $b(x \cdot a)$ and
\[
  \ddprb{f_H,\mu_i} = \int_{S_i} \dashint_{Q_a} f_H(x,\Ecal \psi_{z_i}(y)) \dd y \dd x.
\]
Thus, glueing these $\mu_i$ together and applying Lemma~\ref{lem:artconc} separately in each $S_i$, we get $\mu \in \BDY^\sing(a \odot b,a)$ such that
\begin{align*}
  \ddprb{f_H, \mu} &= \sum_{i=1}^\infty \ddprb{f_H|_{S_i}, \mu_i} \\
  &= \sum_{i=1}^\infty \int_{S_i} \dashint_{Q_a} f_H(x,\Ecal \psi_{z_i}(y)) \dd y \dd x \\
  &\leq \sum_{i=1}^\infty \biggl( \dashint_{Q_a} f_H(z_i,\Ecal \psi_{z_i}(y)) \dd y + 1 \biggr) \abs{S_i} \\
  &< \kappa,
\end{align*}
in contradiction to $\ddpr{f_H,\mu} \geq \kappa$ since $\mu \in H$.

The symmetric-quasiconvexity and the positive $1$-homogeneity of $SQf_H^\eps(z_k,\frarg)$ are now easy to see by standard techniques, see for instance the appendix to~\cite{KinderlehrerPedregal91}, which concerns quasiconvexity, but the methods adapt.

To show~(B), we first recall that from the fact that for separately convex functions an upper $p$-growth bound also implies a lower $p$-growth bound (with a different constant), where $1 \leq p < \infty$, see Lemma~2.5 in~\cite{Kristensen99}. Thus, it follows that
\begin{equation} \label{eq:SQfH_bound}
  \abs{SQf_H^\eps(z_k,A)} \leq \tilde{M}(1+\abs{A})
\end{equation}
for some $\tilde{M} = \tilde{M}(d,M)$, which is independent of $z_k$. Let $(\psi_j^{(k)})_j \subset \Wrm^{1,\infty}_{b(x\cdot a)}(S_k;\R^d)$ be a minimizing sequence for
\[
  \psi \mapsto \dashint_{S_k} f_H^\eps(z_k,\Ecal \psi(x)) \dd x, \qquad
  \psi \in \Wrm^{1,\infty}_{b(x\cdot a)}(S_k;\R^d).
\]
By definition, this sequence $(\psi_j^{(k)})_j$ satisfies~\eqref{eq:psi_SQfH}. Further, we may estimate, using the symmetric-quasiconvexity and $SQf_H(z_k,A) + \eps\abs{A} \leq SQf_H^\eps(z_k,A) \leq f_H^\eps(z_k,A)$ that 
\begin{align*}
  &SQf_H(z_k,a \odot b) + \eps \dashint_{S_k} \abs{\Ecal \psi_j^{(k)}(x)} \dd x \\
  &\qquad\leq \dashint_{S_k} SQf_H(z_k,\Ecal \psi_j^{(k)}(x)) + \eps\abs{\Ecal \psi_j^{(k)}(x)} \dd x\\
  &\qquad\leq \dashint_{S_k} f_H^\eps(z_k,\Ecal \psi_j^{(k)}(x)) \dd x\\
  &\qquad\leq SQf_H^\eps(z_k,a \odot b) + 1
\end{align*}
where we have also discarded some leading elements from the sequence $(\psi_j^{(k)})_j$. Thus, by~\eqref{eq:SQfH_bound},
\[
  \int_{S_k} \abs{\Ecal \psi_j^{(k)}(x)} \dd x
  \leq \frac{2 \tilde{M} (1 + \abs{a \odot b}) + 1}{\eps} \abs{S_k}
  =: C_\eps \abs{S_k},
\]
which is~\eqref{eq:psi_bound}.

\proofstep{Step~3.}
Now, with the $z_k$'s chosen in each $S_k$ to satisfy~(A),~(B), we estimate as follows, using~\eqref{eq:RfH_cont} and the Jensen-type inequality from Lemma~\ref{lem:auto_Jensen},
\begin{align}
  \ddprb{f_H,\sigma} &= \ddprb{f_H^\eps,\sigma} - \eps \ddprb{\ONE \otimes \abs{\frarg},\sigma}  \notag\\
  &= \sum_{k=1}^n \int_{S_k} \dprb{f_H^\eps(z,\frarg),\sigma^\infty_z} \dd \lambda_\sigma(z) - \eps \ddprb{\ONE \otimes \abs{\frarg},\sigma}  \notag\\
  &\geq \sum_{k=1}^n \int_{S_k} \dprb{f_H^\eps(z_k,\frarg),\sigma^\infty_z} \dd \lambda_\sigma(z) - (\eps+\delta) \ddprb{\ONE \otimes (1+\abs{\frarg}),\sigma}  \notag\\
  &\geq \sum_{k=1}^n \int_{S_k} \dprb{SQf_H^\eps(z_k,\frarg),\sigma^\infty_z} \dd \lambda_\sigma(z) - (\eps+\delta) \ddprb{\ONE \otimes (1+\abs{\frarg}),\sigma}  \notag\\
  &\geq \sum_{k=1}^n SQf_H^\eps(z_k,a \odot b) \, \lambda_\sigma(S_k) - (\eps+\delta) \ddprb{\ONE \otimes (1+\abs{\frarg}),\sigma}  \label{eq:fH_est}
\end{align}

By~(B), for every $k=1,\ldots,n$ there exists a \enquote{recovery sequence} $(\psi^{(k)}_j)_j \subset \Wrm^{1,\infty}_{b(x\cdot a)}(S_k;\Rdds)$ with $\supmod_j \norm{\Ecal \psi^{(k)}_j}_{\Lrm^1} \leq C_\eps \abs{S_k}$ and
\[
  \dashint_{S_k} f_H^\eps(z_k, \Ecal\psi^{(k)}_j(x)) \dd x \to SQf_H^\eps(z_k,a \odot b).
\]
Now define
\[
  w_j(x) := \psi^{(k)}_j(x) \frac{\lambda_\sigma(S_k)}{\abs{S_k}} + \beta^k b \quad
  \text{if $x \in S_k$,}
\]
where the $\beta^k \in \R$ are chosen to remove any jumps in the definition of $w_j$ (recall that $\lambda_\sigma$ is one-directional and that $\psi^{(k)}_j(x) = b(x \cdot a)$ for $x \in \partial S_k$). It is not difficult to see, using the norm bound~\eqref{eq:psi_bound} from~(B) and Poincar\'{e}'s inequality~\eqref{eq:Poincare_ext}, that after adding some rigid deformations (suppressed in the following) the sequence $(w_j)_j$ is uniformly bounded in $\BD(Q_a)$ and that
\[
  \int_{S_k} f_H^\eps(z_k,\Ecal w_j(x)) \dd x \to SQf_H^\eps(z_k,a \odot b) \lambda_\sigma(S_k)  \qquad
  \text{for $k = 1,\ldots,n$.}
\]
Combining with~\eqref{eq:fH_est}, so far we have shown that
\[
  \ddprb{f_H,\sigma} \geq \lim_{j\to\infty} \sum_{k=1}^n \int_{S_k} f_H^\eps(z_k,\Ecal w_j(x)) \dd x - (\eps+\delta) \ddprb{\ONE \otimes (1+\abs{\frarg}),\sigma}.
\]
Let $\mu^{(\delta)} \in \BDY(Q_a)$ be the BD-Young measure generated by $(\Ecal w_j)_j$ (up to a non-relabeled subsequence). Note that for $\mu^{(\delta)} \restrict S_k$ we have a generating sequence with boundary values $\alpha_k b(x \cdot a) + \beta_k b$ on $\partial S_k$, where $\alpha_k = \lambda_\sigma(S_k) / \abs{S_k}$.
Apply Lemma~\ref{lem:artconc}, separately in each $S_k$, to replace $\mu^{(\delta)}$ by $\hat{\mu}^{(\delta)} \in \BDY^\sing(a \odot b,a)$ with
\[
  \ddprBB{\,\sum_{k=1}^n \ONE_{S_k} \otimes f_H^\eps(z_k,\frarg), \hat{\mu}^{(\delta)}}
  = \lim_{j\to\infty} \sum_{k=1}^n \int_{S_k} f_H^\eps(z_k,\Ecal w_j(x)) \dd x.
\]
Thus,
\begin{align*}
  \ddprb{f_H,\sigma}
  &\geq \ddprBB{\,\sum_{k=1}^n \ONE_{S_k} \otimes f_H^\eps(z_k,\frarg), \hat{\mu}^{(\delta)}} - (\eps+\delta) \ddprb{\ONE \otimes (1+\abs{\frarg}),\sigma} \\
  &\geq \ddprb{f_H^\eps,\hat{\mu}^{(\delta)}} - \delta \ddprb{\ONE \otimes (1+\abs{\frarg}),\hat{\mu}^{(\delta)}} - (\eps+\delta) \ddprb{\ONE \otimes (1+\abs{\frarg}),\sigma} \\
  &\geq \kappa - \delta \ddprb{\ONE \otimes (1+\abs{\frarg}),\hat{\mu}^{(\delta)}} - (\eps+\delta) \ddprb{\ONE \otimes (1+\abs{\frarg}),\sigma}.
\end{align*}
Here, for the last line we used $\ddprb{f_H^\eps,\hat{\mu}^{(\delta)}} \geq \ddprb{f_H,\hat{\mu}^{(\delta)}} \geq \kappa$ since $\hat{\mu}^{(\delta)} \in H$ (recall that $H$ was such that $\BDY^\sing(a \odot b,a) \subset H$). Now, first let $\delta \to 0$, using that the $\hat{\mu}^{(\delta)}$'s are uniformly in the Young measure-sense bounded (since the bound in~\eqref{eq:psi_bound} is independent of $\delta$), and then let $\eps \todown 0$ to arrive at
\[
  \ddprb{f_H,\sigma} \geq \kappa.
\]
Hence, also $\sigma \in H$ and the Hahn--Banach argument described at the beginning of the proof yields the conclusion.
\end{proof}

\subsection{Characterization for zero-barycenter singular blow-ups}

Here, we use the following spaces (where $Q = (-1/2,1/2)^d$ is the unit cube):
\begin{align*}
  \Ebf^\sing(0) &:= \Ebf^\sing(Q;\Rdds), \\
  \Ybf^\sing(0) &:= \setb{ \sigma \in \Ybf^\sing(Q;\Rdds) }{ [\sigma] = 0 }, \\
  \BDY^\sing(0) &:= \Ybf^\sing(0) \cap \BDY^\sing(Q).
\end{align*}
Notice that for $\sigma \in \Ybf^\sing(0)$ we do not require one-directionality of $y \mapsto \sigma_y^\infty$ and $\lambda_\sigma$. We also define the spaces $\Ybf^\sing_0(0;\Rdds)$, $\BDY^\sing_0(0)$ with the additional constraint $\lambda_\sigma(\partial \Omega) = 0$.

\begin{proposition} \label{prop:local_zero}
$\Ybf^\sing_0(0) = \BDY^\sing_0(0)$.
\end{proposition}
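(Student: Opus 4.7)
The strategy is to mimic the Hahn--Banach argument used for Proposition~\ref{prop:local_sing}, adapted to the zero-barycenter setting. The key simplification is that the Kirchheim--Kristensen theorem (Theorem~\ref{thm:KirchheimKristensen}) yields the Jensen inequality essentially for free at $0 = 0\odot 0$, so no delicate construction of near-optimal generating sequences is required. As a first step I would show that $\BDY^\sing_0(0)$ is convex and weakly-$*$ closed in $\Ebf^\sing(0)^*$, in direct analogy with Lemma~\ref{lem:BDYregsing_convex}. Closedness follows from a diagonal argument using Lemma~\ref{lem:good_genseq}: since $[\mu]=0$ for $\mu\in\BDY^\sing_0(0)$, the zero function is an admissible underlying deformation and generating sequences $u_j\in\BD(Q)\cap\Crm^\infty(Q;\R^d)$ can be chosen with $u_j|_{\partial Q}=0$. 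Convexity is proved by first reducing to piecewise-homogeneous averaged Young measures via Lemma~\ref{lem:approx} (applicable because the zero function satisfies condition~(i) of Lemma~\ref{lem:averaging}), then gluing the corresponding zero-boundary-value generating sequences on a partition of $Q$ into two sub-regions of volumes $\theta$ and $1-\theta$; no staircase correction is needed since no jumps arise between pieces.

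Suppose for contradiction that $\sigma\in\Ybf^\sing_0(0)\setminus\BDY^\sing_0(0)$. By Hahn--Banach there exist $f_H\in\Ebf^\sing(0)$ and $\kappa\in\R$ such that $\ddprb{f_H,\mu}\geq\kappa$ for every $\mu\in\BDY^\sing_0(0)$ but $\ddprb{f_H,\sigma}<\kappa$; since the trivial (identically zero) Young measure lies in $\BDY^\sing_0(0)$ and pairs to $0$ with $f_H$, we have $\kappa\leq 0$. For each $y\in Q$ the symmetric-quasiconvex envelope $SQf_H(y,\frarg)$ is positively $1$-homogeneous in its second argument (by the standard rescaling argument) and, by the dichotomy recalled in Section~\ref{sc:setup}, is either finite and symmetric-quasiconvex or identically $-\infty$. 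I would rule out the latter case: if $SQf_H(y_0,\frarg)\equiv-\infty$ for some interior point $y_0\in Q$, then for any $M>0$ one finds $\psi\in\Wrm^{1,\infty}_0(D;\R^d)$ with $\dashint_D f_H(y_0,\Ecal\psi)<-M$, and the localised bump $u_n(x):=n^{d-1}\psi(n(x-y_0))$ (supported in $y_0+D/n\subset Q$ for $n$ large) lies in $\BD(Q)$ with zero boundary, vanishing mean of $\Ecal u_n$, and generates an element of $\BDY^\sing_0(0)$ whose action on $f_H$ tends to $\int_D f_H(y_0,\Ecal\psi)\,dw<-M$, contradicting the finiteness of $\kappa$. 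Hence $SQf_H(y,\frarg)$ is finite at $\lambda_\sigma$-almost every $y$, and 1-homogeneity then forces $SQf_H(y,0)=0$.

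To conclude, I would apply Lemma~\ref{lem:auto_Jensen} with $a=b=0$ to $SQf_H(y,\frarg)$ and the probability measure $\sigma^\infty_y$, which satisfies $\dprb{\id,\sigma^\infty_y}=0$ at $\lambda_\sigma$-a.e.\ $y$ because $[\sigma]=0$. This yields
\[
0 = SQf_H(y,0) \leq \dprb{SQf_H(y,\frarg),\sigma^\infty_y} \leq \dprb{f_H(y,\frarg),\sigma^\infty_y}
\]
$\lambda_\sigma$-almost everywhere, and integrating against $\lambda_\sigma$ gives $\ddprb{f_H,\sigma}\geq 0\geq\kappa$, contradicting $\ddprb{f_H,\sigma}<\kappa$ and therefore forcing $\sigma\in\BDY^\sing_0(0)$. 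The main technical obstacle is the first step (convexity and weak-$*$ closedness), which requires an honest adaptation of the gluing and diagonalisation arguments of Lemma~\ref{lem:BDYregsing_convex}; the ``$SQf_H\equiv-\infty$'' exclusion also needs some care to verify that the localised bump really produces an element of $\BDY^\sing_0(0)$ (in particular, that $\lambda_\mu(\partial Q)=0$, which holds because the concentration mass of the bump accumulates at the interior point $y_0$).
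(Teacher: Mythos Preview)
Your argument is correct and in fact cleaner than what the paper does. The paper's proof of Proposition~\ref{prop:local_zero} simply says ``proceed as in Proposition~\ref{prop:local_sing} with straightforward modifications'': replace the direction $a$ by $\ee_1$, replace the one-directional slices $S_k$ by small cubes $Q_k$ in a regular lattice (since $f_H$ is no longer one-directional), and use the averaging principle in place of Lemma~\ref{lem:artconc} to land the comparison Young measures in $\BDY^\sing(0)$. In particular the paper retains the whole apparatus of the $\eps$-perturbation $f_H^\eps=f_H+\eps\abs{\frarg}$, the recovery sequences $(\psi^{(k)}_j)$ with the uniform $\Lrm^1$-bound, and the passage through $SQf_H^\eps$.

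Your route exploits two features specific to the zero-barycenter case that the paper does not single out. First, the trivial Young measure (with $\lambda=0$) belongs to $\BDY^\sing_0(0)$ and pairs to zero with $f_H$, forcing $\kappa\le 0$. Second, the condition $[\sigma]=0$ together with $\sigma_y=\delta_0$ gives $\dprn{\id,\sigma^\infty_y}=0$ for $\lambda_\sigma$-almost every $y$, so Lemma~\ref{lem:auto_Jensen} applies \emph{pointwise} with $a=b=0$ and yields $0=SQf_H(y,0)\le\dprn{SQf_H(y,\frarg),\sigma^\infty_y}\le\dprn{f_H(y,\frarg),\sigma^\infty_y}$ directly. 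Integrating against $\lambda_\sigma$ closes the argument without ever constructing a near-optimal comparison element of $\BDY^\sing_0(0)$, and hence without any need for the $\eps$-perturbation, the cube subdivision, or an analogue of Lemma~\ref{lem:artconc}. The paper's approach has the advantage of being uniform with the $a\odot b\neq 0$ case, while yours shows that the zero case is genuinely easier. Your bump construction for the finiteness of $SQf_H(y_0,\frarg)$ is also more direct than the paper's version of that step, since the rescaled $\psi$ with zero boundary values immediately produces an element of $\BDY^\sing_0(0)$ concentrated at the interior point $y_0$, with barycenter zero because $\int_D\Ecal\psi=0$.
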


The proof of this fact proceeds essentially in the same way as the proof for Proposition~\ref{prop:local_sing} in the previous section with some straightforward modifications:

\begin{enumerate}[(i)]
\item Wherever a direction $a$ or $\xi$ is needed, we use $a,\xi = \ee_1$.
\item The proof of the analogue of Lemma~\ref{lem:BDYregsing_convex} is exactly the same.
\item In the proof of the Proposition~\ref{prop:local_sing}, we can no longer assume that $f_H$ is one-directional. Thus, we need to replace the slices $S_i$ partitioning $Q_a = Q$ with rescaled cubes $Q_i$ covering $Q$, for instance in a regular lattice; the same holds for the slices $S(z_0,r)$. By averaging via Lemma~\ref{lem:averaging}, we may also conclude that we can get $\mu, \hat{\mu}^{(\delta)} \in \BDY^\sing(0)$.
\end{enumerate}

\section{Proof of Theorem~\ref{thm:BDY_charact}} \label{sc:proof}

First, we remark that the necessity part of our theorem is precisely the assertion of Theorem~4 of~\cite{Rindler11}. It remains to show the \enquote{sufficiency} part:

\begin{proposition}\label{prop:final}
Let $\nu \in \Ybf(\Omega;\Rdds)$ with $[\nu] = Eu$ for some $u \in \BD(\Omega)$. If for all symmetric-quasiconvex $h \in \Crm(\R_\sym^{d \times d})$ with linear growth at infinity the Jensen-type inequality
\begin{equation} \label{eq:Jensen}
  h \biggl( \dprb{\id,\nu_x} + \dprb{\id,\nu_x^\infty} \frac{\di \lambda_\nu}{\di \Lcal^d}(x) \biggr)
    \leq \dprb{h,\nu_x} + \dprb{h^\#,\nu_x^\infty} \frac{\di \lambda_\nu}{\di \Lcal^d}(x)
\end{equation}
holds for $\Lcal^d$-almost every $x \in \Omega$, then $\nu \in \BDY(\Omega)$.
\end{proposition}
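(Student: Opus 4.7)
The plan is to reduce the global generation problem to the local (tangent-Young-measure) characterizations already established, namely Propositions~\ref{prop:local_reg}, \ref{prop:local_sing}, and~\ref{prop:local_zero}, and then glue local generating sequences into a global one via the approximation lemma (Lemma~\ref{lem:approx}) together with a diagonal argument. Concretely, I would first use Lemma~\ref{lem:approx} to write $\nu$ as a weak* limit of piecewise-homogeneous averaged Young measures $\nu_k = \sum_l \overline{\nu \restrict C_{kl}}$. Since $\BDY(\Omega)$ is weakly* closed (by a compactness/diagonal argument using Lemma~\ref{lem:compact} and Lemma~\ref{lem:good_genseq}), it suffices to show that each local piece $\overline{\nu \restrict C_{kl}}$ can be generated by symmetric gradients, provided its parameters come from a point $x_0 \in C_{kl}$ at which the tangent-Young-measure analysis is available; one then stitches the pieces together by Lemma~\ref{lem:averaging}(ii) (in the cuboid / one-directional case) and an elementary boundary correction, much as in Step~2 of the proof of Lemma~\ref{lem:BDYregsing_convex}.

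For $\Lcal^d$-a.e.\ regular point $x_0$, the regular localization principle (Proposition~\ref{prop:localize_reg}) produces a tangent Young measure $\sigma \in \Ybf^\reg(A_0)$ with $A_0 = \dpr{\id,\nu_{x_0}} + \dpr{\id,\nu^\infty_{x_0}} \tfrac{\di \lambda_\nu}{\di \Lcal^d}(x_0)$, and the Jensen inequality~\eqref{eq:Jensen} at $x_0$ transfers, via the explicit action of $\sigma$ on constant cylindrical integrands, to the inequality required by Proposition~\ref{prop:local_reg}; that proposition then gives $\sigma \in \BDY^\reg(A_0)$. For $\lambda_\nu^s$-a.e.\ singular point $x_0$, I would invoke the \emph{very good} singular blow-ups of Lemma~\ref{lem:very_good_blowups}: either $\dpr{\id,\nu^\infty_{x_0}} = 0$ and the tangent Young measure sits in $\Ybf^\sing_0(0)$, where Proposition~\ref{prop:local_zero} gives BD-generation; or $\dpr{\id,\nu^\infty_{x_0}} = \xi \odot \eta \neq 0$ and, thanks to the BD-polar structure theorem (Theorem~\ref{thm:BDpolar}) together with Lemma~\ref{lem:very_good_blowups}, the tangent Young measure can be taken in the one-directional class $\Ybf^\sing_0(\xi \odot \eta, \xi)$ (after absorbing the affine part $\beta(\xi\otimes\eta)x + Rx$ of the underlying deformation, which is harmless because it contributes an affine symmetric gradient that can be added to any generating sequence). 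Then Proposition~\ref{prop:local_sing} yields BD-generation for this tangent Young measure.

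Having tangent BD-generating sequences at a.e.\ point, I would promote them to a generating sequence for each averaged local piece $\overline{\nu \restrict C_{kl}}$ by choosing a single point $x_0 \in C_{kl}$ (of regular or singular type, according to whether $(\Lcal^d + \lambda_\nu^s)$-mass dominates on $C_{kl}$), rescaling the tangent generating sequence onto $C_{kl}$ in the manner of Corollary~\ref{cor:Riemann_Lebesgue} and Lemma~\ref{lem:artconc}, and finally combining across the partition with the staircase construction described after Lemma~\ref{lem:averaging}, which absorbs the jumps introduced between adjacent one-directional pieces. A diagonal extraction along $k \to \infty$, using the countable density Lemma~\ref{lem:dense} and the compactness Lemma~\ref{lem:compact}, then produces a single BD-sequence generating $\nu$.

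The main obstacle is the singular part: even after reducing to a tangent Young measure, Proposition~\ref{prop:local_sing} can only be applied within the \emph{one-directional} class $\Ybf^\sing_0(a \odot b, a)$, so one must perform a double blow-up (tangent of a tangent) to strip the two-dimensional structure coming from Lemma~\ref{lem:good_blowups}(ii) down to a single direction and a residual affine symmetric gradient, with the Jensen inequality and the BD-barycenter property being preserved under both blow-ups. This is exactly what Lemma~\ref{lem:very_good_blowups} accomplishes, and it is the step that genuinely uses Theorem~\ref{thm:BDpolar}; verifying that the Jensen inequality is stable under these iterated rescalings (and that it implies the hypothesis of Proposition~\ref{prop:local_sing}, which in the singular case is automatic by Lemma~\ref{lem:auto_Jensen}) will be the delicate bookkeeping point in the proof.
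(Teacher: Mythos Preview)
Your local analysis matches the paper's: at $\Lcal^d$-a.e.\ regular point the Jensen inequality~\eqref{eq:Jensen} transfers to the regular tangent Young measure and Proposition~\ref{prop:local_reg} applies; at $\lambda_\nu^s$-a.e.\ singular point Lemma~\ref{lem:very_good_blowups} places the tangent in $\Ybf^\sing_0(\xi\odot\eta,\xi)$ (or $\Ybf^\sing_0(0)$) and Proposition~\ref{prop:local_sing} (or~\ref{prop:local_zero}) applies with no Jensen hypothesis needed, thanks to Lemma~\ref{lem:auto_Jensen}. The gap is entirely in the glueing step.

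Your use of Lemma~\ref{lem:approx} is circular: that lemma is stated for $\nu \in \BDY(\Omega)$ (and for an underlying deformation $u$ that is affine on $\partial\Omega$ or $\xi$-directional), which is precisely what you are trying to establish. Even granting a partition into small cells, the averaged piece $\overline{\nu\restrict C_{kl}}$ is \emph{not} the tangent Young measure at any point $x_0 \in C_{kl}$: its oscillation and concentration-direction measures are the integral averages $\dashint_{C_{kl}} \nu_y \dd y$ and $\dashint_{C_{kl}} \nu_y^\infty \dd\lambda_\nu(y)$, not the point values $\nu_{x_0}$, $\nu_{x_0}^\infty$ delivered by the localization principles. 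Hence the local characterizations do not apply to these pieces, and the Jensen inequality need not survive averaging because $h$ is only symmetric-quasiconvex, not convex. The paper avoids this by proving a separate Glueing Lemma (Lemma~\ref{lem:glueing}): once the tangent Young measure at $(\Lcal^d+\lambda_\nu^s)$-a.e.\ point is known to be BD-generated, one covers $\Omega$ via the Morse covering theorem by disjoint small cubes $Q(x_i,r_i)$, rescales the tangent generating sequences back onto these cubes, controls the boundary mismatches via the strict continuity of the BD trace operator, and verifies convergence against the countable separating family of Lemma~\ref{lem:dense}. This construction does not presuppose $\nu\in\BDY(\Omega)$ and is the correct substitute for the approximation lemma here.
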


\begin{proof}
Note that we may additionally assume $\lambda_\nu(\partial \Omega) = 0$ by embedding the problem into a larger domain and extending all involved maps by zero to this larger domain. This introduces an additional singular part, but this does not impinge the validity of~\eqref{eq:Jensen} on the larger domain and nothing needs to be assumed on the singular part.

We argue by considering regular and singular points separately.

\proofstep{Step~1.}
Let $x_0 \in \Omega$ be a regular point, i.e.\ a point where the regular localization principle in Proposition~\ref{prop:localize_reg} holds; this is the case for $\Lcal^d$-almost every point of $\Omega$. From said result we get the existence of a regular tangent Young measure $\sigma \in \Ybf^\reg(P_0)$, where
\[
  P_0 = \dprb{\id, \nu_{x_0}} + \dprb{\id, \nu_{x_0}^\infty} \frac{\di \lambda_\nu}{\di \Lcal^d}(x_0).
\]
We claim that $\sigma$ satisfies the Jensen-type inequality assumed in Proposition~\ref{prop:local_reg}. Indeed,
\begin{align*}
  h(P_0) &= h \biggl( \dprb{\id, \nu_{x_0}} + \dprb{\id, \nu_{x_0}^\infty} \frac{\di \lambda_\nu}{\di \Lcal^d}(x_0) \biggr) \\
  &\leq \dprb{h,\nu_{x_0}} + \dprb{h^\#,\nu^\infty_{x_0}} \frac{\di \lambda_\nu}{\di \Lcal^d}(x_0) \\
  &= \dprb{h,\sigma_y} + \dprb{h^\#,\sigma^\infty_y} \frac{\di \lambda_\sigma}{\di \Lcal^d}(y)
\end{align*}
at $\Lcal^d$-almost every $y$. Here we used~\eqref{eq:Jensen} and the properties of regular blow-ups listed in Proposition~\ref{prop:localize_reg}. Thus, Proposition~\ref{prop:local_reg} yields that
\[
  \sigma \in \BDY^\reg(Q).
\]

\proofstep{Step~2.}
By Lemma~\ref{lem:very_good_blowups} at   $\lambda_\nu^s$-almost every $x_0 \in \Omega$, there exists a singular tangent Young measure $\sigma \in \Ybf^\sing(\xi \odot \eta, \xi)$ for some $\xi \in \Sbb^{d-1}$, $\eta \in \R^d$ and with the properties listed in Proposition~\ref{prop:localize_sing} and such that 
\[
  [\sigma] = [\sigma] \restrict Q_\xi =Ev,  \qquad \text{for some $v \in \BD(Q_\xi)$.}
\]
 In particular, again by Lemma~\ref{lem:very_good_blowups},   there exists $v \in \BD(Q_\xi)$ with $[\sigma] = Ev$ such that 
\[
  v(x) = v_0 + g(x \cdot \xi)\eta + \beta (\xi \otimes \eta)x + Rx,  \qquad \text{$x \in Q_\xi$ a.e.,}
\]
for some $v_0 \in \R^d$, $\beta \in \R$, a function $g \in \BV(\R)$, and a matrix $R \in \R^{d \times d}_\skw$. Furthermore, we have that (by properties of blow-ups, see~Theorem~2.44 in~\cite{AmbrosioFuscoPallara00})
\[
  Ev = P_0 \abs{Ev} = \dprb{\id,\sigma^\infty_y} \lambda_\sigma(\di y) \qquad\text{for}\qquad
  P_0 := \frac{\di E^s u}{\di \abs{E^s u}}(x_0).
\]
In particular, $\dpr{\id,\sigma^\infty_y} = P_0$ for $\lambda_\sigma$-almost every $y \in \R^d$. Note that if $P_0 \neq 0$, then $\lambda_\sigma$ is one-directional since $Ev = [Dg(x \cdot \xi) + \beta](\eta \odot \xi)$ is.

Now, depending on whether $P_0 = 0$ or $P_0 = a \odot b$ (these are the only two possibilities by Lemma~\ref{lem:very_good_blowups}), our $\sigma$ lies either in the space $\Ybf^\sing(P_0,\xi)$ for $\xi \in \{a,b\}$ or in the space $\Ybf^\sing(0)$. Also, we may assume that $\lambda_\sigma(\partial Q_\xi) = 0$ by a simple rescaling argument (similar to the one described in Remark~\ref{rem:good_blowup_remark}). Consequently, by either Proposition~\ref{prop:local_sing} or Proposition~\ref{prop:local_zero}, we infer
\[
  \sigma \in \BDY^\sing(Q_\xi).
\]
Our proposition, and thus Theorem~\ref{thm:BDY_charact}, is now implied by the following lemma.
\end{proof}

\begin{lemma}[Glueing] \label{lem:glueing}
If $\nu \in \Ybf(\Omega;\Rdds)$ has the property that for $(\Lcal^d + \lambda_\nu^s)$-almost every $x \in \Omega$ there exists a (regular or singular) tangent Young measure $\sigma_x \in \BDY(Q_{\xi(x)})$ to $\nu$ at $x$ for some $\xi(x) \in \Sbb^{d-1}$. Then, $\nu \in \BDY(\Omega)$.
\end{lemma}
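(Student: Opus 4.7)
The plan is to construct a generating sequence $(u_j) \subset \BD(\Omega)$ for $\nu$ via a Besicovitch covering argument that locally realises each tangent Young measure by a $\BD$-map, and then extract a diagonal sequence. By the separability Lemma~\ref{lem:dense}, it suffices to show that for every $\varepsilon > 0$ and every finite family $\{f_1,\ldots,f_N\} \subset \Ebf(\Omega;\Rdds)$ there exists $u \in \BD(\Omega)$ with $\absb{\ddprn{f_k,\delta_{Eu}} - \ddprn{f_k,\nu}} < \varepsilon$ for all $k \leq N$. A standard diagonal extraction over $\varepsilon=1/m$ and $N \to \infty$ then produces the desired sequence.

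First, at every point $x_0$ where a tangent Young measure $\sigma_{x_0} \in \BDY(Q_{\xi(x_0)})$ exists, the definition of tangent Young measure (see Proposition~\ref{prop:localize_reg} for regular points, Proposition~\ref{prop:localize_sing} for singular ones) provides a scale $r(x_0) > 0$ so small that, for every cube $Q_{\xi(x_0)}(x_0,r)$ with $r \leq r(x_0)$, the rescaled restriction of $\nu$ is close to $\sigma_{x_0}$ in duality with the integrands $f_k(x_0,\frarg)$; this uses the uniform continuity of $f_k$ in $x$ on $\cl{\Omega}$ together with the Radon--Nikod\'ym theorem for $\lambda_\nu$ with respect to $\Lcal^d + \lambda_\nu^s$. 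Applying the Besicovitch covering theorem for rotated cubes to the Radon measure $\Lcal^d + \lambda_\nu^s$, I would extract a countable disjoint family $\{Q_i := Q_{\xi(x_i)}(x_i,r_i)\}_i$ with $r_i \leq r(x_i)$ covering $(\Lcal^d + \lambda_\nu^s)$-almost all of $\Omega$ and such that $\lambda_\nu(\partial Q_i) = 0$ for every $i$.

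Second, on each $Q_i$ I would invoke $\sigma_{x_i} \in \BDY(Q_{\xi(x_i)})$ together with Lemma~\ref{lem:good_genseq} to pick a smooth $v^{(i)} \in \BD(Q_{\xi(x_i)}) \cap \Crm^\infty(Q_{\xi(x_i)};\R^d)$ with $Ev^{(i)}$ approximating $\sigma_{x_i}$ within error $\varepsilon/2^i$ in duality with the shifted integrands $f_k(x_i,\frarg)$. Rescaling, $u^{(i)}(y) := r_i v^{(i)}(T^{x_i,r_i}(y))$ is defined on $Q_i$ and, after subtracting a rigid deformation using~\eqref{eq:Poincare_ext}, satisfies a uniform $\BD$-bound. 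I would then set $u := \sum_i u^{(i)} \chi_{Q_i}$, extended by zero on the residual null set, so that the interior contribution of $Eu$ on each $Q_i$ reproduces the correct local pairing with each $f_k$ up to the prescribed error.

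The main obstacle is the jump set of $u$ along the boundaries $\partial Q_i$, which could otherwise introduce spurious singular mass invisible to $\nu$. To control this I would use Lemma~\ref{lem:good_genseq}(ii) to prescribe the boundary trace of $v^{(i)}$ to agree with an affine map determined by the barycenter $A_i := \dprb{\id,\sigma_{x_i,y}} + \dprb{\id,\sigma_{x_i,y}^\infty}\,\di\lambda_{\sigma_{x_i}}/\di\Lcal^d$. Since $[\nu] = Eu_0$ would be bounded (working locally on a compactly contained subdomain) and $A_i$ coincides with the blow-up of $[\nu]/\Lcal^d$ or of the singular polar, the traces of neighbouring affine pieces differ by quantities that vanish as $\max_i r_i \to 0$ by standard Lebesgue-point arguments for $\Lcal^d + \lambda_\nu^s$. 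The residual trace mismatches are then absorbed by adding small piecewise rigid deformations, their aggregate contribution to $E^s u$ being controlled by Poincar\'e's inequality and the fact that $\sum_i r_i^{d-1} \cdot (\text{oscillation of barycenters})$ tends to zero along the refinement. Passing finally to the diagonal sequence yields $(u_j) \subset \BD(\Omega)$ with $Eu_j \toY \nu$, proving $\nu \in \BDY(\Omega)$.
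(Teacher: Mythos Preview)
Your overall architecture---cover, locally realise, glue, diagonalise---matches the paper's, but two of your steps contain genuine gaps.

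\textbf{Rescaling at singular points.} You write $u^{(i)}(y) := r_i\, v^{(i)}(T^{x_i,r_i}(y))$ uniformly. This gives the correct normalisation only at regular points, where the blow-up constant is $c_i = r_i^{-d}$. At a singular point the tangent Young measure $\sigma_{x_i}$ arises with the normalisation $c_i = \bigl(\ddprn{\ONE_{Q(x_i,r_i)} \otimes \abs{\frarg},\nu}\bigr)^{-1}$, and the inverse rescaling must be $y \mapsto (r_i^{d-1}c_i)^{-1}\,v^{(i)}\bigl((y-x_i)/r_i\bigr)$ so that the total mass on $Q_i$ reproduces $\lambda_\nu^s(Q_i)$ rather than $r_i^d$. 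With your scaling the singular contribution of the glued map vanishes as the mesh refines, and you cannot recover the concentration part of $\nu$.

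\textbf{Boundary traces.} Fixing the trace of each $v^{(i)}$ to an affine map with matrix $A_i$ does not work. At singular points the underlying deformation of $\sigma_{x_i}$ is not affine: by Lemma~\ref{lem:very_good_blowups} it has the form $v_0 + g(x\cdot\xi)\eta + \beta(\xi\otimes\eta)x + Rx$ with $g \in \BV(\R)$ possibly non-affine, so Lemma~\ref{lem:good_genseq}~(ii) will not let you prescribe an affine trace. More importantly, your claim that ``traces of neighbouring affine pieces differ by quantities that vanish'' is not justified and is in fact false in general: Lebesgue-point convergence of the barycenter density gives no control on $(d{-}1)$-dimensional integrals of trace mismatches summed over a covering whose multiplicity of faces blows up. The paper avoids this entirely by a different mechanism: it fixes the trace of each $v^{(i)}_j$ to equal the trace of the \emph{actual} underlying deformation $v^{(i)}$ of $\sigma_{x_i}$, and then uses that the rescaled blow-ups $u^{(n)}$ of the global underlying deformation $u$ (with $[\nu] = Eu$) converge \emph{strictly} to $v^{(i)}$, whence the trace operator---strictly continuous in $\BD$---forces $\int_{\partial Q_i} \abs{w_k - u}\,\di\Hcal^{d-1} \leq (c_i k)^{-1}$. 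Summing over $i$ and using $\sum_i c_i^{-1} \leq \ddprn{\ONE\otimes\abs{\frarg},\nu} + \abs{\Omega}$ shows $\abs{E^s w_k}(\Omega) \to 0$. This is the missing idea: anchor every local piece to the \emph{same global} $u$, not to local affine approximations of each other.
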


\begin{proof}
\proofstep{Step~1.}
We know that for every tangent Young measure $\sigma$, there exists a sequence of radii $r_n \todown 0$ and a sequence of constants $c_n > 0$ such that $\sigma^{(n)} \toweakstar \sigma$ for
\[
  \ddprb{\phi \otimes h, \sigma^{(n)}} = c_n \ddprBB{\phi \biggl( \frac{\frarg - x_0}{r_n} \biggr) \otimes h, \nu},
\]
where
\[
  c_n = \begin{cases}
    r_n^{-d}  &\text{if $x_0$ is regular,}\\
    \bigl( \ddpr{\ONE_{Q(x_0,r_n)} \otimes \abs{\frarg}, \nu} \bigr)^{-1}  &\text{if $x_0$ is singular.}
  \end{cases}
\]
Here, $Q(x_0,r_n) = x_0 + r_n Q$ and $Q$ generically denotes the unit cube with one face normal to $a$ or $b$ if $x_0$ is a singular point with $a \odot b \neq 0$ (see Lemma~\ref{lem:very_good_blowups}) or the standard unit cube if $x_0$ is a regular point or a singular point with $a \odot b = 0$. We require also that $\lambda_\nu(\partial Q(x_0,r_n)) = \lambda_{\sigma}(\partial Q) = 0$.

We further define
\[
  u^{(n)}(y) := r_n^{d-1} c_n \bigl[u(x_0+r_n y) - [u]_{Q(x_0,r_n)} \bigr],  \qquad 
  y \in Q,
\]
where $[u]_{Q(x_0,r_n)} := \dashint_{Q(x_0,r_n)} u \dd x$. It holds that
\[
  Eu^{(n)} = c_n T^{x_0,r_n}_\# Eu.
\]
where $T^{x_0,r_n}(x) := (x-x_0)/r_n$ and $T^{x_0,r_n}_\# Eu := Eu \circ (T^{x_0,r_n})^{-1} = Eu(x_0 + r_n \frarg)$ is the push-forward of $Eu$ under $T^{x_0,r_n}$. Moreover, we can assume that by properties of blow-ups, see Lemma~3.1 of~\cite{Rindler12YM}, there is $v \in \BD(Q)$ with $[\sigma] = Ev$ and such that
\[
  u^{(n)} \to v  \quad\text{strictly}, \qquad\text{i.e.}\qquad
  u^{(n)} \toweakstar v  \;\;\text{and}\;\;  \abs{Eu^{(n)}}(Q) \to \abs{Ev}(Q),
\]

Next, take a generating sequence $(v_j) \subset \BD(Q) \cap \Crm^\infty(Q;\R^d)$ of $\sigma$ with $v_j|_{\partial Q} = v|_{\partial Q}$ and define
\[
  v_j^{(n)}(x) := \frac{1}{r_n^{d-1} c_n} v_j \biggl( \frac{x-x_0}{r_n} \biggr) + [u]_{Q(x_0,r_n)},  \qquad
  x \in Q(x_0,r_n).
\]
The trace operator in BD is strictly continuous, see Proposition~3.4 in~\cite{Babadjian15}, and $v_j|_{\partial Q} = v|_{\partial Q}$. Hence,
\begin{align*}
  &\int_{\partial Q(x_0,r_n)} \absb{v_j^{(n)} - u} \dd \Hcal^{d-1} \\
  &\qquad = r_n^{d-1} \int_{\partial Q} \absb{r_n^{1-d} c_n^{-1} v_j(y) - u(x_0 + r_n y) + [u]_{Q(x_0,r_n)}} \dd \Hcal^{d-1}(y) \\
  &\qquad = \frac{1}{c_n} \int_{\partial Q} \absb{v - u^{(n)}} \dd \Hcal^{d-1}(y).
\end{align*}
Consequently, since the boundary integral tends to zero as $n \to \infty$, for every $k \in \N$ we may select $N(x_0,k) \in \N$ so large that
\begin{equation} \label{eq:boundary_est}
  \int_{\partial Q(x_0,r_n)} \absb{v_j^{(n)} - u} \dd \Hcal^{d-1}
  \leq \frac{1}{c_n k}
  \qquad\text{for all $n \geq N(x_0,k)$ and all $j$.}
\end{equation}

\proofstep{Step~2.}
Let the set $R \subset \Omega$ contain all regular points in $\Omega$ and let $S \subset \Omega$ contain all singular points. We have $(\Lcal^d + \lambda_\nu)(\Omega \setminus (R \cup S)) = 0$, where we have also assumed that $R,S$ are Borel sets. 

Now, let $\{\phi_\ell \otimes h_\ell\} \subset \Ebf(\Omega;\Rdds)$ be a family of integrands that determine the Young measure convergence as in Lemma~\ref{lem:dense}. It follows from the proof of the regular localization principle, Proposition~\ref{prop:localize_reg}, that every regular $x_0 \in R$ is a Lebesgue point for
\[
  x \mapsto \phi_\ell(x) \biggl( \dprb{h_\ell, \nu_x} + \dprb{h_\ell^\infty, \nu^\infty_x} \frac{\di \lambda_\nu}{\di \Lcal^d}(x) \biggr),
\]
so we may choose $N(x_0,k)$ so large that for all $\ell \leq k$ and $n \geq N(x_0,k)$ it holds that
\begin{equation} \label{eq:glue_Lebesgue1}
  \dashint_{Q(x_0,r_n)} \absBB{ \dprb{h_\ell, \nu_x} + \dprb{h_\ell^\infty, \nu^\infty_x} \frac{\di \lambda_\nu}{\di \Lcal^d}(x) - \dprb{h_\ell, \nu_{x_0}} + \dprb{h_\ell^\infty, \nu^\infty_{x_0}} \frac{\di \lambda_\nu}{\di \Lcal^d}({x_0}) } \dd y \leq \frac{1}{k}.
\end{equation}
Moreover, at every singular $x_0 \in S$ we similarly choose $N(x_0,k)$ large enough so that for all $\ell \leq k$ and $n \geq N(x_0,k)$ we have
\begin{equation} \label{eq:glue_Lebesgue2}
  \dashint_{Q(x_0,r_n)} \absb{ \dprb{h_\ell^\infty, \nu^\infty_x} - \dprb{h_\ell^\infty, \nu^\infty_{x_0}} } \dd \lambda_\nu^s(x) \leq \frac{1}{k}
\end{equation}
since in the proof of the singular localization principle, Proposition~\ref{prop:localize_sing}, it is shown that every singular $x_0 \in S$ is a $\lambda_\nu^s$-Lebesgue point of
\[
  x \mapsto \dprb{h_\ell^\infty, \nu^\infty_x}.
\]

By the Morse covering theorem, see Theorem~5.51 in~\cite{AmbrosioFuscoPallara00}, we can now cover $(\Lcal^d + \lambda_\nu)$-almost all of $\Omega$ with disjoint (rotated) cubes $Q(x_0,r_n)$ as constructed above, where $n \geq N(x_0,k)$ and $r_n \leq 1/k$. Let
\[
  \Omega = \left( \bigcup_{i=1}^\infty Q_i(x_i,r_i) \right) \cup N,  \qquad (\Lcal^d + \lambda_\nu)(N) = 0,
\]
be this cover. We also denote the constructed tangent Young measure at $x_i$ (restricted to $Q_i$) by $\sigma_i \in \BDY(Q_i)$, where $Q_i$ is a unit cube. We can always require in addition that $\lambda_\nu(\partial Q_i) = \lambda_{\sigma_i}(\partial Q_i) = 0$ (the first condition holds for all but countably many radii around every point and the second was already assumed above after a rescaling argument) and
\begin{equation} \label{eq:lambda_close}
  \absb{\lambda_{\sigma_i}(Q_i) - (c_i T^{x_i,r_i}_\# \lambda_\nu^s)(Q_i)} \leq \frac{1}{k},
\end{equation}
where $c_i$ is the rescaling constant corresponding to $r_i$ and $\lambda_\nu^s$ is the singular part of $\lambda_\nu$ with respect to Lebesgue measure. 

Denote furthermore a generating sequence of $\sigma_i$ by $(v^{(i)}_j) \subset \BD(Q_i) \cap \Crm^\infty(\Omega;\R^d)$, for which we additionally require that $v^{(i)}_j|_{\partial Q_i} = v^{(i)}|_{\partial Q_i}$ with $v^{(i)}$ the underlying deformation of $\sigma_i$ as in Step~1.

Now, let $\{\phi_\ell \otimes h_\ell\} \subset \Ebf(\Omega;\Rdds)$ be a family of integrands that determine the Young measure convergence as in Lemma~\ref{lem:dense}. 
Take an index $j(i,k)$ such that
\begin{equation} \label{eq:hl}
  \absBB{\int_{Q_i} h_\ell \bigl( \Ecal v^{(i)}_{j(i,k)}(y) \bigr) \dd y
  - \ddprb{\ONE_{Q_i} \otimes h_\ell, \sigma_i}} \leq \frac{1}{k}  \qquad
  \text{for all $\ell \leq k$.}
\end{equation}

In case that $x_i$ is singular, the above estimate only needs to hold for those $h_\ell$ that are positively $1$-homogeneous. Define
\[
  w_k := \frac{1}{r_i^{d-1} c_i} v^{(i)}_{j(i,k)} \biggl( \frac{x-x_i}{r_i} \biggr) + [u]_{Q(x_i,r_i)} \qquad
  \text{if $x \in Q_i(x_i,r_i)$, $i \in \N$,}
\]
where $[u]_{Q(x_i,r_i)} := \dashint_{Q_i(x_i,r_i)} u \dd x$. Notice that thanks to~\eqref{eq:boundary_est} we have for every $i$ that
\begin{equation} \label{eq:boundary_est2}
  \int_{\partial Q(x_i,r_i)} \absb{w_k - u} \dd \Hcal^{d-1} = \frac{1}{c_i k}
\end{equation}
We may then compute
\[
  Ew_k = \Ecal w_k \, \Lcal^d \restrict \Omega + E^s w_k,
\]
where
\[
  \Ecal w_k = \frac{1}{r_i^d c_i} \Ecal v^{(i)}_{j(i,k)} \biggl( \frac{x-x_i}{r_i} \biggr) \qquad
  \text{if $x \in Q_i(x_i,r_i)$, $i \in \N$.}
\]
Moreover, for the singular part $E^s w_k$ we can estimate, using~\eqref{eq:boundary_est2}, that
\begin{align*}
  \abs{E^s w_k}(\Omega) &\leq \sum_{i=1}^\infty \int_{\partial Q_i(x_i,r_i)} \abs{w_k-u} \dd \Hcal^{d-1} \\
  &\leq \sum_{i=1}^\infty \frac{1}{k c_i} \\
  &\leq \frac{1}{k} \bigl( \ddprb{\ONE \otimes \abs{\frarg}, \nu} + \abs{\Omega} \bigr).
\end{align*}
Here we used that $\sum_i c_i^{-1} \leq \ddprb{\ONE \otimes \abs{\frarg}, \nu} + \abs{\Omega}$ by the definition of the $c_i$'s.

In the following we will show that $Ew_k$ generates our Young measure $\nu$ that we started with. The last estimate implies that we only need to consider the Young measure generated by $\Ecal w_k$ since the singular part asymptotically vanishes. So, take $\phi_\ell \otimes h_\ell$ from the family exhibited above. We get 
\[
  \int_\Omega \phi_\ell(x) h_\ell(\Ecal w_k(x)) \dd x
  = \sum_{i=1}^\infty \int_{Q_i(x_i,r_i)} \phi_\ell(x) h_\ell \biggl( \frac{1}{r_i^d c_i} \Ecal v^{(i)}_{j(i,k)} \biggl( \frac{x-x_i}{r_i} \biggr) \biggr) \dd x.
\]

\proofstep{Step~3.} Let $x_i\in R$ be a regular point. Recall that in this case $r_i^d c_i = 1$. In the following computations $h_\ell$ can be either compactly supported (and in this case $h_\ell^\infty=0$) or positively $1$-homogeneous (and in this case $h_\ell^\infty=h_\ell$). We have for every fixed $\ell \leq k$ that
\begin{align}
  &\int_{Q_i(x_i,r_i)} \phi_\ell(x) h_\ell \biggl( \frac{1}{r_i^d c_i} \Ecal v^{(i)}_{j(i,k)} \biggl( \frac{x-x_i}{r_i} \biggr) \biggr) \dd x  \notag\\
  &\qquad = \int_{Q_i(x_i,r_i)} \phi_\ell(x_i) h_\ell \biggl( \frac{1}{r_i^d c_i} \Ecal v^{(i)}_{j(i,k)} \biggl( \frac{x-x_i}{r_i} \biggr) \biggr) \dd x  \notag\\
  &\qquad = r_i^d \phi_\ell(x_i) \int_{Q_i} h_\ell \bigl( \Ecal v^{(i)}_{j(i,k)}(y) \bigr) \dd y + E_i  \notag\\
  &\qquad = r_i^d \phi_\ell(x_i) \ddprb{\ONE_{Q_i} \otimes h_\ell, \sigma_i} + E_i  \notag\\
  &\qquad = r_i^d \int_{Q_i} \phi_\ell(x_i) \biggl( \dprb{h_\ell, \nu_{x_i}} + \dprb{h_\ell^\infty, \nu^\infty_{x_i}} \frac{\di \lambda_\nu}{\di \Lcal^d}(x_i) \biggr) \dd y + E_i  \notag\\
  &\qquad = \int_{Q_i(x_i,r_i)} \phi_\ell(x) \biggl( \dprb{h_\ell, \nu_x} + \dprb{h_\ell^\infty, \nu^\infty_x} \frac{\di \lambda_\nu}{\di \Lcal^d}(x) \biggr) \dd x + E_i.  \label{75}
\end{align}
Here $E_i$ is an error term that may change from line to line and that can be estimated as
\begin{equation}\label{eq:error estimate}
  |E_i|\le 2C_\ell  \Big(\omega_{\ell}\Big(\frac{1}{k}\Big)+\frac{1}{k}\Big) \int_{Q_i(x_i,r_i)} 1 + \absBB{\Ecal v^{(i)}_{j(i,k)} \biggl( \frac{x-x_i}{r_i} \biggr)} \dd x, \\
\end{equation}
where $ C_\ell=\|\varphi_\ell\|_\infty +\|R h_\ell\|_\infty$,  $\omega_\ell$ is a modulus of continuity for $\varphi_\ell$, and we have exploited~\eqref{eq:glue_Lebesgue1},~\eqref{eq:hl} and  that $r_i\le 1/k$. 

\proofstep{Step~4.} Let $x_i\in S$ be a singular point and let $h_\ell$ be positively $1$-homogeneous. Using~\eqref{eq:glue_Lebesgue2},~\eqref{eq:lambda_close}, we compute for every fixed $\ell \leq k$ that
\begin{align}
  &\int_{Q_i(x_i,r_i)} \phi_\ell(x) h_\ell \biggl( \frac{1}{r_i^d c_i} \Ecal v^{(i)}_{j(i,k)} \biggl( \frac{x-x_i}{r_i} \biggr) \biggr) \dd x   \notag\\
  &\qquad = \int_{Q_i(x_i,r_i)} \phi_\ell(x_i) h_\ell \biggl( \frac{1}{r_i^d c_i} \Ecal v^{(i)}_{j(i,k)} \biggl( \frac{x-x_i}{r_i} \biggr) \biggr) \dd x   \notag\\
  &\qquad = \frac{1}{c_i} \phi_\ell(x_i) \int_{Q_i} h_\ell \bigl( \Ecal v^{(i)}_{j(i,k)}(y) \bigr) \dd y + E_i  \notag\\
  &\qquad = \frac{1}{c_i} \phi_\ell(x_i) \ddprb{\ONE_{Q_i} \otimes h_\ell, \sigma_i} + E_i  \notag\\
  &\qquad = \frac{1}{c_i} \phi_\ell(x_i) \dprb{h_\ell, \nu_{x_i}^\infty} \dd \lambda_{\sigma_i}(Q_i) + E_i  \notag\\
  &\qquad = \frac{1}{c_i} \int_{Q_i} \phi_\ell(x_i) \dprb{h_\ell, \nu_{x_i}^\infty} \dd (c_i T^{x_i,r_i})_\# \lambda_\nu^s)(y) + E_i  \notag\\
  &\qquad = \int_{Q_i(x_i,r_i)} \phi_\ell(x) \dprb{h_\ell, \nu_x^\infty} \dd \lambda_\nu^s(x) + E_i,  \label{77}
\end{align}
where  the error term $E_i$ can be estimated as 
\begin{equation}\label{zencircus}
  E_i =2C_\ell\Big(\omega_{\ell}\Big(\frac{1}{k}\Big)+\frac{1}{k}\Big) \bigg(\ddprb{\ONE_{Q_i(x_i,r_i)} \otimes \abs{\frarg}, \nu}+\int_{Q_i(x_i,r_i)} \absBB{\Ecal v^{(i)}_{j(i,k)} \biggl( \frac{x-x_i}{r_i} \biggr)}\bigg).
\end{equation}
and $C_\ell$ and $\omega_\ell$ are as in~\eqref{eq:error estimate}. Here, we used that $c_i^{-1} = \ddpr{\ONE_{Q_i(x_i,r_i)} \otimes \abs{\frarg}, \nu}$ (cf.~\eqref{eq:cn}) and~\eqref{eq:hl}.

\proofstep{Step~5.}We will now show that 
\begin{equation}\label{fine}
  \int_\Omega \phi_\ell(x) h_\ell(\Ecal w_k(x)) \dd x \to \ddprb{\phi_\ell \otimes h_\ell, \nu} \qquad\text{as $k\to\infty$.}
\end{equation}
We start from 
\begin{align}
  &\int_\Omega \phi_\ell(x) h_\ell(\Ecal w_k(x)) \dd x   \notag\\
  &\qquad = \sum_{x_i \in R} \int_{Q_i(x_i,r_i)} \phi_\ell(x) h_\ell \biggl( \frac{1}{r_i^d c_i} \Ecal v^{(i)}_{j(i,k)} \biggl( \frac{x-x_i}{r_i} \biggr) \biggr) \dd x   \notag\\
&\qquad\qquad+\sum_{x_i \in S} \int_{Q_i(x_i,r_i)}\phi_\ell(x) h_\ell \biggl( \frac{1}{r_i^d c_i} \Ecal v^{(i)}_{j(i,k)} \biggl( \frac{x-x_i}{r_i} \biggr) \biggr) \dd x  \notag\\
&\qquad =: I+II  \label{76}
\end{align}
and we distinguish the cases where $h_\ell$ has compact support or is positively $1$-homogeneous.

In the first case we use~\eqref{75} and the error estimate~\eqref{eq:error estimate} to get 
\begin{equation}\label{bat1}
  I =\sum_{x_i \in R} \int_{Q_i(x_i,r_i)} \phi_\ell(x) \biggl( \dprb{h_\ell, \nu_x} + \dprb{h_\ell^\infty, \nu^\infty_x} \frac{\di \lambda_\nu}{\di \Lcal^d}(x) \biggr) \dd x+E,
\end{equation}
where $E$ can be estimated by 
\[
  |E|\le e_{k}^\ell\bigl[\abs{\Omega} + \ddprb{\ONE \otimes \abs{\frarg}, \nu} + \norm{\Ecal w_k}_{\Lrm^1} \bigr]
\]
and $e_{k}^\ell$ denotes a quantity that goes to  $0$ as $k\to \infty$ and $\ell$ fixed. For the second term we have 
\begin{equation}\label{bat2}
  | II |\le \Lcal^d\left(\bigcup_{x_i \in S} Q_i(x_i,r_i)\right) \cdot \norm{\phi_\ell \otimes h_\ell}_\infty = \hat{e}_{k}^\ell
\end{equation} 
since the union of all $Q_i(x_i,r_i)$ with $x_i \in S$ has asymptotically vanishing Lebesgue measure as $k\to \infty$. Here, again, $\hat{e}_{k}^\ell$ denotes a quantity that goes to  $0$ as $k\to \infty$ and $\ell$ fixed. Thus, combining \eqref{bat1} and \eqref{bat2}  we have shown~\eqref{fine} for $h_\ell$ compactly supported.

Let now $h_\ell$ be positively $1$-homogeneous. By using~\eqref{75} and~\eqref{eq:error estimate} the first term in~\eqref{76}  can be treated as in~\eqref{bat1} to get
\begin{equation}\label{bat3}
I=\sum_{x_i \in R} \int_{Q_i(x_i,r_i)} \phi_\ell(x) \biggl( \dprb{h_\ell, \nu_x} + \dprb{h_\ell^\infty, \nu^\infty_x} \frac{\di \lambda_\nu}{\di \Lcal^d}(x) \biggr) \dd x+e_k^\ell,
\end{equation}
where again $e_{k}^\ell\to 0$ as $k\to \infty$ and $\ell$ fixed. For the second term we note that by~\eqref{77} and~\eqref{zencircus} we have
\begin{equation}\label{bat4}
II=\sum_{x_i \in S} \int_{Q_i(x_i,r_i)} \phi_\ell(x) \dprb{h_\ell, \nu_x^\infty} \dd \lambda_\nu^s(x)+\hat{e}_{k}^\ell,
\end{equation}
where as before $\hat{e}_{k}^\ell\to 0$ as $k\to \infty$ and $\ell$ fixed.  Recalling that $h_\ell=h_\ell^\infty$ by $1$-homogeneity we deduce by~\eqref{bat3} and~\eqref{bat4} that~\eqref{fine} holds also in this case.

\proofstep{Step~6.}
Since  $\norm{\Ecal w_k}_{\Lrm^1}$ is uniformly bounded, up to a subsequence we have $\Ecal w_k \toY \mu \in \BDY(\Omega)$. However, by~\eqref{fine} also
\[
  \ddprb{\phi_\ell \otimes h_\ell, \mu}
  = \lim_{k\to\infty} \int_\Omega \phi_\ell(x) h_\ell(\Ecal w_k(x)) \dd x
  = \ddprb{\phi_\ell \otimes h_\ell, \nu}.
\]
By our choice of $\{\phi_\ell \otimes h_\ell\}_\ell$, from Lemma~\ref{lem:dense} we get $\nu = \mu \in \BDY(\Omega)$, finishing the proof.
\end{proof}

\section{Atomic parts of BD-Young measures} \label{sc:split}

As an application of the characterization theorem, we prove the following splitting result for generating sequences, a generalization of the result from Section~6 in~\cite{Rindler14YM} (the generalization can also be obtained for BV-Young measures).

\begin{theorem} \label{thm:split}
Let $\nu \in \BDY(\Omega)$ with $\lambda_\nu(\partial \Omega) = 0$ and $v \in \BD(\Omega)$. Furthermore, assume that $\nu$ has $E^s v$ as an \emph{atomic part}, that is
\begin{equation} \label{eq:atomic}
  \lambda_\nu^s \otimes \nu^\infty_x - \abs{E^s v} \otimes \delta_{\frac{\di E^s v}{\di \abs{E^s v}}(x)} \geq 0 \quad\text{in $\Mcal(\Omega \times \Rdds)$}.
\end{equation}
Then, there exists a sequence $(w_j) \subset \BD(\Omega) \cap \Crm^\infty(\Omega;\R^d)$ with $Ew_j \toweakstar [\nu] - Ev$ and such that $Ew_j + Ev \toY \nu$.
\end{theorem}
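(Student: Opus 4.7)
The plan is to define a ``residual'' generalized Young measure $\tilde\nu$ obtained by subtracting the atomic part $Ev$ from $\nu$, verify via Theorem~\ref{thm:BDY_charact} that $\tilde\nu$ is itself a BD-Young measure, and then obtain the desired sequence $(w_j)$ as a smooth generating sequence for $\tilde\nu$. Concretely, I will set $\tilde\nu_x := (T_{-\Ecal v(x)})_\# \nu_x$ (the push-forward under the translation $A \mapsto A - \Ecal v(x)$) and
\[
  \lambda_{\tilde\nu} \otimes \tilde\nu^\infty_x := \lambda_\nu \otimes \nu^\infty_x - \abs{E^s v} \otimes \delta_{P(x)},  \qquad P(x) := \frac{\di E^s v}{\di \abs{E^s v}}(x).
\]
The hypothesis~\eqref{eq:atomic} ensures the right-hand side is a nonnegative measure on $\cl{\Omega} \times \partial \Bbb^{d\times d}_\sym$, and disintegration recovers a well-defined $\lambda_{\tilde\nu}$ and family $(\tilde\nu^\infty_x)$. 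A direct computation, using that $\abs{E^s v}$ is $\Lcal^d$-singular, gives $[\tilde\nu] = [\nu] - Ev = E(u - v)$, where $u \in \BD(\Omega)$ is any underlying deformation of $\nu$.

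The core of the argument is verifying the Jensen inequality of Theorem~\ref{thm:BDY_charact} for $\tilde\nu$. For any symmetric-quasiconvex $h \in \Crm(\Rdds)$ of linear growth, the function $\hat h_x(A) := h(A - \Ecal v(x))$ is again symmetric-quasiconvex in $A$ for every fixed $x$, and a direct check shows $\hat h_x^\# = h^\#$ (the constant shift by $\Ecal v(x)$ becomes negligible in the recession limit). Applying~\eqref{eq:Jensen} for $\nu$ to $\hat h_x$: the translation converts $\dprn{\id,\nu_x}$ into $\dprn{\id,\tilde\nu_x}$ and $\dprn{h,\nu_x(\frarg-\Ecal v(x))}$ into $\dprn{h,\tilde\nu_x}$ on the oscillation side, while the Radon--Nikod\'ym identity
\[
  \frac{\di\lambda_{\tilde\nu}}{\di\Lcal^d}(x)\,\tilde\nu^\infty_x = \frac{\di\lambda_\nu}{\di\Lcal^d}(x)\,\nu^\infty_x  \qquad \text{for $\Lcal^d$-a.e.\ $x$,}
\]
(valid because $\abs{E^s v}$ has zero $\Lcal^d$-density) converts both concentration pairings against $\nu^\infty_x$ into the corresponding pairings against $\tilde\nu^\infty_x$. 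This delivers the Jensen inequality for $h$ with respect to $\tilde\nu$, so Theorem~\ref{thm:BDY_charact} yields $\tilde\nu \in \BDY(\Omega)$. Since $\lambda_{\tilde\nu}(\partial\Omega) \leq \lambda_\nu(\partial\Omega) = 0$, Lemma~\ref{lem:good_genseq}(i) then produces a sequence $(w_j) \subset \BD(\Omega) \cap \Crm^\infty(\Omega;\R^d)$ with $Ew_j \toY \tilde\nu$, and in particular $Ew_j \toweakstar [\tilde\nu] = [\nu] - Ev$.

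It remains to verify $E(w_j + v) \toY \nu$. Since each $w_j$ is smooth, $E(w_j + v) = (\Ecal w_j + \Ecal v)\,\Lcal^d + E^s v$, and the decomposition of $\lambda_\nu \otimes \nu^\infty$ that defined $\tilde\nu$ reduces the claim, for each $f \in \Ebf(\Omega;\Rdds)$ and $\phi \in \Crm(\cl{\Omega})$, to
\[
  \int_\Omega \phi(x)\, f(x, \Ecal w_j + \Ecal v) \dd x \to \int_\Omega \phi \dprb{f, \nu_x} \dd x + \int_{\cl{\Omega}} \phi \dprb{f^\infty, \tilde\nu^\infty_x} \dd \lambda_{\tilde\nu}.
\]
Rewriting the right-hand side via $\dprn{f(x,\frarg), \nu_x} = \dprn{f(x, \frarg + \Ecal v(x)), \tilde\nu_x}$, this is exactly what $Ew_j \toY \tilde\nu$ gives when applied to the shifted integrand $g(x,A) := f(x, A + \Ecal v(x))$, whose strong recession function is $g^\infty = f^\infty$. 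The main obstacle here is that $g$ is only Carath\'eodory in $(x,A)$ rather than jointly continuous, because $\Ecal v \in \Lrm^1$ need not admit a continuous representative, whereas the class $\Ebf(\Omega;\Rdds)$ against which YM convergence is originally tested consists of jointly continuous integrands. I will address this by a Scorza-Dragoni/Lusin approximation: choose continuous $V_n \to \Ecal v$ in $\Lrm^1$, apply $Ew_j \toY \tilde\nu$ to the jointly continuous integrand $g_n(x, A) := f(x, A + V_n(x)) \in \Ebf(\Omega;\Rdds)$, and pass to the limit $n \to \infty$ using uniform continuity of $f$ on $\cl{\Omega} \times K$ for compact $K \subset \Rdds$ combined with a truncation argument on the set $\{\abs{\Ecal w_j + \Ecal v} > R\}$ controlled by the linear growth of $f$ and the uniform $\Lrm^1$-bound on $(\Ecal w_j)$.
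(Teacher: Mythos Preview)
Your approach is essentially identical to the paper's: you define the same residual Young measure (the paper calls it $\mu$, you call it $\tilde\nu$), verify the regular Jensen inequality for it via the same shift $h \mapsto h(\frarg - \Ecal v(x))$ and the observations $\tilde h^\# = h^\#$ and $\frac{\di\lambda_{\tilde\nu}}{\di\Lcal^d} = \frac{\di\lambda_\nu}{\di\Lcal^d}$, and then invoke Theorem~\ref{thm:BDY_charact} and Lemma~\ref{lem:good_genseq}. The only difference is in the last step: where the paper simply cites the Shift Lemma (Lemma~\ref{lem:shift}, taken from~\cite{KristensenRindler10YM}) to conclude $Ew_j + Ev \toY \nu$, you instead sketch a direct argument via Lusin approximation of $\Ecal v$ to handle the Carath\'eodory integrand $g(x,A) = f(x,A+\Ecal v(x))$, which is precisely the content of that lemma's proof.
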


To explain this theorem, we state the following adaptation of Proposition~6 in~\cite{KristensenRindler10YM} on shifts of Young measures (the proof is the same):

\begin{lemma}[Shifts] \label{lem:shift}
Let $(u_j)$ be a bounded sequence in $\BD(\Omega)$ with $E^s u_j = 0$ and assume that $\Ecal u_j \toY \nu \in \BDY(\Omega)$. If $v \in \BD(\Omega)$, then $Eu_j + Ev \toY \mu$, where
\begin{enumerate}[(i)]
\item $\mu_x = \nu_x \conv \delta_{\Ecal v(x)}$ for $\Lcal^d$-a.e.\ $x \in \Omega$,
that is,
\[\qquad
  \dprb{h,\mu_x} = \dprb{h(\frarg + \Ecal v(x)),\nu_x}, \qquad h \in \Crm_c(\Rdds);
\]
\item $\lambda_\mu$, $(\mu_x^\infty)_x$ are such that
\[\qquad
  \dprb{f^\infty(x,\frarg),\mu_x^\infty} \, \lambda_\mu = \dprb{f^\infty(x,\frarg),\nu_x^\infty}
    \, \lambda_\nu + f^\infty \biggl( x, \frac{\di E^s v}{\di \abs{E^s v}}(x) \biggr) \, \abs{E^s v}
\]
for all $f^\infty \in \Crm(\cl{\Omega} \times \partial \Bbb^{d \times d}_\sym)$. In particular,
\[\qquad
  \lambda_\mu = \lambda_\nu + \abs{E^s v}.
\]
\end{enumerate}
\end{lemma}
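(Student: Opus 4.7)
The defining property $Eu_j + Ev \toY \mu$ amounts, for every $f \in \Ebf(\Omega;\Rdds)$, to the weak* convergence
\[
  f(x, \Ecal u_j + \Ecal v)\,\Lcal^d + f^\infty\!\left(x, \frac{\di E^s v}{\di \abs{E^s v}}(x)\right)\abs{E^s v} \toweakstar \dprb{f,\mu_x}\Lcal^d + \dprb{f^\infty,\mu_x^\infty}\lambda_\mu
\]
in $\Mcal(\cl{\Omega})$, where I have used $E^s u_j = 0$ to identify the singular part $(Eu_j + Ev)^s = E^s v$, which is independent of $j$. Since this singular term is already in the form of the claimed $E^s v$-contribution to (ii), the task reduces to identifying the weak* limit of the absolutely continuous term and recognising it as $\dprb{f(x,\frarg + \Ecal v(x)), \nu_x}\Lcal^d + \dprb{f^\infty(x,\frarg), \nu_x^\infty}\lambda_\nu$.

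The central step is a \emph{shift formula}: for every $f \in \Ebf(\Omega;\Rdds)$ and $\varphi \in \Crm(\cl{\Omega})$,
\[
  \lim_{j\to\infty}\int_\Omega \varphi(x)\, f(x, \Ecal u_j + \Ecal v)\,dx = \int_\Omega \varphi \dprb{f(x,\frarg + \Ecal v(x)), \nu_x}\,dx + \int_{\cl{\Omega}} \varphi \dprb{f^\infty, \nu_x^\infty}\,d\lambda_\nu.
\]
I would prove this first for a step shift $V = \sum_{k=1}^N A_k \ONE_{E_k}$ with disjoint Borel $E_k$ satisfying $\lambda_\nu(\partial E_k) = 0$: on each $E_k$ the translated integrand $g_k(x,A) := f(x, A + A_k)$ still lies in $\Ebf(E_k;\Rdds)$ and, by direct computation of the defining limit, has unchanged recession function $g_k^\infty = f^\infty$ (the additive constant $A_k$ is subleading at infinity). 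Applying $\Ecal u_j \toY \nu$ piecewise against $\varphi\, g_k$ and summing yields the shift formula for $V$. Next, I approximate $\Ecal v$ in $L^1(\Omega;\Rdds)$ by step functions $V_n \to \Ecal v$ whose partition boundaries are $\lambda_\nu$-null.

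To interchange the limits $j\to\infty$ and $n\to\infty$, I combine the linear growth $\abs{f(x,A)} \le C(1+\abs{A})$, uniform continuity of $f$ on bounded subsets of $A$, and the equi-integrability of $\{\abs{\Ecal u_j}\}_j$ supplied by Lemma~\ref{lem:compact}; a standard splitting of $\Omega$ into regions of large and bounded $\abs{\Ecal u_j}$ yields
\[
  \sup_j\, \absBB{\int_\Omega \varphi\,\bigl[f(x, \Ecal u_j + V_n) - f(x, \Ecal u_j + \Ecal v)\bigr]\,dx} \longrightarrow 0 \quad\text{as } n \to \infty,
\]
while the right-hand side of the shift formula converges under $V_n \to \Ecal v$ by dominated convergence, using continuity of $A \mapsto \dprb{f(x,\frarg + A), \nu_x}$ and the finite first moment of $\nu_x$. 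Rewriting $\dprb{f(x,\frarg + \Ecal v(x)), \nu_x} = \dprb{f(x,\frarg), \nu_x \conv \delta_{\Ecal v(x)}}$ identifies the oscillation measure in (i); reassembling the two singular contributions (the $\nu$-concentration from the shift formula and the deterministic $E^s v$-term) yields (ii), and testing with $f^\infty \equiv 1$ gives in particular $\lambda_\mu = \lambda_\nu + \abs{E^s v}$.

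The main obstacle is the uniform-in-$j$ passage to the limit $n \to \infty$ in the approximation step: because $f$ has genuine linear growth (not $L^p$ growth for $p>1$), naive dominated convergence is inadequate, and one must really exploit the $L^1$-equi-integrability of $\{\Ecal u_j\}_j$. This is the quantitative form of the intuition that a rigid $L^1$-shift by $\Ecal v$ perturbs the oscillation pattern of the $\Ecal u_j$ by a fixed amount but cannot manufacture new concentrations, so that all concentration effects in the limit come either from the pre-existing $\nu$-concentrations or from the deterministic singular measure $E^s v$.
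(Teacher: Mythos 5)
Your framing is structurally right: you correctly peel off the singular part $E^s v$ (which is $j$-independent because $E^s u_j = 0$), reduce to a shift formula for the absolutely continuous densities, prove it first for step shifts $V_n$ (with the correct observation that $g_k^\infty = f^\infty$, the additive constant being subleading), and then approximate $\Ecal v$ in $L^1$. The gap lies in the passage to the limit $n \to \infty$ uniformly in $j$. You appeal to \enquote{equi-integrability of $\{|\Ecal u_j|\}_j$ supplied by Lemma~\ref{lem:compact}}, but Lemma~\ref{lem:compact} is a weak* compactness criterion for the Young measures themselves and gives no equi-integrability of the generating densities $\Ecal u_j$. Worse, equi-integrability is generically \emph{false} here: a BD-Young measure $\nu$ may carry a nontrivial concentration measure $\lambda_\nu$, and $\lambda_\nu \neq 0$ is precisely the signature of a non-equi-integrable generating sequence. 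The hypothesis $E^s u_j = 0$ only rules out instantaneous singular parts, not concentration in the limit. Consequently your claimed uniform bound $\sup_j \bigl| \int_\Omega \varphi\,[f(\cdot,\Ecal u_j + V_n) - f(\cdot,\Ecal u_j + \Ecal v)]\,\di x \bigr| \to 0$ fails as argued: the contribution from $\{|\Ecal u_j| > M\}$ cannot be driven to zero uniformly in $j$ by choosing $M$ large.

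The repair is the standard one used in the cited source (Proposition~6 of~\cite{KristensenRindler10YM}): test only against the countable Lipschitz family $\{\phi_\ell \otimes h_\ell\}$ of Lemma~\ref{lem:dense}, which determines Young measure convergence. For $h = h_\ell$ Lipschitz with constant $L$,
\[
  \absBB{\int_\Omega \phi_\ell\,\bigl[h(\Ecal u_j + V_n) - h(\Ecal u_j + \Ecal v)\bigr]\,\di x}
  \leq \norm{\phi_\ell}_\infty\, L\, \norm{V_n - \Ecal v}_{\Lrm^1} \;\longrightarrow\; 0
\]
uniformly in $j$, with no equi-integrability required; the mere $\Lrm^1$-boundedness of $\{\Ecal u_j\}$ is not even used at this step. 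This replaces your uniform estimate entirely and restores the interchange of limits. With that substitution, your step-shift formula, the identification of the singular contributions, and the reassembly into statements~(i) and~(ii) of the lemma all go through.
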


However, this lemma can only be used to \emph{add} concentrations, never to \emph{remove} them. Theorem~\ref{thm:split}, however, shows that the removal of concentrations is still possible if $Ev$ is contained as an \enquote{atomic part} in $\nu$.

\begin{proof}[Proof of Theorem~\ref{thm:split}]
From~\eqref{eq:atomic} we have that for some Borel-measurable function $b \colon \Omega \to [0,1]$ it holds that
\begin{equation} \label{eq:atomic_consequences}
  \abs{E^s v} = b \lambda_\nu  \qquad\text{and}\qquad
  \nu_x^\infty \geq b(x) \delta_{\frac{\di E^s v}{\di \abs{E^s v}}(x)}.
\end{equation}
We define $\mu \in \Ybf(\Omega;\Rdds)$ for $h \in \Crm_c(\Rdds)$ through
\begin{align*}
  \dprb{h, \mu_x} &:= \dprb{f(\frarg - \Ecal v(x)), \nu_x}  \qquad \text{for $\Lcal^d$-a.e.\ $x \in \Omega$,}\\
  \mu^\infty_x    &:= \begin{cases}
  \frac{1}{1-b(x)} \Bigl( \nu^\infty_x - b(x) \delta_{\frac{\di E^s v}{\di \abs{E^s v}}(x)} \Bigr)  &\text{for $\lambda_\nu$-a.e.\ $x \in \Omega$ if $b(x) < 1$,}\\
  \nu_x^\infty = \delta_{\frac{\di E^s v}{\di \abs{E^s v}}(x)}  &\text{for $\abs{E^s v}$-a.e.\ $x \in \Omega$ if $b(x) = 1$,}
  \end{cases}\\
  \lambda_\mu     &:= \lambda_\nu(\di x) - \abs{E^s v} = (1-b(x)) \lambda_\nu(\di x).
\end{align*}
This is indeed a Young measure in $\Ybf(\Omega;\Rdds)$ by~\eqref{eq:atomic_consequences}. If $h \in \Crm(\Rdds)$ is symmetric-quasiconvex with linear growth, then define for $\Lcal^d$-almost every $x \in \Omega$ the shifted function
\[
  \tilde{h}(A) := h(A - \Ecal v(x)).
\]
Also $\tilde{h}$ is symmetric-quasiconvex with linear growth and we may estimate using the Jensen-inequality for the bulk part,~\eqref{eq:Jensen} for $\nu$, to get
\begin{align*}
  h \biggl( \dprb{\id,\mu_x} + \dprb{\id,\mu^\infty_x} \frac{\di \lambda_\mu}{\di \Lcal^d}(x) \biggr)
  &= \tilde{h} \biggl( \dprb{\id,\nu_x} + \dprb{\id,\nu^\infty_x} \frac{\di \lambda_\nu}{\di \Lcal^d}(x) \biggr) \\
  &\leq \dprb{\tilde{h},\nu_x} + \dprb{\tilde{h}^\#,\nu^\infty_x} \frac{\di \lambda_\nu}{\di \Lcal^d}(x)  \\
  &= \dprb{h,\mu_x} + \dprb{h^\#,\mu^\infty_x} \frac{\di \lambda_\mu}{\di \Lcal^d}(x)
\end{align*}
because $\tilde{h}^\# = h^\#$ and $\mu^\infty_x = \nu^\infty_x$, $\frac{\di \lambda_\mu}{\di \Lcal^d} = \frac{\di \lambda_\nu}{\di \Lcal^d}$ $\Lcal^d$-almost everywhere. Then, our main characterization result, Theorem~\ref{thm:BDY_charact}, applies and we get that $\mu \in \BDY(\Omega)$. Hence, by Lemma~\ref{lem:good_genseq}, there exists a sequence $(w_j) \subset \BD(\Omega) \cap \Crm^\infty(\Omega;\R^d)$ with $Ew_j \toY \mu$. It can be checked easily via the preceding Lemma~\ref{lem:shift} that $Ew_j + Ev$ generates $\nu$.
\end{proof}

%


\providecommand{\bysame}{\leavevmode\hbox to3em{\hrulefill}\thinspace}
\providecommand{\MR}{\relax\ifhmode\unskip\space\fi MR }
\providecommand{\MRhref}[2]{%
  \href{http://www.ams.org/mathscinet-getitem?mr=#1}{#2}
}
\providecommand{\href}[2]{#2}

\end{document}